\documentclass[10pt]{amsart}
\textwidth 15.5cm
\textheight 21.6cm
\oddsidemargin 0.85cm
\evensidemargin 0.37cm

\usepackage{amsmath,amssymb,url}
\usepackage{enumitem} 
\usepackage{kantlipsum}
\usepackage{microtype}
\usepackage{bm}
\usepackage{tikz}
\usepackage{mathrsfs}
\usepackage{mathabx}
\usepackage{comment}
\usepackage{color}
\usepackage[all]{xy}
\usepackage{relsize}

\numberwithin{equation}{section}

\theoremstyle{plain}
\newtheorem{theorem}{Theorem}[section]
\newtheorem{lemma}[theorem]{Lemma}
\newtheorem{proposition}[theorem]{Proposition}
\newtheorem{corollary}[theorem]{Corollary}

\theoremstyle{definition}
\newtheorem{definition}[theorem]{Definition}
\newtheorem{example}[theorem]{Example}

\newtheorem{remark}[theorem]{Remark}

\newcommand{\lltimes}{\lltimes\!}
\newcommand{\e}{\mathsf e}
\newcommand{\sq}{\mathsmaller{[]}}
\newcommand{\pref}{\!\mathsmaller{<}}


\makeatletter
\@namedef{subjclassname@2020}{\textup{2020} Mathematics Subject Classification}
\makeatother
\subjclass[2020]{Primary: 08A99, 08B05, 08C05; Secondary: 08A65} 
\begin{document}

\title[From Clones to Cm-Monoids]{From Clones to Cm-Monoids} 
\author[A. Bucciarelli, P.-L. Curien and A. Salibra]{Antonio Bucciarelli, Pierre-Louis Curien and Antonino Salibra}
\address{Institut de Recherche en Informatique Fondamentale\\
CNRS and Universit\'e Paris Cit\'e\\ 8 Place Aur\'elie Nemours, 75205 Paris Cedex 13, France}

\begin{abstract}
  Clones of functions play a foundational role in both universal algebra and theoretical computer science. In this work, we introduce the variety of \emph{clone merge monoids} (cm-monoids), a unifying one-sorted algebraic framework that integrates abstract clones, clone algebras and Neumann's abstract $\aleph_0$-clones. Cm-monoids combine a monoid structure with a new algebraic structure called merge algebra, capturing essential properties of infinite sequences of
  functions.
We establish a categorical equivalence between clone algebras and 
finitely ranked cm-monoids.
This equivalence yields by restriction  a three-fold equivalence between abstract clones, finite-dimensional clone algebras, and finite-dimensional, finitely ranked cm-monoids,  and is itself obtained by restriction  from  a categorical equivalence  between \emph{partial infinitary clone algebras} (which generalise clone algebras and abstract $\aleph_0$-clones) and extensional cm-monoids.
\end{abstract}

\keywords{Clone, Clone Algebra, Merge Algebra, Clone Merge Monoid}

\maketitle

\section*{Dedicated to Robert Goldblatt}\label{sec:intro}

The notion of Lambda Abstraction Algebra ($\mathsf{LAA}$), introduced by Pigozzi and Salibra in \cite{PS93,PS95}, provides an algebraic framework for modeling the essential features of the untyped lambda calculus. Unlike traditional approaches based on combinatory logic (see \cite{Bare84}), $\mathsf{LAA}$s offer a purely equational setting, where lambda abstraction is treated as a primitive operation governed by suitable identities. 

In \cite{GS99}  Goldblatt and Salibra  established a general representation theorem, namely that every $\mathsf{LAA}$ is isomorphic to an algebra of functions arising from the coordination of a model of the lambda calculus. This theorem provides a foundational result that enabled the resolution of several significant open problems in lambda calculus (e.g. see \cite{MS10}).

As shown in this paper, in each $\mathsf{LAA}$ $\mathbf A$ there exist term-operations making $\mathbf A$ into a clone algebra.
These algebras  have been recently introduced by Bucciarelli and  Salibra \cite{BS22} in order to provide a one-sorted algebraic account of clones of functions,
which  are sets of finitary functions that include all projections and are closed under composition (see \cite{L06,SZ86,T93}).
The main result of \cite{BS22} is a representation theorem stating that every clone algebra is isomorphic to a clone of functions from $A^\omega$ to $A$ for a suitable set $A$,  much in the spirit of the aforementioned representation theorem for $\mathsf{LAA}$.

\bigskip

Clones of functions play a significant role in universal algebra, as the set of all term operations of an algebra always forms a clone, and, in fact, every clone is of this form.
Therefore, comparing clones of algebras is much more appropriate than comparing their basic operations for the purpose of classifying algebras based on different behaviours. 
In addition to their relevance in universal algebra, clones also play an important role in the study of first-order structures. The polymorphism clone of a first-order structure, consisting of  all finitary functions that preserve the structure, holds valuable information and serves as a powerful analytical tool.
In particular, clones  have significant applications in theoretical computer science, especially in the context of constraint satisfaction problems (CSPs) (e.g. see \cite{BKW17,B21}). In a CSP, a specific structure (the template)  is fixed, and the problem involves deciding whether a given conjunction of atomic formulas over the signature of the template is satisfiable in that structure. Jeavons' groundbreaking discovery \cite{J98} revealed that the complexity of a CSP for a finite structure is entirely determined by the polymorphism clone of that structure. 

\medskip

Abstract clones were introduced by Philip Hall (see \cite{C81,evans82,T93} and \cite[Chapter 10]{FMMT22a}) to provide an axiomatic generalisation of clones, analogous to the way groups generalise permutation groups. An abstract clone is a many-sorted algebra $\mathbf A = (A_n, q^n_k, \e^n_k)$, where the sorts are indexed by the natural numbers, representing the arities of finitary functions. Projections are abstracted into an infinite system of nullary operations $\e^n_k \in A_n$ ($k < n$), while the many-sorted composition of finitary functions is captured by a family of operations $q^n_k: A_k \times (A_n)^k \to A_n$. Every abstract clone is isomorphic to a clone of functions.
Abstract clones also have a natural connection to category theory. In a Lawvere theory \cite{law63}, arities are treated as objects, and morphisms from $m$ to $n$ represent abstract $n$-tuples of functions of arity $m$. A classical result states that the categories of abstract clones and Lawvere theories are equivalent.
 Since Lawvere theories are, in turn, equivalent to finitary monads on the category of sets, it follows that the category of abstract clones is also equivalent to the category of finitary monads.

\medskip

While clones are inherently many-sorted structures, efforts have been made to encode them into one-sorted algebras to take full advantage of the machinery of universal algebra. The basic idea is to switch from $n$-ary functions for varying $n$ to $\omega$-ary ones.
This has led to the development of abstract $\aleph_0$-clones ($\aleph_0$-$\mathsf{AC}$s) \cite{neu70} and clone algebras ($\mathsf{CA}$s) \cite{BS22}, where projections are represented by a countably infinite system of nullary operations $\e_0, \e_1, \dots, \e_n, \dots$, while functional composition is modeled in the case of $\mathsf{CA}$s, by a family of operators $q_n$ of arity $n+1$ (for $n \geq 0$), and in $\aleph_0$-$\mathsf{AC}$s, by a single operator $q$ of arity $\omega$.

The axioms of $\mathsf{CA}$s and $\aleph_0$-$\mathsf{AC}$s characterize, up to isomorphism, algebras of functions, which are called functional $\mathsf{CA}$s  and functional $\aleph_0$-$\mathsf{AC}$s, respectively. The elements of a functional $\mathsf{CA}$ or $\aleph_0$-$\mathsf{AC}$ over a value domain $A$ are functions $\varphi: A^\omega \to A$ of arity $\omega$, referred to as $\omega$-ary functions. In this setting, the nullary operators $\e_i: A^\omega \to A$ are the projections, while the $q_n,q$  operators are given by finitary, infinitary composition of $\omega$-ary functions, respectively:
$$\begin{array}{l}
    q_n(\psi,\varphi_1,\ldots,\varphi_n)(s)=\psi(\varphi_1(s),\ldots,\varphi_n(s),s_{n+1},\ldots)\\
    q(\psi,\varphi_1,\ldots,\varphi_n,\varphi_{n+1},\ldots)(s)=\psi(\varphi_1(s),\ldots,\varphi_n(s),\varphi_{n+1}(s),\ldots)
  \end{array}
  $$
for every $s\in A^\omega$. 
The universe of a functional $\mathsf{CA}$  is called an $\omega$-clone, while the universe of a functional $\aleph_0$-$\mathsf{AC}$  is  called an infinitary $\omega$-clone. It is important to note that every infinitary $\omega$-clone is also an $\omega$-clone.
Furthermore, infinitary $\omega$-clones extend the concept of clone of functions: every clone can be encoded into an appropriately constructed infinitary $\omega$-clone.

\medskip

The starting point of this work is the  observation that, given an (infinitary) $\omega$-clone $C$ over $A$,   we can obtain a monoid structure on the set  of the
$\omega$-sequences of elements of $C$ as follows:
$$((\varphi_i)_{i\in\omega}\circ(\psi_j)_{j\in\omega})(s)=(\varphi_i(\psi_0(s),\ldots,\psi_n(s),\ldots))_{i\in\omega}$$
for every $s\in A^\omega$. This is just ordinary composition of functions from $A^\omega$ to $A^\omega$, up to  the identification of any function  $\varphi: A^\omega\to A^\omega$ with  an $\omega$-sequence $\varphi_0,\varphi_1,\ldots,\varphi_n,\ldots$  of functions from $A^\omega$ to $A$.

We introduce the variety of \emph{merge monoids} (abbreviated as m-monoids),  that encompasses abstract clones, clone algebras, and also monoids in the degenerate case. 
An m-monoid $\mathbf M=(M,\cdot, 1, \star_n,\bar\sigma)_{n\geq 0, \sigma\in S_\omega}$ is a one-sorted algebra of type $(2,0,2,1)$, where $(M,\cdot,1)$ forms a monoid,
 $S_\omega$ is the set of finite permutations of $\omega$,
and $(M,\star_n,\bar\sigma)_{n\geq 0, \sigma\in S_\omega}$ constitutes a so-called \emph{merge algebra},  capturing the following structural properties of $\omega$-sequences: $x\star_n y$ is meant to denote the sequence obtained by replacing the first $n$ elements of $y$ by the first $n$ elements of $x$, and $\bar\sigma(x)$ is the $\omega$-sequence obtained by the $\sigma$-permutation of the elements of $x$.
The connection between the monoid structure and the merge  operation $\star_n$ is given by the right distributivity law:
$$(x\star_n y)\cdot z= (x\cdot z)\star_n (y\cdot z).$$
An m-monoid is degenerate when  $x\star_n y=y$ and $\bar\sigma(x)=x$ for every $n$ and $\sigma\in S_\omega$. The category of degenerate m-monoids is equivalent to the category of monoids.

We then explore the interaction between permutations and multiplication in two different ways: 
\begin{itemize}
\item In \emph{clone merge monoids} (\emph{cm-monoids}) we require $\bar\sigma(x\cdot y)=\bar\sigma(x)\cdot y$, inspired by the paradigmatic example of all endofunctions of $A^\omega$, for some set $A$.
  In this case, as suggested above, the 
  multiplication of the monoid is  the composition of functions, 
while the merge algebra structure is obtained by the identification of any function  $\varphi: A^\omega\to A^\omega$ with  an $\omega$-sequence $\varphi_0,\varphi_1,\ldots,\varphi_n,\ldots$  of functions from $A^\omega$ to $A$.
Given an $\omega$-clone $F$, the set of $\omega$-sequences of elements of $F$ is the universe of a cm-monoid of this form.
Remarkably, we shall prove   that all non-degenerate cm-monoids are necessarily noncommutative. 
\item In \emph{arithmetical merge monoids} (\emph{am-monoids}) we instead require $\bar\sigma(x\cdot y)=\bar\sigma(x)\cdot \bar\sigma(y)$. This is motivated by the following  example from arithmetics:
the multiplicative monoid of positive natural numbers, where the merge structure arises from the prime factorisation of a number.
\end{itemize}

As explained in the rest of this introduction, the main results of the paper concern cm-monoids.
In summary, the category of cm-monoids fits naturally in the following picture:

\medspace

\begin{center}
\begin{tabular}{|c|ccc|}
\hline
&\it{Algebraic}&&\it{Categorical}\\
\hline
\it{Many-sorted}&Abstract clones& $\cong$ &Lawvere theories\\
&\raisebox{0.225cm}{\rotatebox{-90}{$\subset$}}&&\\
\it{One-sorted}&Clone algebras, $\aleph_0$-$\mathsf{AC}$s&$\subset$&cm-Monoids\\
\hline
\end{tabular}
\end{center}

\medspace

In this table, we use the term ``algebraic'' to describe structures that abstract 
 finitary  ($A^n\to A$) or infinitary ($A^\omega\to A$) functions. In contrast, 
 the categorical column abstracts structures based on finitary
   $(A^n\to A^m)$ or infinitary $(A^\omega\to A^\omega)$ functions. 
   Additionally, the inclusion symbols represent categorical embeddings, while the symbol $\cong$ signifies categorical equivalence. 

\medskip

We now outline the main contributions of this work, as well as the plan of the rest of the paper. 
After covering some preliminaries, we revisit the concepts of clone algebras and $\aleph_0$-$\mathsf{AC}$s in Sections \ref{sec:pre} and \ref{sec:clonealg}. There, we establish a categorical adjunction between the category of abstract clones and the category of all clone algebras, which restricts to an equivalence with finite-dimensional clone algebras. In Section \ref{sec:merge}, we  introduce the new notion of \emph{merge algebra}, along with essential concepts such as rank, coordinate and extensionality. Our central object of study emerges in Section \ref{sec:var}, where we study the main properties of  m-monoids, cm-monoids and am-monoids.
In Section \ref{sec:dim}  we introduce the concept of finite dimensionality for m-monoids, which enables us to recast abstract clones in the framework of cm-monoids.

The core results of this work are presented in Sections \ref{sec:ca1} and \ref{sec:ca2}.
In Section \ref{sec:ca1}, we establish a categorical adjunction between clone algebras and cm-monoids, which restricts to an equivalence with finitely ranked cm-monoids.  This equivalence narrows  to one between finite-dimensional clone algebras  and finite-dimensional, finitely ranked cm-monoids.
 By combining these results with the findings from Section \ref{sec:clonealg}, we deduce an equivalence between abstract clones and finite-dimensional, finitely ranked cm-monoids.
In Section \ref{sec:ca2}, we introduce \emph{partial infinitary clone algebras} ($\mathsf{PICA}$s), a unifying framework that generalises both clone algebras and $\aleph_0$-$\mathsf{AC}$s. We prove that the category of $\mathsf{PICA}$s is equivalent to the category of extensional cm-monoids. The table below summarizes these results.

\medskip

\begin{center}
\begin{tabular}{|ccc|}
  \hline
  &&\\
abstract clones& $\cong$ & finite-dimensional, finitely ranked cm-monoid\\
\raisebox{0.225cm}{\rotatebox{-90}{$\subset$}}&&\raisebox{0.225cm}{\rotatebox{-90}{$\subset$}}\\
  clone algebras&$\cong$&finitely ranked cm-monoids\\
  \raisebox{0.225cm}{\rotatebox{-90}{$\subset$}}&&\raisebox{0.225cm}{\rotatebox{-90}{$\subset$}}\\
 partial infinitary clone algebras&$\cong$&extensional cm-monoids\\
&&\\
\hline
\end{tabular}
\end{center}

\medskip

In future work, we will develop the theory of abstract polymorphisms and invariant relations through the notion of \emph{$\mathbf M$-module}, where $\mathbf M$ is a cm-monoid. An $\mathbf M$-module is a merge algebra $\mathbf B$ equipped with an action of $\mathbf M$ on $B$. 
A relevant example of an $\mathbf M$-module, where $\mathbf M$ is a suitable cm-monoid of endofunctions of $A^\omega$, is the following:
for a relation $R\subseteq A^\omega$, we consider the merge algebra $\mathbf B$  of all $\omega\times\omega$-matrices over $A$, whose columns belong to $R$.
A function from $A^\omega$ to $A^\omega$ acts on an $\omega\times\omega$ matrix by row-wise functional application. 
The largest cm-monoid $\mathbf M$ of endofunctions of $A^\omega$  such that $\mathbf B$ is an $\mathbf M$-module defines the set $M$ of all abstract polymorphisms of the relation $R$.
In this way we can recast the classical theory of polymorphisms and invariant relations.

\section{Preliminaries}\label{sec:pre}
The notation and terminology in this paper are pretty standard. For
concepts, notations and results not covered hereafter, the reader is
referred to \cite{BS81,mac87} for universal algebra, to \cite{L06,FMMT22a,SZ86,T93} for the theory of clones, and to \cite{BS22} for clone algebras. 

In this paper, the symbol $\omega$ represents the first infinite ordinal. A finite ordinal $n<\omega$ is understood as the set of elements $\{0, \ldots, n-1\}$.

In the rest of this section, the symbol $A$ will be used to denote an arbitrary set.

\subsection{Traces and $\omega$-sequences}\label{sec:tra}

\begin{enumerate}
\item Given an $\omega$-sequence $s\in A^\omega$ and elements $a_i\in A$, we define $s[a_0,\dots,a_{n-1}]\in A^\omega$ by
  $s[a_0,\dots,a_{n-1}]_i=a_i$ if $0\leq i\leq n-1$;  $s_i$ otherwise.
    
 \item For $a\in A$, we denote by $a^\omega\in A^\omega$ the constant sequence defined by $(a^\omega)_i = a$ for all $i$. 
 
\item If $X\subseteq A^\omega$, then we define $\mathrm{dom}(X)= \{s_i: s\in X, i\in\omega\}$.

\item The relation $\equiv$ on $A^\omega$, defined by
$s\equiv r\ \Leftrightarrow\ |\{i: s_i\neq r_i\}|< \omega$,
is an equivalence relation.  The equivalence class of an element $s\in A^\omega$  will be denoted by $[s]_\equiv$.

\item A \emph{trace on $A$} is a subset $X$  of $A^\omega$ closed under the relation $\equiv$:
$\ s\in X\ \text{and}\ r\equiv s \Rightarrow r\in X$.

\item  A set $X$ of $\omega$-sequences is called a \emph{trace} if $X$ is a trace on the set $\mathrm{dom}(X)$.

\end{enumerate}

\subsection{Permutations}

\begin{enumerate}
\item A permutation $\sigma$ of $\omega$ is \emph{finite} if $\mathrm{dom}(\sigma)=\{ n: \sigma(n)\neq n\}$ is a finite set. We often write $\sigma_n$  for $\sigma(n)$.
Note that $\mathrm{cod}(\sigma)=\sigma(\mathrm{dom}(\sigma))=\mathrm{dom}(\sigma)$. 
We denote by $\iota$  the identity permutation and by $\tau^n_k$    the transposition such that $\tau^n_k(k)=n$, $\tau^n_k(n)=k$ and  $\tau^n_k(i)=i$ for every $i\neq k,n$.

\item The set $S_\omega$ of all finite permutations of $\omega$ is generated by all transpositions. The transpositions $\tau^k_0$ ($k\in\omega$) also generate $S_\omega$.
 A permutation $\sigma$ of $\omega$ is a
permutation of $Y$ if $\mathrm{dom}(\sigma)\subseteq Y\subseteq \omega$.

\item Two permutations $\sigma,\rho\in S_\omega$ are disjoint if 
$\mathrm{dom}(\sigma)\cap \mathrm{dom}(\rho)=\emptyset$. If $\sigma,\rho\in S_\omega$ are disjoint, then $\sigma\circ\rho = \rho\circ\sigma$.

\item If $\sigma$ is a permutation, then $\sigma^A: A^\omega\to A^\omega$ is the map defined by
 $\sigma^A(s)=(s_{\sigma_i}: i\in \omega)$. 
\end{enumerate}

\subsection{Finitary functions, $\omega$-ary functions and clones} \label{sec:operations}
\begin{enumerate}
\item A \emph{finitary function on $A$} is a function $f:A^n\to A$ for some $n\in\omega$. The set of all finitary functions on $A$ is denoted by $O_A$.  Given $F\subseteq O_A$, we define $F^{(n)}=\{f\in F\mid f:A^n\to A \}$.

 \item An  \emph{$\omega$-ary function on $A$} is a function $\varphi:A^\omega\to A$. The set of all $\omega$-ary functions on $A$ is denoted by $O_A^{(\omega)}$. The identity endofunction of $A^\omega$  is denoted by $\mathrm{Id}$.

\item For a finitary function $f: A^n\to A$,  the \emph{top extension $f^\top$ of $f$} is the $\omega$-ary function defined by $f^\top(s)=f(s_0,\dots,s_{n-1})$ for every $s\in A^\omega$. For $F\subseteq O_A$,  we write $F^\top=\{f^\top : f\in F\}$.

\item The $\omega$-ary function $\e^A_n:A^\omega\to A$ is the projection defined by $\e^A_n(s)=s_n$ for every $s\in A^\omega$.

\item Given a function $f:A\to B$, the function $f^\omega:A^\omega\to B^\omega$ is defined as follows: $f^\omega(s)=(f(s_i): i\in\omega)$ for every $s\in A^\omega$.

\item   A \emph{clone of functions} on $A$ is a subset $C$ of $O_A$ containing all projections
$\e^{n}_i:A^n\to A$ ($0\leq i \leq n-1$) and closed under composition. 
 Given two clones $C$ and $D$, a \emph{clone homomorphism} is a mapping $F : C\to D$ preserving the arity of functions, mapping the
projections in $C$ to the corresponding projections in $D$ and preserving
composition.
\end{enumerate}

\subsection{Abstract clones}\label{sec:attempt} 

We recall from  \cite{T93} and \cite[p.\,239]{evans82} that an \emph{abstract clone}  is a many-sorted algebra composed of  disjoint sets $B_n$ (for $n\geq 0$),
distinguished elements $\e_i^{n}\in B_n$ (for $n\geq 1$ and $0\leq i\leq n-1$), and
a family of operations $q_n^k: B_n\times (B_k)^n \to B_k$ (for all $k$ and $n$)
such that
\begin{enumerate}
\item[(A1)] $q_n^k(q_m^n(x,y_0,\dots,y_{m-1}),\mathbf z)= q_m^k(x,q_n^k(y_0,\mathbf z),\dots,q_n^k(y_{m-1},\mathbf z))$, where $x$ is a variable of sort $m$, $y_0,\dots,y_{m-1}$ of sort $n$ and $\mathbf z$ is a sequence of variables of sort $k$;
\item[(A2)] $q^n_n(x,\e_0^{n},\dots,\e_{n-1}^{n})=x$, where $x$ is a variable of sort $n$;
\item[(A3)] $q_n^k(\e_i^{n},y_0,\dots,y_{n-1})=y_i$, where $y_0,\dots,y_{n-1}$ are variables of sort $k$.
\end{enumerate}
We call the equations in (A1), (A2), (A3) associativity, right unit and left unit laws, respectively.
A morphism of abstract clones is a morphism of many-sorted algebras, thus given by a collection of functions commuting with the operations
$\e_i^n$ and $q_n^k$.

\smallskip
The primary example of an abstract clone is a clone of functions,
with $\e_i^n$ as above (cf. Section \ref{sec:operations}) and
$$q_n^k(f,g_0,\dots,g_{n-1})(x_0,\ldots,x_{k-1})=f(g_0(x_0,\ldots,x_{k-1}),\ldots,g_{n-1}(x_0,\ldots,x_{k-1})).
$$

\subsection{Abstract $\aleph_0$-clones \cite{neu70,T93}}\label{sec:neu}
 An \emph{abstract $\aleph_0$-clone} ($\aleph_0$-$\mathsf{AC}$) is an infinitary algebra $\mathbf B =(B,q^\mathbf B,\e^\mathbf B_i)$, where the $\e^\mathbf B_i$ ($1\leq i<\omega$) are nullary operators and $q^\mathbf B$ is an infinitary operation satisfying:
 \begin{enumerate}
 \item[(N1)] $q(\e_i,x_1,\dots,x_n,\dots)=x_i$;
 \item[(N2)]  
 $q(x,\e_1,\dots,\e_n,\dots)=x$;
 \item[(N3)]
 $q(q(x,\mathbf  y),\mathbf z)= q(x,q(y_1,\mathbf z),\dots,q(y_n,\mathbf z),\dots)$,
 where  $\mathbf y$ and $\mathbf  z$ are countable infinite sequences of variables.  
 \end{enumerate}

 A \emph{functional} $\aleph_0$-$\mathsf{AC}$  with value domain $A$ is an algebra
 $\mathbf F=(F,q^\mathbf F,\e_i^\mathbf F)$, where $F\subseteq  O_A^{(\omega)}$, $\e^\mathbf F_i(s)=s_i$, and for every $\varphi,\psi_i\in F$ and every $s\in A^\omega$,
 $q^\mathbf F(\varphi,\psi_0,\dots,\psi_n,\dots)(s)=\varphi(\psi_0(s),\dots,\psi_n(s),\dots)$.

 Neumann shows in \cite{neu70} that every abstract $\aleph_0$-$\mathsf{AC}$ is isomorphic to a functional  $\aleph_0$-$\mathsf{AC}$  and that  there is a faithful functor from the category of clones to the category of abstract $\aleph_0$-$\mathsf{AC}$s, but this functor is not full.  The connection between abstract $\aleph_0$-$\mathsf{AC}$s and clone algebras is explained in \cite[Section 4.3]{BS22} (see also Section \ref{sec:ca2}).

\section{Clone algebras}\label{sec:clonealg}

In this section, we recall from \cite{BS22} the definition of a \emph{clone algebra} as a more canonical algebraic account of clones using standard one-sorted algebras, and we study the relationship between clone algebras and abstract clones. In particular, we revisit and refine the analysis of such relationships, adapting and extending previous approaches in light of our framework. Moreover, we show that every lambda abstraction algebra \cite{PS93} is a clone algebra.

The finitary type of  clone algebras contains a countably infinite family of nullary operators $\e_n$ ($n\geq 0$) and, for each $n\geq 0$, an operator $q_n$ of arity $n+1$.  

Throughout this paper, whenever we write
$q_n(x,\mathbf y)$, it is implicitly assumed that  $\mathbf y=y_0,\dots,y_{n-1}$ is a sequence of length $n$.

 \begin{definition} \label{def:clonealg} 
 A \emph{clone algebra}\footnote{We have made a small notational change with respect to the definition given in \cite{BS22}, where the 0-ary operations $\e_i$ are indexed starting from 1 rather than 0. The new notation matches conventions on infinite sequences ranging over  $\omega$ rather than $\omega\setminus\{0\}$.} ($\mathsf{CA}$, for short)  is an algebra 
 $\mathbf C = (C, q^\mathbf C_n,\e^\mathbf C_n)_{n\geq 0}$, where $C$ is a set, $\e^\mathbf C_n$ is an element of $C$, and $q^\mathbf C_n$ is an $(n+1)$-ary  operation satisfying the following identities: 
\begin{enumerate}
\item[(C1)] $q_n(\e_i,x_0,\dots,x_{n-1})=x_i$ $(0\leq i\leq n-1)$;
\item[(C2)] $q_n(\e_j,x_0,\dots,x_{n-1})=\e_j$ $(j\geq n)$;
\item[(C3)] $q_n(x,\e_0,\dots,\e_{n-1})=x$ $(n\geq 0)$;
\item[(C4)] $q_n(x,  \mathbf{y})= q_k(x, \mathbf{y},\e_{n},\dots,\e_{k-1})$ ($k> n$);
 \item[(C5)] $q_n(q_n(x, y_0,\dots,y_{n-1}), \mathbf{z})=q_n(x,q_n(y_0, \mathbf{z}),\dots,q_n(y_{n-1},\mathbf{z}))$.
\end{enumerate}
\end{definition}

For convenience, we will use the simplified notation $\mathbf C = (C, q^\mathbf C_n,\e^\mathbf C_n)$,  instead of explicitly writing out the indexing $\mathbf C = (C, q^\mathbf C_n,\e^\mathbf C_n)_{n\geq 0}$.

An $\omega$-clone on a set $A$ is a subset $C\subseteq O_A^{(\omega)}$ that satisfies the following properties: 
  \begin{itemize}
\item $C$ contains all projections $\e_n^A:A^\omega\to A$;
\item $C$ is closed under the family of operations $q_n^A$ of arity $n+1$, defined by:
  $$q^A_n(\varphi,\psi_0,\dots,\psi_{n-1})(s)=\varphi(s[\psi_0(s),\dots, \psi_{n-1}(s)]),$$
   for every $s\in A^\omega$ and $\varphi,\psi_0,\dots,\psi_{n-1}\in  C$.
  \end{itemize}
If $C$ is an $\omega$-clone, then the algebra $(C, q_n^A, \e^A_n)_{n\geq 0}$ is called a \emph{functional clone algebra $(\mathsf{FCA})$ with value domain $A$} (see \cite{BS22}).
One of the main results in \cite{BS22} states that every $\mathsf{CA}$
is isomorphic to some $\mathsf{FCA}$.

\begin{example}\label{exa:proj} The algebra $\mathbf P=(\omega, q_n^\mathbf P,\e_n^\mathbf P)$, where  $\e_n^\mathbf P=n$ and $q_n^\mathbf P(i,k_0,\dots,k_{n-1})=k_i$ if $i< n$, else $i$,
is the minimal  clone algebra.
\end{example}

\begin{example}\label{exa:free} Let $\rho$ be a finitary type of algebras, $K$ be a variety of $\rho$-algebras and  $\mathbf F_K=(F_K,\sigma^\mathbf F)_{\sigma\in\rho}$ be the free $K$-algebra over a countable set $I=\{v_0,v_1,\dots\}$ of generators. 
An endomorphism $f$ of $\mathbf{F}_K$ is called  $n$-finite if $f(v_i)=v_i$ for every $i\geq n$. These $n$-finite endomorphisms can be collectively expressed by an $(n+1)$-ary operation $q_n^\mathbf F$ on $F_K$ as follows (see   \cite[Definition 3.2]{mac83} and \cite[Definition 5.2]{BS22}): $q_n^\mathbf F(a,b_0,\dots,b_{n-1})=s(a)$ for every $a,b_0,\dots,b_{n-1}\in F_K$,
where $s$ is the unique  endomorphism
of $\mathbf{F}_K$ mapping the generator $ v_i$ to $b_i$ ($0\leq i\leq n-1$). 
 Then the algebra  $\mathrm{Cl}(\mathbf{F}_K)=(F_K, q_n^\mathbf{F}, \e_n^\mathbf{F})$ is a clone algebra,
where $\e_n^\mathbf{F}= v_n\in I$.
The operation $\sigma^\mathbf F$ (for $\sigma\in\rho_n$) is represented by the equivalence class in $\mathbf F_K$ of the  $\rho$-term $\sigma(v_0,\dots,v_{n-1})$.
\end{example}

\begin{example} \label{exa:topext} Every clone of functions $C$ on a set $A$ determines an $\mathsf{FCA}$, whose universe is the set $C^\top=\{f^\top: f\in C\}$ of top extensions of functions in $C$ (see item (3) in Section \ref{sec:operations}). Roughly speaking,  this corresponds to adding to each function $f \in C$ an infinite number of dummy arguments. The top extension operator commutes with the composition:
$$g(f_0,\dots,f_{n-1})^\top= g^\top(f_0^\top,\dots,f_{n-1}^\top,\e_n^A,\e^A_{n+1},\dots)$$
for every $n$-ary function $g\in C$ and $k$-ary functions $f_0,\dots,f_{n-1}\in C$.
\end{example}

\begin{example}\label{exa:laa}
 In this example, we show that every lambda abstraction algebra ($\mathsf{LAA}$) (see \cite{PS93,PS95}) is a clone algebra. This result is important within the line of research that investigates which varieties of algebras can be interpreted in the lambda calculus. For example, the variety of  groups cannot be interpreted in the lambda calculus,   whereas the varieties of  lattices and of subtractive algebras can  (see \cite[Section 6.2]{BS06} and \cite{Sel03}). Some of the most important open problems in lambda calculus concern the interpretability of suitable varieties of algebras, for instance the order incompleteness problem \cite{Sel03} is equivalent  to the interpretability of $n$-permutability, for some $n\geq 2$. 

Let $\mathbf A$ be an $\mathsf{LAA}$.
We have to distinguish the variables of lambda calculus from the algebraic ones.  The former are written $x,y,\ldots$ and the latter $X,Y,\ldots$. We fix a  linear order on the set of variables of lambda calculus: $x_1,x_2,\ldots,x_n,\ldots$. We recall that in $\mathsf{LAA}$s, we have a binary operator of application and countably many unary operators $\lambda x_i$, for each variable $x_i$ of lambda calculus. As usual, $t_1t_2t_3\ldots t_n$ stands for $(\ldots ((t_1 t_2)t_3)\ldots t_n)$ and $\lambda x_1x_2\dots x_n.t$ for $\lambda x_1(\lambda x_2(\dots(\lambda x_n.t)\dots))$.
For every $n\in\omega$, we define:
 $$q_n(X,Y_1,\dots,Y_n)=(\lambda x_1\dots x_n.X)Y_1\dots Y_n;\qquad \e_n= x_n.$$
By applying \cite[Theorems 13,14]{SA00}
one can verify quite easily that  $(A,q_n,\e_n)_{n\in\omega}$ is a clone algebra.

\end{example}

\subsection{Independence and dimension}
We define the notions of independence and dimension in clone algebras, abstracting the notion of arity of the finitary functions. We follow \cite[Section 3]{BS22}.

\begin{definition} \cite[Definition 3.4]{BS22} An element $a$ of a clone algebra $\mathbf  C$ \emph{is independent of} $\e_{n}$ if 
$q_{n+1}(a,\e_0,\dots,\e_{n-1},\e_{n+1})=a$. If $a$ is not independent of $\e_{n}$, then we say that 
$a$ \emph{is dependent on} $\e_{n}$.
\end{definition}

In \cite[Lemma 3.5]{BS22} it is shown that  $a$ is independent of $\e_n$ iff $q_{n+1}(a,\e_0,\dots,\e_{n-1},b)=a$ for every $b\in C$. We define the \emph{dimension of $a$} as follows:
$$\mathrm{dim}(a)=\begin{cases}0&\text{if $\{k: \text{$a$ depends on $\e_k$}\}=\emptyset$} \\
n+1&\text{if $n=\mathrm{max}\{k: \text{$a$ depends on $\e_k$}\}$} \\\omega&\text{if $\{k: \text{$a$ depends on $\e_k$}\}$ is infinite.} \end{cases}
$$

An element $a\in C$ is \emph{finite-dimensional} if $\mathrm{dim}(a)<\omega$.
We denote by $ C_{\mathrm{fd}}$ the set of all finite-dimensional elements of a clone algebra $\mathbf C$.
 The set $ C_{\mathrm{fd}}$ is a subalgebra of $\mathbf C$.
 We say that $\mathbf C$ is \emph{finite-dimensional} if $C= C_{\mathrm{fd}}$.

 The clone algebras introduced in Examples \ref{exa:proj}, \ref{exa:free} and \ref{exa:topext} are all finite-dimensional.

\subsection{Abstract clones and clone algebras} \label{sec:ac-ca}

We now construct  a categorical equivalence between abstract clones and finite-dimensional clone algebras -- a result first established in \cite[Theorem 3.20]{BS22}. There are three differences and improvements in the present treatment with respect to \cite{BS22}: (1) we show how to construct a clone algebra from an arbitrary abstract clone in such a way that our general construction generalises the  construction of a clone algebra from a clone of functions (see Remark \ref{rem:approx-top}); (2) thanks to this generalisation, rather than using Evans' representation theorem
(\cite[Theorem 2]{evans89}) stating that each abstract clone is isomorphic to one arising from a clone of functions, we obtain a new proof of this result as a direct corollary of our constructions (see Corollary \ref{cor:evans}); (3) the equivalence arises from a ``larger"  adjunction between the category of abstract clones and the category of \emph{all} clone algebras. Since this is a reformulation of a known result, the proofs in this subsection are only outlined.

\medskip
We first recall from \cite{BS22} the construction associating to a clone algebra $\mathbf{C}$ a clone of functions $R_{\mathbf{C}}$, which is a fortiori  an abstract clone. Let $a\in C$ be an element of dimension $\leq k$. We define a function $\tilde a^{(k)}:C^k\rightarrow C$ as follows:
$\tilde a^{(k)}(x_0,\ldots,x_{k-1})=q_k(a,x_0,\ldots,x_{k-1})$,
and we set $(R_{\mathbf C})^{(k)} = \{ \tilde a^{(k)}: a\in C\:\textrm{with dimension}\:\leq k\}$.
The indexed set $R_C$ of functions forms a  clone of functions. In particular, we have
$\tilde a^{(n)}(\tilde b_0^{(k)},\ldots,\tilde b_{n-1}^{(k)})=\tilde c^{(k)}$, where $c=q_n(a,b_0,\ldots,b_{n-1})$.
The following remark will be useful.
\begin{remark} \label{rem:tilde-injective}
Note that, if $\tilde a^{(k)}=\tilde b^{(k)}$, then $a=\tilde a^{(k)}(\e_0,\ldots,\e_{k-1})=\tilde b^{(k)}(\e_0,\ldots,\e_{k-1})=b$.
\end{remark}

\medskip
We next go in the converse direction, from abstract clones to clone algebras. Let $\mathbf{B}$ be an abstract clone. For every   $k\geq 0$ and $x\in B_n$, we define 
$x^{+k}= q_n^{n+k}(x,\e_0^{n+k},\ldots,\e_{n-1}^{n+k})\in B_{n+k}$ and $x^\star=\{x^{+k} : k\geq 0\}$. 
 It is routine to prove the following properties: 
(a)  $(x^{+k})^{+n}=x^{+(k+n)}$; (b) if $x^{+k}=y^{+n}$ ($k\geq n$), then 
$x^{+(k-n)}=y$.

Let   $B=\bigcup_{n\geq 0} B_n$ and $X_\mathbf{B}=\{x\in B\ |\ \forall y\in B \ \forall  n>0\!:\  x\neq y^{+n}\}$.  
When $\mathbf B$ is a clone of functions,  $X_\mathbf{B}$ is the set of all functions depending on their last argument.

\begin{remark} \label{e-+}
  We observe that $(\e_i^n)^{+k}=\e_i^{n+k}$ for all $0\leq i<n$ and $k$. Moreover, for all $n$, $\e^{n}_{n-1}\in X_\mathbf{B}$ and $\e^{n}_j\not\in X_\mathbf{B}$ for all $j<n-1$.
\end{remark}

\begin{remark}\label{RC}
 Let   $R_{\mathbf{C}}$ be the  clone of functions associated to a clone algebra $\mathbf{C}$. For every $a\in C$, we have  $\tilde a^{(k)}\in X_{R_{\mathbf{C}}}$ iff $a$ has dimension $k$.
\end{remark}

It is easy to show the following lemma.

\begin{lemma} 
The family of sets $\{x^\star\}_{x\in X_\mathbf{B}}$ is a partition of $B$.
\end{lemma}

Given $y\in B$, we denote by $y^-$ the element of $X_\mathbf{B}$ such that
$y\in {(y^-)}^\star$.

\medskip
We define $\mathbf{B}^{\textrm{ac-ca}}=(B^{\textrm{ac-ca}},q_n^{\mathbf{B}^{\textrm{ac-ca}}},\e_n^{\mathbf{B}^{\textrm{ac-ca}}})_{n\geq 0}$ as follows:
\begin{itemize}
\item $B^{\textrm{ac-ca}}=X_\mathbf{B}$;
\item $\e_n^{\mathbf{B}^{\textrm{ac-ca}}}=\e_n^{n+1}$ (for all $n\geq 0$);
\item  $q_n^{\mathbf{B}^{\textrm{ac-ca}}}(x,x_0,\ldots,x_{n-1})$ is defined as follows. Given $y\in x^\star,y_0\in x_0^\star,\ldots,y_{n-1}\in x_{n-1}^\star$ chosen all to belong  to $B_k$ for some $k\geq n$, we set 
$q_n(x,x_0,\ldots,x_{n-1})=q_k^k(y,y_0,\ldots y_{n-1},\e_n^k,\ldots \e_{k-1}^k)^{-}$.
\end{itemize}

\begin{proposition}  \label{prop:arity-dimension}
 $\mathbf{B}^{\textrm{ac-ca}}$ is well-defined and is a finite-dimensional clone algebra. Moreover,  $x\in X_\mathbf{B}\cap B_k$  if and only if  $x$ has dimension $k$ in $\mathbf{B}^{\textrm{ac-ca}}$.
\end{proposition}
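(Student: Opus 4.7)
The plan has three components: establishing well-definedness, verifying axioms (C1)--(C5), and proving the arity--dimension correspondence (from which finite-dimensionality follows).

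For well-definedness, Lemma \ref{acca-nice-equiv} already guarantees that once an ambient arity $k$ large enough to accommodate all inputs is chosen, the representatives in $B_k$ are forced, so the only remaining issue is independence of the output $\approx_{\mathbf{B}}$-class from the choice of $k$. I would reduce this to invariance under the step $k \mapsto k+1$: writing $y = q^k_k(x, x_0, \ldots, x_{n-1}, \e^k_n, \ldots, \e^k_{k-1})$, unfolding $z^{+1} = q^m_{m+1}(z, \e^{m+1}_0, \ldots, \e^{m+1}_{m-1})$ and using the associativity and left unit laws of $\mathbf{B}$ (together with Remark \ref{e-+} to handle projections) should show that
$$q^{k+1}_{k+1}\bigl(x^{+1}, x_0^{+1}, \ldots, x_{n-1}^{+1}, \e^{k+1}_n, \ldots, \e^{k+1}_k\bigr) = y^{+1},$$
which is precisely the required invariance.

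The axioms (C1)--(C5) are then verified by choosing a common padding arity and reducing directly to the abstract clone axioms. Concretely: (C1) is the left unit law together with Remark \ref{e-+} to identify the padded representative of $\e_i$ with $\e_i^k$; (C2) is the same left unit law applied at position $j \geq n$, where the padded $j$-th argument is $\e_j^k$ itself; (C3) is the right unit law after padding both sides to arity $k$; (C4) is a direct consequence of the $k$-independence of the definition of $q_n^{\mathbf{B}^{\textrm{ac-ca}}}$ established above, since the two sides can be computed using the same ambient $k$; and (C5) is the associativity axiom of the abstract clone, with the left and right unit laws used repeatedly to absorb the padded $\e_i^k$ arguments appearing on both sides of the identity.

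For the arity--dimension correspondence, fix a class $K$ of arity $m$ with unique minimum representative $\tilde x \in B_m$. One direction is a straightforward padding computation: for every $j \geq m$, evaluating $q_{j+1}^{\mathbf{B}^{\textrm{ac-ca}}}([\tilde x], \e_0, \ldots, \e_{j-1}, \e_{j+1})$ at ambient arity $k \geq j+2$ and simplifying via associativity and the left unit law yields back $[\tilde x]$, so $[\tilde x]$ is independent of $\e_j^{\mathbf{B}^{\textrm{ac-ca}}}$. For the other direction, I would argue by contradiction: assuming $[\tilde x]$ is independent of $\e_{m-1}$, computing the independence equation at ambient arity $m+1$ and invoking the uniqueness clause of Lemma \ref{acca-nice-equiv} extracts the $B_{m+1}$-identity
$$q^m_{m+1}(\tilde x, \e^{m+1}_0, \ldots, \e^{m+1}_{m-2}, \e^{m+1}_m) = q^m_{m+1}(\tilde x, \e^{m+1}_0, \ldots, \e^{m+1}_{m-1}).$$
Applying $q^{m+1}_m(-, \e^m_0, \e^m_1, \ldots, \e^m_{m-1}, \e^m_0)$ to both sides and simplifying via associativity together with both unit laws produces $q^m_m(\tilde x, \e^m_0, \ldots, \e^m_{m-2}, \e^m_0) = \tilde x$ in $B_m$. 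Setting $y = q^m_{m-1}(\tilde x, \e^{m-1}_0, \ldots, \e^{m-1}_{m-2}, \e^{m-1}_0) \in B_{m-1}$, an analogous unfolding via associativity and left unit then gives $y^{+1} = \tilde x$, contradicting the minimality of $m$. Finite-dimensionality is an immediate corollary since every class has finite arity by Lemma \ref{acca-nice-equiv}. The main obstacle is this converse direction: the non-obvious move is the substitution $\e^{m+1}_m \mapsto \e^m_0$, which exploits the independence hypothesis to lift $\tilde x$ out of $B_m$ and produce an explicit representative of $K$ in $B_{m-1}$.
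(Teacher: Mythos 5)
Your proof is correct and follows essentially the same route as the paper's: the well-definedness check is the paper's computation carried out one padding step at a time instead of in a single jump by $+l$; the verification of (C1)--(C5) is the routine reduction the paper leaves to the reader; and the arity--dimension correspondence rests on the same two ingredients, namely independence of $\e_j$ for $j$ at least the arity, and substitution of a low-level projection into the independence equation to manufacture a representative one sort lower. The only presentational difference is in the converse direction: you descend a single level by contradiction from independence of $\e_{m-1}$ alone, whereas the paper directly builds $b'=q_n^k(b,\e_0^k,\ldots,\e_{k-1}^k,\e_0^k,\ldots,\e_0^k)$ from the full dimension hypothesis; the substitution $\e^{m+1}_m\mapsto\e^m_0$ that you single out as the key move is exactly the paper's padding by $\e_0^k$.

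One shared caveat worth recording: both your descent and the paper's construction of $b'$ require an element of $B_{m-1}$ (resp.\ $B_k$) to substitute, namely $\e_0^{m-1}$ (resp.\ $\e_0^k$), and these distinguished elements exist only for $m\geq 2$ (resp.\ $k\geq 1$). Neither argument therefore covers the boundary case of a class of arity $1$ whose image is independent of $\e_0$ --- for instance a constant unary operation in an abstract clone with $B_0=\emptyset$ --- where ``arity equals dimension'' can actually fail. This is a gap you inherit from the paper's own proof rather than introduce, but if you want your write-up to be airtight you should either restrict the claim to arities $\geq 1$ or add a hypothesis guaranteeing that zero-dimensional classes meet $B_0$.
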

\begin{proof}
  We check that $q_n^{\mathbf{B}^{\textrm{ac-ca}}}$  is well-defined. Let
  $x,x_0,\ldots,x_{n-1}\in X_{\mathbf B}$, and $y\in x^\star,y_0\in x_0^\star,\ldots,y_{n-1}\in x_{n-1}^\star$ be  elements of  $B_k$ for some $k\geq n$. We have to prove that, for $n\leq k<k'=k+l$,  the elements 
$A  =  q_{k'}^{k'}(y^{+l}, y_0^{+l},\ldots,  y_{n-1}^{+l},\e_n^{k'},\ldots \e_{k-1}^{k'})$ and
$C = (q_k^k(y,y_0,\ldots y_{n-1},\e_n^k,\ldots \e_{k-1}^k))^{+l}$ are equal.
Unfolding the definitions of $y^{+l}=q_k^{k'}(y,\e_0^{k'},\ldots,\e_{k-1}^{k'})$ and $C=q_k^{k'}(q_k^k(y,y_0,\ldots y_{n-1},\e_n^k,\ldots \e_{k-1}^k),\e_0^{k'},\ldots,\e_{k-1}^{k'})$, and applying axioms (A1) and  (A3), we obtain indeed that both elements $A$ and $C$  are equal to $q_k^{k'}(y,y_0^{+l},\ldots,y_{n-1}^{+l},\e_n^{k'},\ldots,\e_{k-1}^{k'})$.
The straightforward verification of the  axioms (C1) to (C5) is left to the reader.

We show now that $\mathbf{B}^{\textrm{ac-ca}}$ is finite-dimensional. Let $x\in X_\mathbf{B}\cap B_k$, $k'> k$ and $y\in X_\mathbf{B}\cap B_m$. We  have, by definition of $q_n$ and using (A1)  and Remark \ref{e-+} (for big enough $l$):
$$\begin{array}{lll}
 q_{k'}(x,\e_0,\ldots,\e_{k'-2},y) & =& q_l^l(x^{+(l-k)},(\e^1_0)^{+(l-1)},\ldots, (\e_{k'-2}^{k'-1})^{+(l-k'+1)},y^{+(l-m)},\e_{k'}^l,\ldots,\e_{l-1}^l)^-\\
& =&  q_l^l(q_k^l(x,\e_0^l,\ldots,\e_{k-1}^l),\e_0^l,\ldots, \e_{k'-2}^l, y^{+(l-m)},\e_{k'}^l,\ldots,\e_{l-1}^l)^- \\
& =_{(A3)}& q_k^l(x,\e_0^l,\ldots, \e_{k-1}^l)^-\\
& =_{\textit{def}} & (x^{+(l-k)})^-\\
& =& x.
\end{array}$$
Thus $x$  has dimension $\leq k$. We now show that $x$ has indeed dimension $k$.
Assume by contraposition that $q_k(x,\e_0,\ldots,\e_{k-2},y)=x$ for every $y$. If we define $z=  q_k^{k-1}(x,\e^{k-1}_0,\dots,\e^{k-1}_{k-2},\e^{k-1}_{k-2})$, then $x= z^{+1}$ and hence $x\not\in X_\mathbf{B}$. Putting all together, we have proved that $\mathbf{B}^{\textrm{ac-ca}}$ is finite-dimensional, and that $x\in X_\mathbf{B}\cap B_k$  if and only if $x$ has dimension $k$. 
\end{proof}

\begin{remark}\label{rem:approx-top}
One checks readily that our construction $(\_)^{\textrm{ac-ca}}$ generalises the construction of a clone algebra
$F^\top$ from a clone of functions $F$ given in \cite[Section 3]{BS22} (see also Example \ref{exa:topext}). Viewing $F$ as an abstract clone, we have $F^\top=F^{\textrm{ac-ca}}$, 
because $f^-= g^-$ in our sense if and only if $f^\top=g^\top$. 
\end{remark}

\begin{lemma} \label{lem:preparation-ac-ca-adjunction}
\begin{enumerate}
\item  If $\mathbf{B}$ is an abstract clone, the map $\eta_{\mathbf{B}}:\mathbf{B}\rightarrow R_{\mathbf{B}^{\textrm{ac-ca}}}$, defined by
$\eta_{\mathbf{B}}(x) = \widetilde{x^-}^{(k)}$ for every $x\in B_k$, is an isomorphism of abstract clones.
\item If $\mathbf{C}$ is a clone algebra, the map $f_{\mathbf{C}}: \mathbf{C}_{\mathrm{fd}}\rightarrow (R_{\mathbf{C}})^{\textrm{ac-ca}}$, defined  on an element $a$ of dimension $k$ by
$f_{\mathbf{C}}(a)=\tilde a^{(k)}$, is an isomorphism of clone algebras.
\end{enumerate}
\end{lemma}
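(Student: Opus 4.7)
The plan is to verify that each of the two maps is a well-defined morphism and then bijective. Both arguments rest on Lemma \ref{acca-nice-equiv}, Proposition \ref{prop:arity-dimension}, and Remark \ref{rem:tilde-injective}.

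For part (1), first I would check that $\eta_{\mathbf{B}}$ does send $B_k$ into the $k$-ary part of $R_{\mathbf{B}^{\textrm{ac-ca}}}$: for $x\in B_k$, Proposition \ref{prop:arity-dimension} tells us that $[x]$ has dimension at most $k$ in $\mathbf{B}^{\textrm{ac-ca}}$, so $\widetilde{[x]}^{(k)}$ is a legitimate $k$-ary element. Preservation of the $\e_i^k$ follows from axiom (C1) of clone algebras together with the identification $[\e_i^{i+1}]=[\e_i^k]$ for $k>i$ (Remark \ref{e-+}). Preservation of composition is the computational heart: one unfolds $q_n^{\mathbf{B}^{\textrm{ac-ca}}}([x],[y_0],\ldots,[y_{n-1}])$ for $x\in B_n$ and $y_i\in B_k$ by lifting $x$ to $x^{+(k-n)}=q_n^k(x,\e_0^k,\ldots,\e_{n-1}^k)$ and applying $q_k^k$ in $\mathbf{B}$ with padding $\e_n^k,\ldots,\e_{k-1}^k$; a single use of associativity followed by the left-unit law collapses the result to $q_n^k(x,y_0,\ldots,y_{n-1})$, which matches exactly the composition prescription in the concrete clone $R_{\mathbf{B}^{\textrm{ac-ca}}}$. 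Injectivity then follows from Remark \ref{rem:tilde-injective} together with the uniqueness of representatives in Lemma \ref{acca-nice-equiv}, and surjectivity from Proposition \ref{prop:arity-dimension}: any $\widetilde{[a]}^{(k)}\in (R_{\mathbf{B}^{\textrm{ac-ca}}})_k$ satisfies $\mathrm{dim}[a]\leq k$, so $[a]$ has a representative $b\in B_k$ and $\eta_{\mathbf{B}}(b)=\widetilde{[a]}^{(k)}$.

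For part (2), the first task is well-definedness of $f_{\mathbf{C}}$, since the same finite-dimensional $a$ may be witnessed by different $k\geq\mathrm{dim}(a)$. By Remark \ref{rem:approx-top}, two elements of the concrete clone $R_{\mathbf{C}}$ are $\approx$-equivalent iff their top extensions agree; evaluating $\tilde a^{(k+1)}$ at $(c_0,\ldots,c_k)$ and invoking Lemma \ref{lem:ind1} (since $a$ is independent of $\e_k$) reduces it to $\tilde a^{(k)}(c_0,\ldots,c_{k-1})$, so $[\tilde a^{(k)}]=[\tilde a^{(k+1)}]$ in $(R_{\mathbf{C}})^{\textrm{ac-ca}}$. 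Preservation of $\e_n^{\mathbf{C}}$ is immediate, since $\mathrm{dim}(\e_n)=n$ and $\widetilde{\e_n}^{(n+1)}$ is by (C1) the projection $\e_n^{n+1}$ in $R_{\mathbf{C}}$, whose class is precisely $\e_n^{(R_{\mathbf{C}})^{\textrm{ac-ca}}}$. For preservation of $q_n^{\mathbf{C}}$, I would pick $k$ bounding $\mathrm{dim}(a)$ and all $\mathrm{dim}(b_i)$, unfold the ac-ca composition as $[q_k^k(\tilde a^{(k)},\tilde b_0^{(k)},\ldots,\tilde b_{n-1}^{(k)},\widetilde{\e_n}^{(k)},\ldots,\widetilde{\e_{k-1}}^{(k)})]$, and observe via the concrete-clone composition formula combined with axiom (C4) that this equals $[\widetilde{q_n(a,\mathbf{b})}^{(k)}]=f_{\mathbf{C}}(q_n(a,\mathbf{b}))$. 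Bijectivity then follows exactly as in part (1), via Remark \ref{rem:tilde-injective} and Lemma \ref{acca-nice-equiv}.

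The main obstacle I anticipate is keeping the bookkeeping of arities and of the shift operation $(\_)^{+l}$ straight across the two sides: on the ac-ca side the lifting is hard-coded in the definition of $q_n^{\mathbf{B}^{\textrm{ac-ca}}}$ via padding by projections, while on the clone-algebra side this same padding is absorbed by axiom (C4). Once this matching is made explicit, each compatibility computation reduces to one application of associativity and of the left-unit law (respectively of (C4)), and the rest of the proof is routine.
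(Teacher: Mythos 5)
Your proof is correct and follows essentially the same route as the paper: bijectivity of both maps is obtained exactly as in the text, from Remark \ref{rem:tilde-injective}, the uniqueness of representatives in Lemma \ref{acca-nice-equiv}, and the arity--dimension correspondence of Proposition \ref{prop:arity-dimension}. The only difference is that you spell out the morphism verifications (preservation of projections and of composition, and the well-definedness of $f_{\mathbf{C}}$ across different choices of $k$ via Lemma \ref{lem:ind1}), which the paper explicitly leaves to the reader; your computations there are sound, modulo the symmetric case $k<n$ in the composition check for $\eta_{\mathbf{B}}$, which is handled by the same associativity/left-unit argument.
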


\begin{proof} 
The proof that $\eta_{\mathbf{B}}$ and $f_{\mathbf{C}}$ are morphisms in the respective categories is left to the reader. Suppose that 
$\eta_{\mathbf{B}}(x) = \eta_{\mathbf{B}}(y)$, for $x,y\in B_k$. By Remark \ref{rem:tilde-injective}, we have
$x^-=y^-$, and then $x=y$. We now show the  surjectivity of $\eta_{\mathbf{B}}$. By definition, any element of $(R_{\mathbf{B}^{\textrm{ac-ca}}})_k$ is of the form $\widetilde{y}^{(k)}$ for some $y\in X_\mathbf{B}$. Since $y=y^-$, we get the conclusion. 

We now move to the second claimed isomorphism. By Remark \ref{RC} the map $f_{\mathbf{C}}$ is well-defined. 
Surjectivity is immediate by definition. If $\tilde a^{(k)}=
\tilde b^{(k)}$, then $a=b$ by Remark \ref{rem:tilde-injective}.
\end{proof}

\begin{corollary} \label{cor:evans}
Every abstract clone is isomorphic to some clone of functions.
\end{corollary}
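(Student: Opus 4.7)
The plan is to derive the corollary directly from the preceding machinery, in particular from part (1) of Lemma \ref{lem:preparation-ac-ca-adjunction}. Given an arbitrary abstract clone $\mathbf{B}$, the construction $(\_)^{\textrm{ac-ca}}$ produces a clone algebra $\mathbf{B}^{\textrm{ac-ca}}$, and then the construction $R_{(\_)}$ associates to any clone algebra a concrete clone. So I would simply observe that $R_{\mathbf{B}^{\textrm{ac-ca}}}$ is a concrete clone by construction, and invoke part (1) of the lemma to conclude that the map $\eta_{\mathbf{B}}: \mathbf{B} \to R_{\mathbf{B}^{\textrm{ac-ca}}}$ sending $x \in B_k$ to $\widetilde{[x]}^{(k)}$ is an isomorphism of abstract clones.

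There is really no obstacle to address beyond stitching the two pieces together: the hard work has already been done in establishing that $\mathbf{B}^{\textrm{ac-ca}}$ is a well-defined finite-dimensional clone algebra (Proposition \ref{prop:arity-dimension}) and that $\eta_{\mathbf{B}}$ is an isomorphism (Lemma \ref{lem:preparation-ac-ca-adjunction}(1)). The only thing worth emphasising in the corollary's proof is why $R_{\mathbf{B}^{\textrm{ac-ca}}}$ qualifies as a concrete clone, namely that $R_{\mathbf{C}}$ was defined precisely as an indexed family $(R_{\mathbf{C}})^{(k)}$ of honest functions $C^k \to C$ containing all projections and closed under composition, and this applies verbatim with $\mathbf{C} = \mathbf{B}^{\textrm{ac-ca}}$.

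Thus the proof will be two lines: apply Lemma \ref{lem:preparation-ac-ca-adjunction}(1) to the given abstract clone $\mathbf{B}$ to obtain $\mathbf{B} \cong R_{\mathbf{B}^{\textrm{ac-ca}}}$, and note that the right-hand side is by construction a concrete clone. This provides the new proof of Evans' representation theorem announced in the introductory remarks of the subsection, obtained here as a byproduct of the more general adjunction between abstract clones and clone algebras, rather than being invoked as a prior result.
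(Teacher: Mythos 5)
Your proposal is correct and is exactly the paper's argument: the paper proves Corollary \ref{cor:evans} by noting it is immediate from Lemma \ref{lem:preparation-ac-ca-adjunction}(1), with $R_{\mathbf{B}^{\textrm{ac-ca}}}$ being a concrete clone by construction. The extra sentence you add recalling why $R_{\mathbf{C}}$ qualifies as a concrete clone is harmless and already observed where $R_{\mathbf{C}}$ is defined.
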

\begin{proof} Immediate by  Lemma \ref{lem:preparation-ac-ca-adjunction}(1).
\end{proof}

Let $\mathbb{ACL}$ and  $\mathbb{CA}$ be the categories of abstract clones and of clone algebras, respectively, whose morphisms are the morphisms preserving the multi-sorted and one-sorted structure, respectively.
We denote by $\mathbb{CA}^{\mathrm{fd}}$ the full subcategory of $\mathbb{CA}$ whose objects are the 
finite-dimensional clone algebras.

The constructions $R_{(\_)}$ (in which each $R_{\mathbf{C}}$ is viewed as an abstract clone) and 
$(-)^{\textrm{ac-ca}}$ can be extended to functors, as follows:
\begin{itemize}
\item Given a morphism $h:\mathbf{C}_1\rightarrow\mathbf{C}_2$ of clone algebras, we define the morphism of abstract clones $R_h:R_{\mathbf{C}_1}\rightarrow R_{\mathbf{C}_2}$ as follows:  $R_h(\tilde a^{(k)})= \widetilde{h(a)}^{(k)}$.  This is well-defined, because the dimension of $h(a)$ is less than or equal to the dimension of $a$.
\item
Given a morphism $g:\mathbf{B}_1\rightarrow \mathbf{B}_2$ of abstract clones, we define a morphism of clone algebras
$g^{\textrm{ac-ca}}: (\mathbf{B}_1)^{\textrm{ac-ca}} \rightarrow (\mathbf{B}_2)^{\textrm{ac-ca}}$ as follows:
$g^{\textrm{ac-ca}}(x)=g(x)^-$ for any $x\in X_{\mathbf{B}_1}$. 
\end{itemize}

\begin{theorem} \label{prop:ca-ac-adjunction}
The functor $(\_)^{\textrm{ac-ca}}$ is left adjoint to the functor $R_{(\_)}:\mathbb{CA}\rightarrow\mathbb{ACL}$.
The adjunction restricts to an equivalence between $\mathbb{ACL}$ and $\mathbb{CA}^{\mathrm{fd}}$.
\end{theorem}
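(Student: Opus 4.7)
The plan is to verify the adjunction via the universal arrow property of $\eta_{\mathbf B}$ and to derive the equivalence by restriction. Explicitly, I would show that for every abstract clone $\mathbf B$, every clone algebra $\mathbf C$, and every morphism $g\colon \mathbf B\to R_{\mathbf C}$ in $\mathbb{ACL}$, there is a unique clone algebra morphism $\tilde g\colon \mathbf B^{\textrm{ac-ca}}\to\mathbf C$ with $R_{\tilde g}\circ\eta_{\mathbf B}=g$. The definition of $\tilde g$ is forced by the equation: given $[x]$ with $x\in B_n$, the element $g(x)$ lies in $(R_{\mathbf C})_n$ and therefore has the form $\tilde a^{(n)}$ for some $a\in C$ of dimension $\leq n$, with $a$ uniquely determined by Remark \ref{rem:tilde-injective}. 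Setting $\tilde g([x])=a$, uniqueness is immediate: any $h$ satisfying $R_h\circ\eta_{\mathbf B}=g$ must fulfil $\widetilde{h([x])}^{(n)}=R_h(\widetilde{[x]}^{(n)})=g(x)=\tilde a^{(n)}$, so $h([x])=a$ by Remark \ref{rem:tilde-injective}.

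Second, I would verify that $\tilde g$ is well-defined on classes. Since any clone morphism preserves the derived operation $(-)^{+k}$, we have $g(x^{+k})=(g(x))^{+k}$; combined with the identity $(\tilde a^{(n)})^{+k}=\tilde a^{(n+k)}$ for $a$ of dimension $\leq n$ (which follows from axiom (C4) together with Lemma \ref{lem:ind1}), this ensures that the value of $a$ does not depend on the representative. Preservation of $\e_n^{\mathbf B^{\textrm{ac-ca}}}$ is immediate: $g$ sends $\e_n^{n+1}$ to the projection in $R_{\mathbf C}$, which by axiom (C1) equals $\widetilde{\e_n^{\mathbf C}}^{(n+1)}$. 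Preservation of $q_n$ reduces, after lifting all arguments to a common arity $k\geq n$ via the right unit law, to matching the formula for $q_n$ in $\mathbf B^{\textrm{ac-ca}}$ with the substitutional composition in $R_{\mathbf C}$, using (C4) and Lemma \ref{lem:ind1} to absorb the trailing projections.

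The universal property yields the adjunction, with counit $\varepsilon_{\mathbf C}\colon(R_{\mathbf C})^{\textrm{ac-ca}}\to\mathbf C$ obtained as the composite of $f_{\mathbf C}^{-1}$ from Lemma \ref{lem:preparation-ac-ca-adjunction}(2) with the inclusion $\mathbf C_{\mathrm{Fin}}\hookrightarrow\mathbf C$; naturality of $\eta$ and $\varepsilon$ is a routine check. For the restriction, by Proposition \ref{prop:arity-dimension} the functor $(\_)^{\textrm{ac-ca}}$ factors through $\mathbb{CA}^{\mathrm{fd}}$; on this subcategory we have $\mathbf C_{\mathrm{Fin}}=\mathbf C$, so $\varepsilon_{\mathbf C}$ coincides with $f_{\mathbf C}^{-1}$ and is an isomorphism by Lemma \ref{lem:preparation-ac-ca-adjunction}(2), while $\eta_{\mathbf B}$ is always an isomorphism by Lemma \ref{lem:preparation-ac-ca-adjunction}(1). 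Both unit and counit being natural isomorphisms, the restricted pair of functors is an equivalence. The main obstacle I anticipate is the notational bookkeeping needed to show that $\tilde g$ preserves $q_n$: the mixed-arity nature of composition in $\mathbf B^{\textrm{ac-ca}}$ forces one to juggle several applications of (C4) and Lemma \ref{lem:ind1} at once in order to strip away the padding with projections introduced by moving to a common arity.
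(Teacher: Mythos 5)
Your argument is correct, but it verifies the adjunction by a different (equivalent) route than the paper. The paper fixes the unit $\eta$ and the counit $\epsilon_{\mathbf C}=(f_{\mathbf C})^{-1}$ followed by the inclusion $\mathbf C_{\mathrm{Fin}}\hookrightarrow\mathbf C$, and then checks the two triangular identities; this is quite economical because both identities collapse to the observations $\eta_{R_{\mathbf C}}=R_{f_{\mathbf C}}$ and $R_{\mathbf C_{\mathrm{Fin}}}=R_{\mathbf C}$, so no general transpose ever has to be constructed or shown to be a morphism. You instead exhibit $\eta_{\mathbf B}$ as a universal arrow: given $g\colon\mathbf B\to R_{\mathbf C}$ you build the transpose $\tilde g$ explicitly via Remark \ref{rem:tilde-injective}, prove uniqueness, and then must verify that $\tilde g$ is a clone algebra morphism (well-definedness on $\approx$-classes via $(\tilde a^{(n)})^{+k}=\tilde a^{(n+k)}$, preservation of $\e_n$ via (C1), preservation of $q_n$ via (C4) and Lemma \ref{lem:ind1} after lifting to a common arity). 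This costs more bookkeeping — exactly the obstacle you flag — but it buys an explicit description of the bijection $\mathbb{CA}(\mathbf B^{\textrm{ac-ca}},\mathbf C)\cong\mathbb{ACL}(\mathbf B,R_{\mathbf C})$ and makes the counit emerge as $\widetilde{id_{R_{\mathbf C}}}$ rather than being posited in advance. Your treatment of the restricted equivalence (left adjoint lands in $\mathbb{CA}^{\mathrm{fd}}$ by Proposition \ref{prop:arity-dimension}, $\eta$ always iso, $\epsilon$ iso on finite-dimensional objects) coincides with the paper's. One small point to tidy when writing this up: in the universal-arrow formulation the functoriality of $(\_)^{\textrm{ac-ca}}$ and the naturality of $\eta$ are induced automatically by the universal property, so what actually remains to check is that the induced action on morphisms agrees with the definition $g^{\textrm{ac-ca}}([x])=[g(x)]$ already given in the paper; that is the precise content of your ``routine naturality check.''
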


\begin{proof}
In reference to Lemma \ref{lem:preparation-ac-ca-adjunction}, we take $\eta$ as unit, and we define the counit $\epsilon$ by $\epsilon_{\mathbf{C}}=(f_{\mathbf{C}})^{-1}$, considered as a map from $(R_{\mathbf{C}})^{\textrm{ac-ca}}$ to $\mathbf{C}$. Thus $\epsilon_{\mathbf{C}}$ is injective but not surjective in general. However,
we note that if $\mathbf{C}$ is finite-dimensional, then $\epsilon_{\mathbf{C}}$ is an iso, from which the final part of the statement follows immediately.
We check the triangular identities  
for $\eta$ and $\epsilon$.

(1) $R_{\epsilon_{\mathbf{C}}} \circ \eta_{R_{\mathbf C}}=id_{R_{\mathbf C}}$.
Unfolding the definitions, we get that $\eta_{R_{\mathbf C}}=R_{f_{\mathbf{C}}}$, hence
$R_{\epsilon_{\mathbf{C}}} \circ \eta_{R_{\mathbf C}}=R_{(\epsilon_{\mathbf{C}}\circ f_{\mathbf{C}})}$. By definition of $\epsilon$, $\epsilon_{\mathbf{C}}\circ f_{\mathbf{C}}:\mathbf{C}_{\mathrm{fd}}\rightarrow \mathbf{C}$ is an inclusion, which is mapped to the identity by $R_{(\_)}$ because by definition we have
$R_{\mathbf{C}_{\mathrm{fd}}}=R_{\mathbf{C}}$.

(2)  $\epsilon_{\mathbf{B}^{\textrm{ac-ca}}}\circ
(\eta_{\mathbf{B}})^{\textrm{ac-ca}}= id_{\mathbf{B}^{\textrm{ac-ca}}}$. The argument is similar and simpler. We note that
$(\mathbf{B}^{\textrm{ac-ca}})_{\mathrm{fd}}=\mathbf{B}^{\textrm{ac-ca}}$, 
so that $\epsilon_{\mathbf{B}^{\textrm{ac-ca}}}$ is the inverse of $f_{\mathbf{B}^{\textrm{ac-ca}}}$. We also have, again unfolding the definitions, that
$(\eta_{\mathbf{B}})^{\textrm{ac-ca}}=f_{\mathbf{B}^{\textrm{ac-ca}}}:\mathbf{B}^{\textrm{ac-ca}}\rightarrow (R_{\mathbf{B}^{{\textrm{ac-ca}}}})^{\textrm{ac-ca}}$. 
\end{proof}

\section{The variety of merge algebras}\label{sec:merge}
We  introduce \emph{merge algebras} as an algebraic 
abstraction of sets of $\omega$-sequences.

\begin{definition} A \emph{merge algebra} is a tuple $\mathbf A=(A,\star_n^{\mathbf A}, \bar\sigma^{\mathbf A})_{n\in\omega,\sigma\in S_\omega}$, where   $\bar \sigma^{\mathbf A}$ is a unary operation for every finite permutation $\sigma\in  S_\omega$ and $\star_n^{\mathbf A}$ is a binary operation  for every $n\geq 0$,
satisfying the following conditions:
\begin{itemize}
\item[(B1)] 
 $(A,\star_n^{\mathbf A})$ is an idempotent semigroup.
\item[(B2)] $x\star_0 y=y$.
\item[(B3)] ($k\geq n$): $(x\star_k y)\star_n z= x \star_n z$ and $x\star_k (y\star_n z)= x\star_k z$.
\item[(B4)] ($k< n$): $(x\star_k y)\star_n z= x\star_k (y \star_n z)$.
\item[(B5)] $\bar\sigma(\bar\tau (x))=\overline{\tau\circ \sigma}(x)$ and $\bar \iota (x)=x$, where $\iota$ is the identity permutation.
\item[(B6)]  Let $n\leq k$, $\sigma$ be a permutation of $k$, and $f:k\to \{x,y\}$ be the map such that $f_i=x$ iff $\sigma(i)< n$.  Then:
$$\bar\sigma(x\star_n y)= ((\dots(\bar\sigma(f_0) \star_1\bar\sigma(f_1) )\star_2\dots)\star_{k-1} \bar\sigma( f_{k-1}))\star_k y.$$
\item[(B7)] If $\sigma(i)=\tau(i)$ for every $m\leq i< n$, then: 
$$(z\star_m\bar\sigma( x))\star_n y=(z\star_m \bar\tau(x))\star_n y.$$

\end{itemize}
\end{definition}

From (B1) and (B3) it follows that each $(A,\star_n^{\mathbf A})$ is a rectangular band, i.e., an idempotent semigroup satisfying the identity:  $x\star_n y\star_n z= x \star_n z$.

Axioms (B4) and (B3) together imply that for any sequence 
$m_1\leq m_2\leq\dots\leq m_k$  of natural numbers,  we can write unambiguously:
\begin{equation}\label{ninetto}x_1\star_{m_1} x_2 \star_{m_2} \dots x_{k}\star_{m_k} x_{k+1},\end{equation}
as any possible parenthesing of this sequence yields the same result.

The map $\sigma \mapsto \bar\sigma$ is an antimorphism from the group $S_\omega$ 
of finite permutations to the group of bijective endomaps of $A$, meaning that it reverses composition: $\overline{\tau\circ \sigma}= \bar\sigma\circ\bar\tau$. 

\subsection{The canonical merge algebras}
In this section, we introduce the intended model of  identities (B1)-(B7).
 Let $X$ be a set, and $X^\omega$ denote  the set of $\omega$-sequences on $X$. We define the algebra  $\mathbf{Seq}(X)=(X^\omega, \star_n^{\mathrm{seq}}, \bar\sigma^{\mathrm{seq}})$ as follows, for every $s,u\in X^\omega$:
\begin{itemize}
\item $s \star_n^{\mathrm{seq}} u=(s_0,\dots,s_{n-1},u_n,u_{n+1},\dots)$;
\item $\bar\sigma^{\mathrm{seq}}(s)=(s_{\sigma_i}:i\in\omega)$.
\end{itemize}
We omit the superscript $^{\mathrm{seq}}$ when no ambiguity arises.
The algebra $\mathbf{Seq}(X)$ is indeed a merge algebra,
 called \emph{the fully canonical merge algebra on $X$}.

\begin{definition}
  A merge algebra $\mathbf B$ is called
\emph{canonical} if  it  is a subalgebra of a fully canonical merge algebra $\mathbf{Seq}(X)$ for some set $X$. 
\end{definition}

In the next lemma we show that the universe of a canonical merge algebra is a trace (see Section \ref{sec:pre}).

\begin{lemma}\label{lem:tra}
  Let $\mathbf{Seq}(X)$ be the fully canonical merge algebra on $X$, and let $Y\subseteq X^\omega$. Then $Y$ is a merge subalgebra of $\mathbf{Seq}(X)$ iff $Y$ is a trace on a subset of $X$.
\end{lemma}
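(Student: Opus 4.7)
The plan is to prove both directions separately: the backward direction ($\Leftarrow$) follows almost immediately from the observation that $\star_n$ and $\bar\sigma$ are ``$\equiv$-small'' operations, while the forward direction ($\Rightarrow$) requires a concrete construction combining a transposition with a ``sandwich'' of two $\star$-operations.

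For the backward direction, suppose $X$ is a trace on some $B \subseteq A$, so $X \subseteq B^\omega$ and $X$ is closed under $\equiv$ within $B^\omega$. For any $s,t \in X$, the sequence $s \star_n t$ agrees with $t$ on all positions $\geq n$ and takes values in $\mathrm{set}(s) \cup \mathrm{set}(t) \subseteq B$; hence $s \star_n t \equiv t$ and $s \star_n t \in X$. For any $s \in X$ and $\sigma \in S_\omega$, the sequence $\bar\sigma(s) = (s_{\sigma_i})$ has the same set of values as $s$ and differs from $s$ only at the finitely many positions in $\mathrm{dom}(\sigma)$; hence $\bar\sigma(s) \equiv s$ and $\bar\sigma(s) \in X$. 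So $X$ is a merge subalgebra.

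For the forward direction, suppose $X$ is a merge subalgebra; I would show $X$ is a trace on $B := \mathrm{dom}(X) \subseteq A$. Fix $s \in X$ and $r \in B^\omega$ with $r \equiv s$, and let $F = \{i_1 < \cdots < i_m\}$ be the finite set of positions where $r_i \neq s_i$. The heart of the argument is the single-position case: if $s' \in X$ and $r'$ agrees with $s'$ off a single position $c$ with $r'_c \in \mathrm{dom}(X)$, then $r' \in X$. To verify this, choose $t \in X$ and $k \in \omega$ with $t_k = r'_c$, put $t' = \bar{\tau^c_k}(t) \in X$ so that $t'_c = t_{\tau^c_k(c)} = t_k = r'_c$, and compute directly from the definition of $\star_n^{\mathrm{seq}}$ that $(s' \star_c t') \star_{c+1} s' = r'$ (with a small case distinction using (B2) when $c = 0$). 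The general case then follows by iteration: define $r^{(0)} = s$ and let $r^{(j)}$ agree with $r^{(j-1)}$ except at position $i_j$, where it takes value $r_{i_j} \in \mathrm{dom}(X)$; by induction each $r^{(j)} \in X$, and $r^{(m)} = r$.

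The main obstacle is pinpointing the right combination of permutations and merges in the single-position case. The ``sandwich'' $(s' \star_c t') \star_{c+1} s'$ is what enables us to change a single entry of $s'$ to an arbitrary element of $\mathrm{dom}(X)$ while preserving all other entries; once this building block is in place, everything else reduces to a routine induction on $|F|$ together with the observation that $\star_n$ and $\bar\sigma$ only produce sequences equivalent to their operands under $\equiv$ with values already in $\mathrm{dom}(X)$.
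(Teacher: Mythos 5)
Your proof is correct and follows essentially the same strategy as the paper: both directions rest on the observation that $\star_n$ and $\bar\sigma$ produce sequences $\equiv$-equivalent to their operands with values in $\mathrm{dom}(X)$, and the forward direction uses a transposition to bring the desired element to the desired coordinate followed by $\star$-splicing. The only (cosmetic) difference is that the paper replaces the whole prefix in a single left-to-right sweep $v_i=v_{i-1}\star_i\overline{\gamma_i}(u^i)$ capped by $\star_n s$, whereas you replace one coordinate at a time via the sandwich $(s'\star_c t')\star_{c+1}s'$ and iterate over the finitely many differing positions.
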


\begin{proof}
 Let $Y$ be a trace on $Z\subseteq X$. If $s\in Y$ and $\sigma\in S_\omega$ is a permutation of $n$, then $\bar\sigma(s)= s[s_{\sigma_0},\dots, s_{\sigma_{n-1}}]\in Y$, because $s_{\sigma_i}\in Z$. Moreover, if $s,u\in Y$, then $s\star_n u= u[s_0,\dots,s_{n-1}]\in Y$, because $s_i\in Z$.
 
 Let $Y$ be a merge subalgebra  of $\mathbf{Seq}(X)$. We show that $Y$ is a trace on $\mathrm{dom}(Y)$. Let $s\in Y$ and $a_0,\dots,a_{n-1}\in \mathrm{dom}(Y)$. For each $a_i$ there exists $u^i\in Y$ and $h(i)\in \omega$ such that $u^i_{h(i)}=a_i$.
 Let $\gamma_i$ be the transposition $(i,h(i))$.
Then the sequence $\overline{\gamma_i}(u^i)$ has $a_i$ at position $i$.
We define by induction on $0\leq i\leq n-1$:
$v_0= \overline{\gamma_0}(u^0);\qquad v_{i}= v_{i-1}\star_{i} \overline{\gamma_{i}}(u^i)$. By construction $v_0,\ldots v_{n-1} \in Y$. Since $v_{n-1}\star_n s=s[a_0,\dots,a_{n-1}]$,  then $Y$ is a trace.
\end{proof}

\subsection{Some consequences of the axioms}

The following lemmas will be useful in the sequel.
\begin{lemma}\label{lem:exall}
Every merge algebra satisfies the following properties: 
\begin{enumerate}
\item $(\exists z.\ x\star_n z=y\star_n z) \Rightarrow (\forall z.\ x\star_n z=y\star_n z)$.
\item The operations $\star_k$ and $\star_n$ commute: $(x\star_n y)\star_k(x'\star_ny')=(x\star_k x')\star_n(y\star_k y')$.
\end{enumerate}
\end{lemma}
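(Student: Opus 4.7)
For (1), the idea is that axioms (B1) and (B3) jointly make each $(A,\star_n)$ into a rectangular band. Indeed, (B1) already gives associativity and idempotence of $\star_n$, and instantiating (B3) with both the outer and inner indices equal to $n$ produces the absorption law $(x\star_n y)\star_n z = x\star_n z$. Once this rectangular-band identity is in hand, the implication in (1) is immediate: assuming $x\star_n z = y\star_n z$ for some particular $z$, then for every $w$ we have $x\star_n w = (x\star_n z)\star_n w = (y\star_n z)\star_n w = y\star_n w$.

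For (2), I will split on the comparison between $k$ and $n$. The strategy in each case is to use (B3) to \emph{absorb} two of the four occurrences of variables, leaving a three-variable expression that both sides reduce to; the bookkeeping between the two reductions then amounts to associativity along an ascending sequence of indices, which is precisely the content of the remark following (B4) (a consequence of (B3) at equal indices together with (B4)).

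Case $k\geq n$: the second form of (B3), applied with outer index $k$ and inner index $n$, kills the left component of the right factor on the LHS:
$$(x\star_n y)\star_k(x'\star_n y') \;=\; (x\star_n y)\star_k y'.$$
The first form of (B3), applied with outer index $n$ and inner index $k$, kills the right component of the left factor on the RHS:
$$(x\star_k x')\star_n(y\star_k y') \;=\; x\star_n(y\star_k y').$$
Since $n\leq k$, ascending-index associativity rewrites $(x\star_n y)\star_k y'$ as $x\star_n(y\star_k y')$, so both sides coincide.

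Case $k<n$: the roles are mirrored. The first form of (B3) now simplifies the LHS to $x\star_k(x'\star_n y')$ (the left component $y$ of the left factor is absorbed), and the second form of (B3) simplifies the RHS to $(x\star_k x')\star_n y'$ (the left component $y$ of the right factor is absorbed). Since $k<n$, ascending-index associativity identifies $x\star_k(x'\star_n y')$ with $(x\star_k x')\star_n y'$, completing the argument.

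I do not expect any serious obstacle: both identities follow by matching each sub-expression to the appropriate axiom, and the only care required is tracking which of the two forms of (B3) applies once the ordering of $n$ and $k$ is fixed.
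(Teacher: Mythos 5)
Your proof is correct and follows essentially the same route as the paper's: part (1) is the rectangular-band absorption $(x\star_n z)\star_n w = x\star_n w$ obtained from (B1) and (B3) at equal indices, and part (2) is the same case split on $k$ versus $n$, using the two forms of (B3) to absorb subterms and then (B4) (plus equal-index associativity) to re-associate. The only cosmetic difference is that you reduce both sides symmetrically and spell out the second case, where the paper reduces the right-hand side in two steps and leaves the case $k\leq n$ to the reader.
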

\begin{proof} (1)  Assume $x\star_n c=y\star_n c$ for some $c$. Then, using (B3), we have: $x\star_n z= x\star_n c\star_n z= y\star_n c\star_n z=  y\star_n z$. Thus, $x\star_n z=y\star_n z$
for all $z$.

(2) Let $k> n$. Then we have: 
\[
\begin{array}{lll}
 (x\star_n y)\star_k(x'\star_ny') & =_{(B3)}  &  (x\star_n y)\star_k y' \\
  &  =_{(B3)} & ((x\star_k x')\star_ny)\star_k y'  \\
  & =_{(B4)}  &   (x\star_k x')\star_n(y\star_k y').
\end{array}
\]
A similar argument applies when $k\leq n$.
\end{proof}

\begin{lemma}\label{lem:b8b12} Let $\sigma$ and $\tau$ be finite permutations. Every merge algebra satisfies the following identities:
\begin{enumerate}
\item If $\sigma$ is a permutation of $n$, then $\bar\sigma(x\star_n y)=\bar\sigma(x)\star_n y$.

\item If  $\sigma$  is a permutation of $\omega\setminus n$,  then  $\bar\sigma(x\star_n y)=x\star_n\bar\sigma(y)$.

\item If $\sigma(i)=\tau(i)$ for every $i\geq n$, then $x\star_n \bar\sigma(y)=x\star_n \bar\tau(y)$.

\item If $\sigma$ is a permutation of $n$ or of $\omega\setminus n$, then $\bar\sigma(x\star_n y)=\bar\sigma(x)\star_n \bar\sigma(y)$.

\item   If $0\leq j\leq k\leq m$ and $\sigma$ is a permutation of $m$ such that $\sigma(i)\geq k$ for every $0\leq i< j$, then
 $\bar\sigma (x \star_k y) \star_j z=\bar\sigma(y)\star_j z$.

\item $x\star_{n+1} y = x\star_n \bar\tau^n_0(\bar\tau^n_0(x)\star_1 z)\star_{n+1}y$.

\end{enumerate}
\end{lemma}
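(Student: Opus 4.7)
My plan is to first establish a key intermediate identity, then derive the six parts by judicious applications of (B6), (B7), the rectangular band property of each $\star_n$ (which follows from (B1) and (B3), as the paper notes), and (B4). The intermediate identity is $(\ast)$: \emph{if $\sigma \in S_\omega$ is a permutation of $k$, then $\bar\sigma(y) \star_k y = \bar\sigma(y)$}. To prove $(\ast)$, apply (B6) with the axiom's $n$ set to $k$: since $\sigma(i) < k$ for every $i < k$, we have $f_i = x$ throughout, so $\bar\sigma(x \star_k y) = \bar\sigma(x) \star_1 \bar\sigma(x) \star_2 \cdots \star_{k-1} \bar\sigma(x) \star_k y$, which collapses to $\bar\sigma(x) \star_k y$ by repeated idempotency. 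Specializing $x := y$ and using $y \star_k y = y$ yields $(\ast)$.

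For (1), apply (B6) with the axiom's $k$ set to $n$: since $\sigma$ permutes $n$ all $f_i = x$, and idempotency collapses the RHS to $\bar\sigma(x) \star_n y$. For (2), pick $K$ large enough that $\sigma$ is a permutation of $K$; apply (B6) with axiom indices $n, k$ set to $n, K$, noting $f_i = x$ for $i < n$ and $f_i = y$ for $n \leq i < K$; idempotency yields $(\bar\sigma(x) \star_n \bar\sigma(y)) \star_K y$, which by (B4) equals $\bar\sigma(x) \star_n (\bar\sigma(y) \star_K y)$, and $(\ast)$ eliminates the trailing factor; finally, the wrapping trick $\bar\sigma(x) = z \star_0 \bar\sigma(x)$ from (B2), combined with (B7) applied with its indices $m, n$ set to $0, n$ and with $\tau = \iota$ (which is permitted since $\sigma(i) = i$ for $i < n$), replaces $\bar\sigma(x)$ by $x$. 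Part (4) follows by two cases: if $\sigma$ is a permutation of $n$, combine (1) with (3) applied to $\tau = \iota$; if $\sigma$ is a permutation of $\omega \setminus n$, combine (2) with the wrapping trick used in part (2). For part (5), apply (B6) to $\bar\sigma(x \star_k y)$ with $K$ large; by hypothesis the first $j$ values of $f_i$ all equal $y$; then, multiplying on the right by $\star_j z$, (B3) drops the tail $\star_K y$, idempotency collapses the first $j$ factors to $\bar\sigma(y)$, and the rectangular band law finishes. Part (6) is a direct computation: apply (B6) to $\bar\tau^n_0(\bar\tau^n_0(x) \star_1 z)$ with axiom indices $n, k$ set to $1, n+1$; since $\tau^n_0(i) = 0$ only at $i = n$, idempotency together with the involution $\bar\tau^n_0 \circ \bar\tau^n_0 = \bar\iota$ from (B5) rewrite the expression as $(\bar\tau^n_0(z) \star_n x) \star_{n+1} z$; then (B4), the rectangular band law for $\star_n$ (dropping the stray $\bar\tau^n_0(z)$), idempotency of $\star_n$, and the rectangular band law for $\star_{n+1}$ collapse $x \star_n [\cdots] \star_{n+1} y$ down to $x \star_{n+1} y$.

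The main obstacle is part (3), since (B7) only rewrites $\bar\sigma$ into $\bar\tau$ under agreement on a middle slice $[m, n)$ (in axiom variables), whereas (3) only assumes agreement on the infinite tail $[n, \infty)$. My workaround is to pick $L$ large enough that both $\sigma$ and $\tau$ are permutations of $L$, so that agreement on $[n, \infty)$ automatically implies agreement on $[n, L)$; then (B7) with axiom indices $m, n$ set to $n, L$ yields $(x \star_n \bar\sigma(y)) \star_L u = (x \star_n \bar\tau(y)) \star_L u$ for every $u$. Specializing $u := y$ and re-associating via (B4), both sides simplify via $(\ast)$ to $x \star_n \bar\sigma(y)$ and $x \star_n \bar\tau(y)$ respectively, yielding the equality.
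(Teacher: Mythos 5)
Your proof is correct and follows essentially the same route as the paper's: each part is obtained by expanding with (B6), collapsing the repeated factors by idempotency, reassociating with (B3)/(B4), and using (B7) to swap permutations that agree on the relevant range; your intermediate identity $(\ast)$ is exactly the step $\bar\sigma(y)=\bar\sigma(y\star_k y)=\bar\sigma(y)\star_k y$ that the paper derives inline in its proofs of (2) and (3). The only differences are presentational (e.g., you apply (B7) at the end of (2) rather than in the middle), not substantive.
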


\begin{proof}

(1) $\bar\sigma(x\star_n y)=_{(B6)} \bar\sigma(x)\star_1\bar\sigma(x)\star_2\dots\star_{n-1}\bar\sigma(x)\star_n y=_{(B1)}\bar\sigma(x)\star_n y$.

\smallskip\noindent
(2) Let $k\geq n$, and let $\sigma$ be a permutation of  $k$ such that $\sigma(i)=i$ for every $0\leq i\leq n-1$. Then:\\
$\bar\sigma(x\star_n y)=_{(B6)} \bar\sigma(x)\star_1\dots\star_{n-1}\bar\sigma(x)\star_{n}\bar\sigma(y) \dots\star_{k-1}\bar\sigma(y)\star_k y=\bar\sigma(x)\star_{n}\bar\sigma(y) \star_k y=_{(B7)} \bar\iota(x)\star_{n}\bar\sigma(y) \star_k y=x\star_{n}\bar\sigma(y) \star_k y=x\star_{n}(\bar\sigma(y) \star_k y)=_{(1)} x\star_{n}\bar\sigma(y\star_k y)=x\star_{n}\bar\sigma(y) $.

\smallskip\noindent
(3) Let $\sigma,\tau$ be permutations of $k\geq n$. Then:
$\bar\sigma(y)= \bar\sigma(y\star_k y)=_{(1)} \bar\sigma(y)\star_k y$. Similarly, for $\tau$.  Hence, $x\star_n \bar\sigma(y)= x\star_n \bar\sigma(y)\star_k y=_{(B7)}x\star_n \bar\tau(y)\star_k y= x\star_n \bar\tau(y)$.

\smallskip\noindent
(4) Let $\sigma$ be a permutation of $n$. Then: $\bar\sigma(x\star_n y)=_{(1)}\bar\sigma(x)\star_n y=_{(B5)}\bar\sigma(x)\star_n \bar\iota(y)=_{(3)}\bar\sigma(x)\star_n \bar\sigma(y)$. Similarly, the result holds if $\sigma$ is a permutation of $\omega\setminus n$.

\smallskip\noindent
 (5)
We apply (B6) to $\bar\sigma (x \star_k y)$. Define $f:m\to \{x,y\}$ such that $f_i=x$ iff $\sigma(i)< k$.
Then $f_i=y$ for every
$0\leq i< j$. Consequently, 
$\bar\sigma (x \star_k y) \star_j z=(\bar\sigma(y)\star_1\dots\star_{j-1}
\bar\sigma(y)\star_j\bar\sigma(f_j)\dots\star_{m-1}\bar\sigma(f_{m-1})\star_m y)\star_j z=_{(B3)} \bar\sigma(y)\star_1\dots\star_{j-1}
\bar\sigma(y)\star_j z=_{(B1)}\bar\sigma(y)\star_j z$.

\smallskip\noindent
(6) If $n=0$, the identity holds trivially. Assume $n\geq 1$.
We apply (B6) to $\bar\tau^n_0(\bar\tau^n_0(x)\star_1 z)$. 
Define $f:m\to \{\bar\tau^n_0(x),z\}$ such that $f_i=\bar\tau^n_0(x)$ iff $\bar\tau^n_0(i)=0$. Then
  $f_i= z$ for every $0\leq i\leq n-1$ and $f_n=\bar\tau^n_0(x)$. Thus, $\bar\tau^n_0(\bar\tau^n_0(x)\star_1 z)=\bar\tau^n_0(z)\star_1\bar\tau^n_0(z)\star_2\dots\star_{n-1}\bar\tau^n_0(z)\star_n \bar\tau^n_0(\bar\tau^n_0(x))\star_{n+1}z=\bar\tau^n_0(z)\star_1\bar\tau^n_0(z)\star_2\dots\star_{n-1}\bar\tau^n_0(z)\star_n x\star_{n+1}z =_{(B1)}
 \bar\tau^n_0(z)\star_n x\star_{n+1}z $. 
  From this we get the conclusion:
  $x\star_n \bar\tau^n_0(\bar\tau^n_0(x)\star_1 z)\star_{n+1}y= x\star_n  \bar\tau^n_0(z)\star_n x\star_{n+1}z \star_{n+1} y =x \star_{n+1}y$.
  \end{proof}

\subsection{Restrictions}\label{sec:res}

This section introduces restrictions in the context of merge algebras by fixing a distinguished element called the "coordinator" and studying how it interacts with the algebraic structure. 

\begin{definition}
 A \emph{pointed merge algebra} is a pair $(\mathbf A,1^\mathbf A)$, where $\mathbf A$ is a merge algebra and $1^\mathbf A$ is a fixed element of $A$, referred to as the \emph{coordinator}.
\end{definition}

Given the coordinator $1$, we write  $x_{\pref n}$ for $x\star_n 1$.
The element $x_{\pref n}$ is called \emph{the $n$-restriction of $x$}.

\begin{lemma}\label{lem:emme} The following properties hold in every pointed merge algebra $(\mathbf A,1^\mathbf A)$:
\begin{itemize}
\item[(1)] $(x_{\pref n})_{\pref n}=x_{\pref n}$.
\item[(2)] $(x_{\pref n})_{\pref n+1}= x_{\pref n}$; hence,  $(x_{\pref n})_{\pref n+k}= x_{\pref n}$ for every $k\geq 0$.
\item[(3)] $(x_{\pref n+1})_{\pref n}=x_{\pref n}$; hence,  $(x_{\pref n+k})_{\pref n}= x_{\pref n}$ for every $k\geq 0$.
\item[(4)]  $1\star_n x= x$ (resp. $1\star_n x=1$) iff $x_{\pref n}=1$ (resp.  $x_{\pref n}= x$).
\item[(5)]  If $\sigma(0)\geq k$, then $\bar\sigma( x_{\pref k})_{\pref 1}= \bar\sigma(1)_{\pref 1}$.

\end{itemize}
\end{lemma}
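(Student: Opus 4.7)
The lemma is a collection of elementary consequences of the merge-algebra axioms once the abbreviations $x_{\pref n} = x\star_n 1$ and $x_{\suf n} = 1\star_n x$ are unfolded. My plan is to dispatch the six items in the order $(4),(1),(2),(3),(5),(6)$, since $(4)$ is repeatedly needed in the manipulations for the other items. The workhorses are the idempotency and rectangular-band identities packaged in (B1) (the latter being noted in the first bullet after the axioms as $(x\star_n y)\star_n z=x\star_n z$, following from (B1) and (B3)), the "sorting" rules (B3) and (B4), and, for (6), Lemma \ref{lem:b8b12}(5).

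For (4), $1_{\pref n}=1\star_n 1=1$ and $1_{\suf n}=1\star_n 1=1$ are immediate from idempotency. For (1), $(x_{\pref n})_{\pref n}=(x\star_n 1)\star_n 1=x\star_n 1=x_{\pref n}$ by the rectangular-band identity. For (2), since $n<n+k$ whenever $k\geq 1$, axiom (B4) gives $(x\star_n 1)\star_{n+k} 1=x\star_n (1\star_{n+k} 1)=x\star_n 1$ using (4); the case $k=0$ is (1). For (3), I would apply (B3) directly to $(x\star_{n+k}1)\star_n 1$ (with $n+k\geq n$) to obtain $x\star_n 1=x_{\pref n}$.

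For (5) I would prove the two equivalences by symmetric short computations. Assuming $x_{\suf n}=x$, i.e.\ $1\star_n x=x$, substitute into $x_{\pref n}$: $x\star_n 1=(1\star_n x)\star_n 1=1\star_n 1=1$ by rectangularity and (4). Conversely, assuming $x_{\pref n}=1$, $x_{\suf n}=1\star_n x=(x\star_n 1)\star_n x=x\star_n x=x$ by rectangularity and idempotency. The equivalence $x_{\pref n}=x\Leftrightarrow x_{\suf n}=1$ is entirely analogous. For (6), the hypothesis $\sigma(0)\geq k$ is exactly the instance $j=1$ of the hypothesis of Lemma \ref{lem:b8b12}(5): applying that lemma with $y=1$ and $z=1$ yields $\bar\sigma(x\star_k 1)\star_1 1=\bar\sigma(1)\star_1 1$, which is $\bar\sigma(1)_{\pref 1}$ on the nose.

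There is no real obstacle here beyond bookkeeping with subscripts: the essential content of the lemma is that $(\_)_{\pref n}$ behaves as an idempotent "truncation" operator, and everything reduces to recognising which axiom governs each positional pattern. The only step where one must pause is (5), to make sure both idempotency and the rectangular-band consequence of (B3) are invoked in the correct order; and in (6), to notice that the hypothesis lines up exactly with the $j=1$ case of Lemma \ref{lem:b8b12}(5).
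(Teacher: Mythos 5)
Your proof is correct and follows essentially the same route as the paper's: items (1)--(5) are the same unfoldings of $x_{\pref n}=x\star_n 1$ and $x_{\suf n}=1\star_n x$ using (B1), (B3)/(B4) and the rectangular-band identity, and (6) is exactly the paper's application of Lemma \ref{lem:b8b12}(5) with $j=1$ and $y=z=1$. The only cosmetic difference is that you treat the general $k$ in (3) directly via (B3) where the paper does $k=1$ and says ``hence''.
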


\begin{proof} 
(1) $(x_{\pref n})_{\pref n}= x\star_{n} 1\star_{n} 1
=x\star_{n} 1= x_{\pref n}$.

\smallskip\noindent
(2) $(x_{\pref n})_{\pref n+1}= x\star_{n} 1\star_{n+1} 1
= x\star_{n} 1= x_{\pref n}$.

\smallskip\noindent
 (3) $(x_{\pref n+1})_{\pref n}= (x\star_{n+1} 1)\star_n 1 =_{(B3)} x \star_n 1=  x_{\pref n}$.

  \smallskip\noindent
  (4) Let  $1\star_n x= x$. Then we have:
 $x_{\pref n}=x \star_n 1=_{hp} 1\star_n x \star_n 1=  1\star_n 1 =1$.
 Conversely, if  $x_{\pref n}= 1$, then $1\star_n x=_{hp}x_{\pref n} \star_n x = x\star_n 1\star_n x=x\star_n x= x$.
 A similar proof works for the other equivalence.
  
 \smallskip\noindent
 (5) Apply Lemma \ref{lem:b8b12}(5) to $j=1$ and $y=z=1$.
    \end{proof}

\begin{definition}
  An element $a\in A$ \emph{has rank $n$} if $n$ is the smallest natural number such that $a_{\pref n}=a$.
\end{definition}

The coordinator $1^\mathbf A$ is the unique element of rank $0$ by axiom (B2).
We denote by $A_{|n}$ the set of all elements of rank $\leq n$, and by $A_{|\omega}$ the set $\bigcup_{n\geq 0} A_{|n}$. It is easy to see that $A_{|n}$ is closed under $\star_k$  for all $k\in\omega$, and that
 $A_{|\omega}$ is a merge subalgebra of $\mathbf A$.

\subsection{Coordinates}\label{sec:coord}

In this section we define coordinates and present key lemmas that highlight their structural properties. Moreover, we discuss specific examples, such as the canonical merge algebra, to illustrate the role of coordinates.

\begin{definition}
  Let $(\mathbf A,1^\mathbf A)$ be a pointed merge algebra. 
For every $a\in A$, the element
$$a[n]=\bar\tau^n_0(a)_{\pref 1}$$
is called the \emph{$n$-coordinate of $a$}.
\end{definition}

The $n$-coordinate $a[n]$ of $a$ depends on the chosen coordinator $1^\mathbf A$, but
(B3) ensures that if two elements $a$ and $b$ have the same coordinates relative to one coordinator, they have the same coordinates relative to any other.
Moreover, every coordinate $a[n]$ has rank $\leq 1$, so that $a[n]\in A_{|1}$. Additionally, note that $a[0]=a_{\pref 1}$ for every $a$.

We define the sequence of coordinates $a_{\sq}=(a[0],\dots,a[n],\dots)$ for $a\in A$,
and  $A_{\sq}=\{a_{\sq} : a\in A\}$.

The following lemma provides fundamental properties of coordinates in a pointed merge algebra.

\begin{lemma}\label{lem:[k]} Let $(\mathbf A,1^\mathbf A)$ be a pointed merge algebra and $a,b\in A$.
\begin{itemize}
\item[(i)] $\bar\sigma ( a)[k]=a[\sigma_k]$. 

\item[(ii)] $(a \star_k b)[n]=\begin{cases}a[n]&\text{if $n<k$}\\ b[n]&\text{if $n\geq k$} \end{cases}$. In particular, $(a_{\pref k})[n]=\begin{cases}a[n]&\text{if $n<k$}\\
1[n]&\text{if $n\geq k$.}\end{cases}$
\item[(iii)] $a[n][0]=a[n]$ and $a[n][k] =1[k]$ for $k\neq 0$.
\end{itemize}

\end{lemma}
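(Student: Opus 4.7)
The plan is to unfold the definition $a[n] = \bar\tau^n_0(a)_{\pref 1}$ and then invoke the identities established in Lemmas~\ref{lem:b8b12} and~\ref{lem:emme}, working case by case.

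For (i), I will rewrite $\bar\sigma(a)[k] = \bar\tau^k_0(\bar\sigma(a))_{\pref 1}$ using the antimorphism law (B5) as $\overline{\sigma\circ\tau^k_0}(a)_{\pref 1}$. Since $(\sigma\circ\tau^k_0)(0) = \sigma_k = \tau^{\sigma_k}_0(0)$, the two permutations $\sigma\circ\tau^k_0$ and $\tau^{\sigma_k}_0$ agree at index~$0$. Axiom (B7) at $m=0$, $n=1$ (combined with (B2) to simplify the spurious $z\star_0$) says precisely that two permutations agreeing at $0$ cannot be distinguished after applying $(-)_{\pref 1}$; hence the above expression equals $\bar\tau^{\sigma_k}_0(a)_{\pref 1} = a[\sigma_k]$.

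For (ii), I split on cases. If $k=0$ then $a\star_0 b = b$ by (B2) and the claim is trivial; otherwise $k\geq 1$. When $n<k$, the transposition $\tau^n_0$ is a permutation of $k$ (its support is $\{0,n\}\subseteq k$), so Lemma~\ref{lem:b8b12}(1) gives $\bar\tau^n_0(a\star_k b) = \bar\tau^n_0(a)\star_k b$, and then (B3) (with $k\geq 1$) collapses $(\bar\tau^n_0(a)\star_k b)\star_1 1$ to $\bar\tau^n_0(a)\star_1 1 = a[n]$. When $n\geq k$, since $\tau^n_0(0)=n\geq k$, Lemma~\ref{lem:b8b12}(5) with $j=1$ and $\sigma=\tau^n_0$ directly yields $\bar\tau^n_0(a\star_k b)\star_1 1 = \bar\tau^n_0(b)\star_1 1 = b[n]$. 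The ``in particular'' clause is then immediate by specialising to $b=1$.

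For (iii), the element $a[n]$ lies in $A_{|1}$ by construction, so $(a[n])_{\pref 1} = a[n]$. Since $\tau^0_0 = \iota$ we get $a[n][0] = \bar\iota(a[n])_{\pref 1} = a[n]$. For $k\neq 0$, I will apply Lemma~\ref{lem:emme}(6) with permutation $\tau^k_0$ (its value at $0$ is $k\geq 1$), rank parameter $1$, and to the element $\bar\tau^n_0(a)$: this gives $\bar\tau^k_0\bigl((\bar\tau^n_0(a))_{\pref 1}\bigr)_{\pref 1} = \bar\tau^k_0(1)_{\pref 1} = 1[k]$ by definition. The main obstacle is in (i): one must be careful with the antimorphism direction of (B5) and recognise that (B7) at $(m,n)=(0,1)$ provides exactly the ``agree-at-one-point'' tool needed to swap the composite $\sigma\circ\tau^k_0$ for the cleaner transposition $\tau^{\sigma_k}_0$ under a trailing $(-)_{\pref 1}$.
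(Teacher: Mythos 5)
Your proof is correct and follows essentially the same route as the paper: (B5) then (B7) at $(m,n)=(0,1)$ for (i), Lemma~\ref{lem:b8b12}(1) plus (B3) versus Lemma~\ref{lem:b8b12}(5) with $j=1$ for the two cases of (ii), and Lemma~\ref{lem:emme}(1) and (6) for (iii). The only (welcome) difference is that you handle the $k=0$ case of (ii) explicitly via (B2), which the paper leaves implicit in its assumption $k\geq j=1$.
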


\begin{proof}
  
  (i) $\bar\sigma ( a)[k]=\bar\tau^k_0( \bar\sigma( a))_{\pref 1}=_{(B5)}\overline{\sigma \circ \tau^k_0}( a)_{\pref 1} =_{(B7,\textrm{with}\;m=0,n=1)} \bar\tau^{\sigma_k}_0( a)_{\pref 1} =a[\sigma_k]$.

  \smallskip\noindent
  (ii) Let $n < k$. Then $(a \star_k b)[n] =\bar\tau^n_0 (a \star_k b)_{\pref 1}=_{L.\ref{lem:b8b12}(1)} (\bar\tau^n_0( a)\star_k b)_{\pref 1}=_{(B3)} \tau^n_0( a)_{\pref 1} =a[n]$. Let $n \geq k\geq j=1$. Then $(a \star_k b)[n] =\bar\tau^n_0 (a \star_k b)_{\pref 1} =_{L.\ref{lem:b8b12}(5)} \bar\tau^n_0 ( b)_{\pref 1} = b[n]$.

  \smallskip\noindent    
    (iii) That $a[n][0]=a[n]$ follows from Lemma \ref{lem:emme}(1).
    If $k>0$, then $\tau^k_0(0)=k\geq 1$, so that we can apply Lemma \ref{lem:emme}(5) as follows:  
    $a[n][k] = (\bar\tau^k_0[(\bar\tau^n_0(a)_{\pref 1}])_{\pref 1}=
     \bar\tau^k_0(1)_{\pref 1}=1[k]$.
     \end{proof}

\begin{example} \label{coordinates-canonical}
 Let $(\mathbf{Seq}(X),1^{\mathrm{seq}})$ be the pointed fully  canonical  merge algebra on set $X$, where $1^{\mathrm{seq}}=(\epsilon_0,\dots,\epsilon_n,\dots)$ acts as the coordinator. The $n$-coordinate of $a\in X^\omega$ is: $a[n] = (a_n, \epsilon_1,\dots,\epsilon_n,\dots)$.
 Here, the $n$-coordinate picks out the 
$n$-th element of the sequence 
$a$ while defaulting to the coordinator for all other positions.
\end{example}

The following  results  provide deeper insights into how coordinates interact with restrictions, merges and permutations.

\begin{lemma}\label{lem:B12} Let $(\mathbf A,1^\mathbf A)$ be a pointed merge algebra.
\begin{enumerate}
\item  $x_{\pref n+1}= (x\star_n \bar\tau^n_0(x[n]))_{\pref n+1}$.

\item  $x\star_{n} y= x[0]\star_1\bar\tau^1_0(x[1])\star_2\dots\star_{n-1}\bar\tau^{n-1}_0(x[n-1])\star_{n} y$.

\item If $x[i] = y[i]$ for every $0\leq i\leq n-1$, then $\forall z\: (x\star_n z=y\star_n z)$.\\
In particular, If $x,y\in A_{|n}$ and $x[i] = y[i]$ for every $0\leq i\leq n-1$, then $x=y$.

\item    If $0<n\leq k$, then $$\bar\tau^k_0 (\bar \tau^k_0(x)\star_n x)=\bar\tau^k_0(x)\star_n x.$$

\item  If $\sigma$ is a permutation of $n+k$ and $\sigma(n+i)=i$ for every $0\leq i\leq k-1$, then 
$$x\star_n \bar\sigma(y) \star_{n+k}z= x\star_n \bar\sigma(y\star_k z).$$
\end{enumerate}
\end{lemma}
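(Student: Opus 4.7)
The plan is to dispatch (1)--(3) as quick consequences of Lemma~\ref{lem:b8b12}(6) and the rectangular-band manipulations that (B1) and (B3) supply, handle the substance of (4) through an auxiliary identity I call Claim~A, and derive (5) from a single application of (B6) followed by collapse. For (1), I specialise Lemma~\ref{lem:b8b12}(6) to $y=z=1$ and invoke the definition $x[n]=\bar\tau^n_0(x)\star_1 1$: this immediately gives $x_{\pref n+1} = x\star_{n+1} 1 = x\star_n\bar\tau^n_0(x[n])\star_{n+1} 1 = (x\star_n\bar\tau^n_0(x[n]))_{\pref n+1}$. For (2), I induct on $n$: the base $n=1$ reduces to $x\star_1 y = (x\star_1 1)\star_1 y$, an instance of the rectangular-band law $u\star_1 v\star_1 w = u\star_1 w$ coming from (B1) and (B3); for the inductive step I use Lemma~\ref{lem:b8b12}(6) with $z=1$ to rewrite $x\star_{n+1} y$ as $x\star_n(\bar\tau^n_0(x[n])\star_{n+1} y)$ and then apply the induction hypothesis to this outer $\star_n$ with right-hand side $\bar\tau^n_0(x[n])\star_{n+1} y$. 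Part (3) is immediate from (2): if $x$ and $y$ agree on coordinates $[0],\dots,[n-1]$, their (2)-expansions of $x\star_n z$ and $y\star_n z$ coincide term-by-term; the ``in particular'' clause follows by taking $z=1$ and using $x_{\pref n}=x$, $y_{\pref n}=y$ to conclude $x=y$.

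The central difficulty is (4). I first establish Claim~A: for $1\le n\le k$, one has $\bar\tau^k_0(x)\star_n x = \bar\tau^k_0(x)\star_1 x$. To prove it, I apply (2) to the left side and use Lemma~\ref{lem:[k]}(i) together with the fact that $\tau^k_0$ fixes $\{1,\dots,k-1\}\supseteq\{1,\dots,n-1\}$ to obtain $\bar\tau^k_0(x)\star_n x = x[k]\star_1 R$, where $R=\bar\tau^1_0(x[1])\star_2\dots\star_{n-1}\bar\tau^{n-1}_0(x[n-1])\star_n x$; applying (2) to $x = x\star_n x$ (via (B1) idempotency) produces the same tail $R$, so $x = x[0]\star_1 R$. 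The rectangular-band law on $\star_1$ then yields $\bar\tau^k_0(x)\star_1 x = x[k]\star_1 x = x[k]\star_1(x[0]\star_1 R) = x[k]\star_1 R = \bar\tau^k_0(x)\star_n x$, proving Claim~A. Applying $\bar\tau^k_0$ to both sides of Claim~A reduces (4) to its $n=1$ case, so it suffices to show $\bar\tau^k_0(\bar\tau^k_0(x)\star_1 x) = \bar\tau^k_0(x)\star_1 x$. For this I invoke (B6) with $\sigma=\tau^k_0$ and parameter $k+1$; the selector $f_i$ equals $\bar\tau^k_0(x)$ only at $i=k$, and applying $\bar\tau^k_0$ entry-wise, using $\bar\tau^k_0\circ\bar\tau^k_0=\bar\iota$, produces $\bar\tau^k_0(x)\star_1\bar\tau^k_0(x)\star_2\dots\star_{k-1}\bar\tau^k_0(x)\star_k x\star_{k+1} x$. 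Idempotency and (B4) collapse this to $\bar\tau^k_0(x)\star_k x$, and a final use of Claim~A (with $n=k$) identifies this with $\bar\tau^k_0(x)\star_1 x$. Securing Claim~A is the main technical obstacle.

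For (5), I apply (B6) to $\bar\sigma(y\star_k z)$ with the minimal parameter $m=n+k$. The hypothesis $\sigma(n+j)=j$ for $0\le j<k$, combined with $\sigma$ being a permutation of $n+k$, forces $\sigma(\{0,\dots,n-1\})\subseteq\{k,\dots,n+k-1\}$; thus the (B6) selector $f_i=y$ iff $\sigma(i)<k$ equals $z$ on $[0,n)$ and $y$ on $[n,n+k)$. Consecutive identical $\bar\sigma(z)$ and $\bar\sigma(y)$ terms collapse via (B1) and (B4), yielding $\bar\sigma(y\star_k z) = \bar\sigma(z)\star_n\bar\sigma(y)\star_{n+k} z$. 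Prefixing by $x\star_n(\cdot)$ and using the rectangular-band identity $x\star_n\bar\sigma(z)\star_n\bar\sigma(y) = x\star_n\bar\sigma(y)$ then delivers the stated equality.
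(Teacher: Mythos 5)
Your proof is correct. Parts (1)--(3) follow essentially the paper's own route: (1) is the specialisation $y=z=1$ of Lemma~\ref{lem:b8b12}(6), (2) is the same induction (you work with general $y$ directly where the paper first reduces to $y=1$ via Lemma~\ref{lem:exall}, an immaterial difference), and (3) is read off from (2). The divergence is in (4) and (5). The paper handles both by a single uniform device: it verifies that the two sides have the same coordinates, invokes (3) to get agreement after $\star_{k+1}x$ (resp.\ $\star_{n+k}z$), and then pulls the permutation back through the merge using Lemma~\ref{lem:b8b12}(1) and idempotency. You instead compute. For (4) your Claim~A ($\bar\tau^k_0(x)\star_n x=\bar\tau^k_0(x)\star_1 x$ for $1\le n\le k$), proved by comparing the (2)-expansions of $\bar\tau^k_0(x)\star_n x$ and of $x=x\star_n x$ and absorbing via the rectangular-band law, is a nice self-contained reduction to the $n=1$ case, which you then settle by a direct (B6) expansion with $\sigma=\tau^k_0$; for (5) you expand $\bar\sigma(y\star_k z)$ via (B6) and collapse. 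Both arguments are sound --- I checked the selector computations in your (B6) applications and the collapsing steps against (B1), (B3), (B4). The paper's coordinate argument is shorter and treats (4) and (5) uniformly; yours is more explicit and has the side benefit of isolating Claim~A, which is an identity of independent interest. The only cosmetic omission is the degenerate case $n=0$ of (2), which is immediate from (B2).
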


\begin{proof}
 (1) It follows from Lemma \ref{lem:b8b12}(6) by putting $y=z=1$.
 
   \smallskip\noindent
 (2) 
 By Lemma \ref{lem:exall} it is sufficient to prove the case $y=1$. 
 The proof is by induction on $n$. The case $n=0$ is trivial.
\[
\begin{array}{lll}
x\star_{n+1} 1&  =_{(1)} & x \star_n\bar\tau^n_0(x[n])\star_{n+1} 1\\
&  =_{(B1)} &(x\star_n 1)\star_n\bar\tau^n_0(x[n])\star_{n+1} 1\\
&  =_{Ind} & (x[0]\star_1\bar\tau^1_0(x[1])\star_2\dots\star_{n-1}\bar\tau^{n-1}_0(x[n-1])\star_n 1)\star_n\bar\tau^n_0(x[n])\star_{n+1} 1  \\
&  =_{(\ref{ninetto})} & x[0]\star_1\bar\tau^1_0(x[1])\star_2\dots\star_{n-1}\bar\tau^{n-1}_0(x[n-1])\star_n 1\star_n\bar\tau^n_0(x[n])\star_{n+1} 1  \\
&   =_{(B3)}   & x[0]\star_1\bar\tau^1_0(x[1])\star_2\dots\star_n\bar\tau^n_0(x[n])\star_{n+1} 1.  
\end{array}
\]

\noindent
(3) By (2).

  \smallskip\noindent
(4) We first note that
$$(\bar\tau^k_0(x)\star_n x)[i] = \left\{
\begin{array}{ll}
x[k] &\text{if $i=0$ or $i=k$}\\
x[i] &\text{if $i\neq 0,k$}
\end{array} \right\}= (\bar\tau^k_0 (\bar \tau^k_0(x)\star_n x))[i].$$
Then we get
$$\begin{array}{lll}
\bar\tau^k_0(x)\star_n x& =_{(B1)} & \bar\tau^k_0(x)\star_n x\star_{k+1} x\\
& =_{\textrm{(3)}} &  \bar\tau^k_0 (\bar \tau^k_0(x)\star_n x) \star_{k+1} x\\
& =_{L.\ref{lem:b8b12}(1)}& \bar\tau^k_0 (\bar \tau^k_0(x)\star_n x \star_{k+1} x)\\
& =_{(B1)}&  \bar\tau^k_0 (\bar \tau^k_0(x)\star_n x).
\end{array}
$$

  \smallskip\noindent
(5) The proof is very similar to the one of (4). One first easily checks that $x\star_n \bar\sigma(y) \star_{n+k} z$ and
$x\star_n \bar\sigma(y\star_k z)$ have the same coordinates. Then we get
$$\begin{array}{lll}
x\star_n \bar\sigma(y) \star_{n+k} z& =_{(B1)} & x\star_n \bar\sigma(y) \star_{n+k} z\star_{n+k} z\\
 &=_{\textrm{(3)}} &  x\star_n \bar\sigma(y\star_k z)\star_{n+k} z \\
 &=_{L.\ref{lem:b8b12}(1)}& x\star_n \bar\sigma(y\star_k z\star_{n+k} z)\\
 &=_{(B1)}&  x\star_n \bar\sigma(y\star_k z).
\end{array}
$$
\end{proof}

\begin{lemma}\label{lem:1ksigma}
 Let $(\mathbf A,1^\mathbf A)$ be a pointed merge algebra such that $1[k]=1$ for every $k$. Then $\bar\sigma(1)=1$ for every  $\sigma\in S_\omega$.
\end{lemma}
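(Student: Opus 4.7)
The plan is to prove this by first reducing to a finite rank argument, then using the structural lemma that elements of bounded rank are determined by their coordinates (Lemma \ref{lem:B12}(3)).

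First I would fix an arbitrary $\sigma \in S_\omega$ and note that by the definition of $S_\omega$, there is some $n$ such that $\sigma$ is a permutation of $n$ (i.e., $\sigma$ fixes every $i \geq n$). The goal is then to show $\bar\sigma(1) \in A_{|n}$ and that $\bar\sigma(1)$ has the same coordinates as $1$ on positions $0, 1, \ldots, n-1$; the conclusion will then follow from Lemma \ref{lem:B12}(3).

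For the rank bound, I would use Lemma \ref{lem:b8b12}(1) applied to $x = y = 1$: since $\sigma$ is a permutation of $n$ and $1 \star_n 1 = 1$ (by idempotence (B1)), we get
\[
\bar\sigma(1) = \bar\sigma(1 \star_n 1) = \bar\sigma(1) \star_n 1 = \bar\sigma(1)_{\pref n},
\]
so $\bar\sigma(1) \in A_{|n}$. Of course $1 \in A_{|n}$ as well (it has rank $0$).

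For the coordinate comparison, I would use Lemma \ref{lem:[k]}(i), which gives $\bar\sigma(1)[i] = 1[\sigma_i]$ for each $i$. The hypothesis provides $1[k] = 1$ for every $k \geq 1$, and for $k = 0$ we have $1[0] = 1_{\pref 1} = 1$ automatically by Lemma \ref{lem:emme}(4). So $1[\sigma_i] = 1 = 1[i]$ for every $0 \leq i \leq n-1$. Applying Lemma \ref{lem:B12}(3) to the elements $\bar\sigma(1), 1 \in A_{|n}$ with matching coordinates on $0, \ldots, n-1$, we conclude $\bar\sigma(1) = 1$.

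I do not anticipate a real obstacle; the argument is essentially a direct assembly of previously established facts. The only subtle point is recognising that the hypothesis $1[k] = 1$ must be combined with the automatic fact $1[0] = 1$ to handle the case where $\sigma$ sends some index to $0$, and that the rank-boundedness of $\bar\sigma(1)$ must be established before Lemma \ref{lem:B12}(3) can be invoked.
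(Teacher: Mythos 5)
Your proof is correct, but it takes a genuinely different route from the paper's. The paper argues via the generators of $S_\omega$: it applies Lemma \ref{lem:B12}(4) (with $x=1$, $n=1$) to get $\bar\tau^k_0(1[k])=1[k]$, hence $\bar\tau^k_0(1)=1$ under the hypothesis, and then concludes for arbitrary $\sigma$ because the transpositions $\tau^k_0$ generate $S_\omega$ and $\sigma\mapsto\bar\sigma$ is an antimorphism by (B5). You instead handle an arbitrary $\sigma$ head-on: you bound the rank of $\bar\sigma(1)$ by $n$ using Lemma \ref{lem:b8b12}(1) together with idempotence, compute all coordinates via Lemma \ref{lem:[k]}(i) and the hypothesis, and then invoke the ``finite-rank extensionality'' statement of Lemma \ref{lem:B12}(3) to conclude $\bar\sigma(1)=1$. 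Each step checks out: $\bar\sigma(1)=\bar\sigma(1\star_n 1)=\bar\sigma(1)\star_n 1$ does place $\bar\sigma(1)$ in $A_{|n}$, and since $1\in A_{|0}\subseteq A_{|n}$ and both elements have all coordinates equal to $1$, Lemma \ref{lem:B12}(3) applies. The paper's argument is slightly leaner in that it avoids the rank bound and the coordinate-determination lemma, at the cost of relying on the group-theoretic generation fact; yours is more self-contained at the level of a single permutation and illustrates how the coordinate machinery determines finitely-ranked elements. (Your aside about needing $1[0]=1$ separately is unnecessary, since the hypothesis ``for every $k$'' already covers $k=0$, but it is harmless.)
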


\begin{proof} Suppose that $1[k]=1$ for every $k$. Since
 by Lemma \ref{lem:B12}(4) we have $\bar\tau^k_0 (1[k])=1[k]$,  we get $\bar\tau^k_0 (1)=1$ for every $k$.  The conclusion follows, because the permutations $\tau^k_0$ generate $S_\omega$.
\end{proof}

\begin{lemma}\label{lem:4.16}
 Let $(\mathbf A,1^\mathbf A)$ be a pointed  merge algebra. Then the function $(-)_\sq: A\to \mathrm{Seq}(A_{|1})$, defined by $a_\sq = (a[0],a[1],\dots)$, is a homomorphism from $\mathbf A$ to the full canonical merge algebra $\mathbf{Seq}(A_{|1})$.
\end{lemma}

\begin{proof}
 By Lemma \ref{lem:[k]}.
\end{proof}

From Lemmas \ref{lem:[k]} and \ref{lem:tra} it follows that $A_\sq$ is a trace on $A_{|1}$.

\subsection{Degenerate merge algebras}

In this section, we introduce the concepts of degenerate and faithful merge algebras.  

\begin{definition} A merge algebra  is
\begin{itemize}
\item[(i)]   \emph{degenerate} if $\bar\sigma(x)=x$ for every $\sigma \in S_\omega$ and $x\star_n y=y$ for every $n\in\omega$;
\item[(ii)] \emph{faithful} if $\bar\sigma\neq \bar\tau$ for every $\sigma\neq\tau\in S_\omega$.
\end{itemize}
\end{definition}

If $|X|\geq 2$, the fully canonical merge algebra $\mathbf{Seq}(X)$ is faithful. 
Note that a degenerate merge algebra is not faithful. Hence, in particular,
a degenerate merge algebra cannot be fully canonical.

\begin{lemma}\label{lem:deg} Let $\mathbf A$ be a merge algebra. The following conditions are equivalent:
\begin{enumerate}
\item $\mathbf A$ is degenerate.
\item $\mathbf A\models \bar\sigma(x)=x$, for every $\sigma \in S_\omega$.
\item $\mathbf A\models x \star_n y=y$, for every $n\in\omega$.
\end{enumerate}
If $\mathbf A$ is pointed with coordinator $1^\mathbf A$, then the above conditions are also equivalent to the following:
\begin{enumerate}
\item[(4)] $\mathbf A\models x[k]=1$, for every $k\in\omega$.
\item[(5)] $\mathbf A\models x\star_1 1=1$.
\end{enumerate}

\end{lemma}

\begin{proof} 
(1 $\Rightarrow$ 3) Trivial.

\smallskip\noindent
(3 $\Rightarrow$ 2) Let $\sigma$ be a finite permutation. Then there exists $n$ such that $\sigma$ is a permutation of $n$.
By Lemma \ref{lem:b8b12}(1) and the hypothesis we get the conclusion: $\bar\sigma(y)=_{hp}\bar\sigma(x\star_n y)=_{L.\ref{lem:b8b12}(1)}\bar\sigma(x)\star_n y= y$.

\smallskip\noindent
(2 $\Rightarrow$ 1) We prove that $x\star_j z=z$, for every $j$. Let $\sigma$ be any permutation satisfying the following condition: there exists $k\geq j$ such that  $\sigma_i\geq k$ for every
  $0\leq i<j$. Then 
  $y\star_j z=_{hp}\bar\sigma(y)\star_j z =_{L.\ref{lem:b8b12}(5)}\bar\sigma (x \star_k y) \star_j z=_{hp}(x \star_k y) \star_j z =_{(B3)} x \star_j z$, because $k\geq j$. By putting $y=z$, we obtain the conclusion $x \star_j z=z\star_j z=z$.

This concludes the proof that (1)-(3) are  equivalent. We now assume that $\mathbf A$ is pointed and proceed to show that (4) and (5) are also equivalent to (1)-(3).

\smallskip\noindent
(1 $\Rightarrow$ 5) Trivial.

\smallskip\noindent
(5 $\Rightarrow$ 4): By $x[k]=\bar\tau^k_0(x)\star_1 1 =_{(5)} 1$ for every $x$ and $k$.

\smallskip\noindent
(4 $\Rightarrow$ 3) All elements of $A$ have the same coordinates. By Lemma \ref{lem:B12}(3) we get 
\begin{equation}\label{eq1} \forall xyz.\ x\star_n z=y\star_n z. \end{equation}
Therefore, by putting $y,z=1$ in (\ref{eq1}) we obtain $x\star_n 1 = 1$ for every $x\in A$. Moreover, by putting $y=1$ in (\ref{eq1}) we obtain $x\star_n z = 1\star_n z =_{L.\ref{lem:emme}(4)}z$ for arbitrary $x$ and $z$, because $z\star_n 1 = 1$. 
\end{proof}

It is no surprise that the category of sets and the category of degenerate merge algebras are isomorphic.

\subsection{The abstract trace}
We abstract out the notion of trace as defined in Section \ref{sec:tra}.

Let $\mathbf A$ be a merge algebra. We define a binary relation $\sim_\omega$ on $A$ as follows:
$$a\sim_\omega b\ \text{if}\ \exists n.\ b\star_n a = b.$$

\begin{example} \label{abstract-concrete-trace}
 For a canonical merge algebra, we have $\sim_\omega\ =\ \equiv$, that is $a\sim_\omega b$ iff $|\{i : a_i\neq b_i\}|<\omega$.
\end{example}

\begin{lemma} \label{lem:sim}
\begin{enumerate}
\item The relation $\sim_\omega$ is an equivalence relation. 
\item If $c\star_n a = b$ for some $c$ and $n$, then $a\sim_\omega b$.
	\item $a\sim_\omega (b\star_k a)$ for every $k\in\omega$ and $a,b\in A$.
	\item $a\sim_\omega b \Rightarrow \bar\sigma(a)\sim_\omega \bar\sigma(b)$.
\end{enumerate}
\end{lemma}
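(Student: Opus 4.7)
The plan is to prove the four items in order, extracting along the way a useful small lemma that will also streamline the proof of (4): \emph{if $b \star_n a = b$, then $b \star_k a = b$ for every $k \geq n$}. For $k = n$ this is a tautology, and for $k > n$ it follows by using $b = b \star_n a$ and then rewriting $(b \star_n a)\star_k a = b \star_n (a \star_k a) = b \star_n a = b$ via (B4) (in the form ``$(x \star_n y)\star_k z = x \star_n (y\star_k z)$ when $n < k$'') together with idempotency (B1). With this tool in hand, the rest of (1) goes smoothly.

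For (1), reflexivity $a \sim_\omega a$ is just $a \star_0 a = a$, which is (B2). Symmetry is the most surprising part and the one to pause on: assuming $b \star_n a = b$, I compute
\[
a \star_n b \;=\; a \star_n (b \star_n a) \;=\; (a \star_n b) \star_n a \;=\; a \star_n a \;=\; a,
\]
where the second step uses associativity of the semigroup $(A,\star_n)$ from (B1), and the third uses the rectangular-band identity $(x \star_n y)\star_n z = x \star_n z$ (the instance $k=n$ of (B3)). Transitivity then follows from the extension lemma: if $b \star_n a = b$ and $c \star_m b = c$, take $k \geq \max(n,m)$, so that $b \star_k a = b$ and $c \star_k b = c$, and compute $c = c \star_k b = c \star_k (b \star_k a) = (c \star_k b) \star_k a = c \star_k a$ using associativity of $\star_k$.

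Items (2) and (3) are essentially immediate. For (2), from $c \star_n a = b$ one gets $b \star_n a = (c \star_n a)\star_n a = c \star_n a = b$ by the rectangular-band identity, so $a \sim_\omega b$. Item (3) is then the special case $c = b$, $n = k$ of (2).

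For (4), the assumption gives $b \star_n a = b$, and the goal is to find some $m$ with $\bar\sigma(b) \star_m \bar\sigma(a) = \bar\sigma(b)$. The key idea is to avoid invoking (B6) directly and instead choose $k$ large enough that $k \geq n$ \emph{and} $\sigma$ is a permutation of $k$. By the extension lemma above, $b \star_k a = b$, and since $\sigma$ is a permutation of $k$, Lemma \ref{lem:b8b12}(4) distributes $\bar\sigma$ over $\star_k$:
\[
\bar\sigma(b) \;=\; \bar\sigma(b \star_k a) \;=\; \bar\sigma(b) \star_k \bar\sigma(a),
\]
giving $\bar\sigma(a) \sim_\omega \bar\sigma(b)$. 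The main (mild) obstacle is recognising this simultaneous choice of $k$: big enough for $\bar\sigma$ to distribute, and at least $n$ so that the extension lemma applies. With that observation the argument collapses to a one-line calculation.
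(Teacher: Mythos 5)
Your proof is correct and follows essentially the same route as the paper's: the same computations for reflexivity, symmetry, items (2) and (3), and the same use of Lemma \ref{lem:b8b12}(4) with a sufficiently large index for item (4). The only cosmetic difference is that you isolate the index-raising fact ($b\star_n a=b$ implies $b\star_k a=b$ for $k\geq n$) up front --- the paper proves exactly this inside its argument for (4) --- and use it to handle transitivity uniformly, where the paper instead splits into the cases $k<n$ (via (B4)) and $k\geq n$ (via (B3)).
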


\begin{proof}  (1)
 (Reflexivity): $a\star_n a =_{(B1)} a$.

\smallskip\noindent
(Symmetry): If  $b\star_n a = b$,  then $a\star_n b=a\star_n b\star_n a=
a\star_n a=
a$. Therefore, $b\sim_\omega a$.
 
 \smallskip\noindent
(Transitivity) Let $b\star_n a = b$ and $c\star_k b = c$. Then, we have two cases:
\begin{itemize}

\item ($k< n$):  $c\star_n a = c\star_k b\star_n a=
c\star_k (b\star_n a)= c\star_k b=c$.

\item ($k\geq n$): $c =c\star_k b = c\star_k (b\star_n a)=_{(B3)} c \star_k a$.
\end{itemize}

\noindent
(2) $b\star_n a = c\star_n a\star_n a =
c\star_n a = b$.

\smallskip\noindent
(3) $ b\star_k a\star_k a=
b\star_k a$.

\smallskip\noindent
(4) Let $b\star_n a = b$. Then $b\star_{n+k} a=b$ for all $k\geq 0$. Indeed, we have
$b\star_{n+k} a= b\star_n a\star_{n+k} a=
b\star_n a = b$. Let $\sigma$ be a permutation of $m$ and $j>n,m$. Therefore
$\bar \sigma(b)\star_j \bar\sigma(a)=_{L.\ref{lem:b8b12}(4)} \bar\sigma(b\star_j a)=
\bar\sigma(b)$.
\end{proof}

\begin{definition}
 A subset of $A$ is called an \emph{abstract trace} of $\mathbf A$ if it is a union of $\sim_\omega$-equivalence classes.
\end{definition}

\begin{lemma}\label{lem:at}
(1) $A_{|\omega} = [1]_{\sim_\omega}$.

(2) Every abstract trace $Y$ of a merge algebra $\mathbf A$ is a merge subalgebra of $\mathbf A$ such that
  $A\star_n Y\subseteq Y$ for every $n$.
\end{lemma}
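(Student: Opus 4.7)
The proof should proceed by direct appeal to the preceding lemmas, after unfolding definitions.

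For part (1), I would observe that $a \sim_\omega 1$ is, by definition of $\sim_\omega$, the existence of some $n$ with $1 \star_n a = 1$, i.e.\ $a_{\suf n} = 1$. Lemma \ref{lem:emme}(5) states that $a_{\suf n} = 1$ is equivalent to $a_{\pref n} = a$, which is exactly the condition $a \in A_{|n}$. Hence $[1]_{\sim_\omega} = \bigcup_{n \geq 0} A_{|n} = A_{|\omega}$.

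For part (2), I need to verify three things: that $X$ is closed under each $\star_n$, closed under each $\bar\sigma$, and satisfies the external absorption $A \star_n X \subseteq X$. The first and third conditions follow immediately from Lemma \ref{lem:sim}(3): for any $x \in A$ (or $x \in X$) and any $y \in X$, we have $y \sim_\omega (x \star_n y)$, so $x \star_n y \in X$ because $X$ is a union of $\sim_\omega$-classes.

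For closure under permutations, given $x \in X$ and $\sigma \in S_\omega$, I would pick $n$ large enough that $\sigma$ is a permutation of $n$. Then Lemma \ref{lem:b8b12}(1) combined with idempotency (B1) yields $\bar\sigma(x) \star_n x = \bar\sigma(x \star_n x) = \bar\sigma(x)$, and Lemma \ref{lem:sim}(2) (applied with $c = \bar\sigma(x)$, $a = x$, $b = \bar\sigma(x)$) gives $x \sim_\omega \bar\sigma(x)$, so $\bar\sigma(x) \in X$.

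The only slightly subtle point is this last step: one has to spot that the right way to witness $x \sim_\omega \bar\sigma(x)$ is via the equation $\bar\sigma(x) \star_n x = \bar\sigma(x)$, which is exactly what Lemma \ref{lem:b8b12}(1) delivers once $n$ is chosen large enough for $\sigma$ to act trivially beyond $n$. Everything else is essentially bookkeeping.
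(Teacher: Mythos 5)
Your proof is correct and follows essentially the same route as the paper: part (1) unfolds the definition of $\sim_\omega$ (the paper uses the symmetric form $1\sim_\omega x$ directly, while you pass through Lemma \ref{lem:emme}(5), a negligible difference), and part (2) uses Lemma \ref{lem:sim}(3) for the $\star_n$-closure and absorption, and the identity $\bar\sigma(x)\star_n x=\bar\sigma(x\star_n x)=\bar\sigma(x)$ for closure under permutations, exactly as in the paper.
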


\begin{proof} 
(1) We have  $1\sim_\omega x$ iff $\exists n.\ x_{\pref n}= x$.

(2) Let $a\in A$ an $b\in Y$. By Lemma \ref{lem:sim}(3), we have
$a\star_n b \sim_\omega b$, so that $a\star_n b\in Y$. Moreover, if $\sigma$ is a permutation of $n$, then
$\bar\sigma(a) \star_n a= \bar\sigma(a\star_n a)= \bar\sigma(a)$; hence, $a\sim_\omega \bar\sigma(a)$.
\end{proof}

\section{Varieties  of m-monoids}\label{sec:var}

In this section we introduce {\em m-monoids}, {\em cm-monoids} and  {\em am-monoids}.

\subsection{m-Monoids}

We endow merge algebras with a monoid structure.

\begin{definition} 
A \emph{merge-monoid (m-monoid, for short)}  is a pair $\mathbf M=(\mathbf M_0,\mathbf M_1)$, where $\mathbf M_0=(M,\cdot^{\mathbf M},1^{\mathbf M})$ is a monoid  and $\mathbf M_1=(M,\bar\sigma^{\mathbf M}, \star_n^{\mathbf M})$ is a merge algebra, satisfying the following identity for every $n\in \omega$:
\begin{itemize}
\item[(RD)] Right Distributivity:  $(x\star_n y)\cdot z= (x\cdot z) \star_n( y \cdot z)$.
\end{itemize}
\end{definition}

The algebra $\mathbf M_0$  is called the \emph{multiplicative reduct of $\mathbf M$}, while
 $\mathbf M_1$ is called the \emph{merge reduct of $\mathbf M$}.

The merge reduct $\mathbf M_1$ of an m-monoid $\mathbf M$ will be implicitly considered as a pointed merge algebra in which the coordinator is the identity  $1^{\mathbf M}$ of the monoid.

\begin{example}\label{exa:funct}  (Functions from $X^\omega$ to $X^\omega$) 
  Let $X$ be a set and $F_X^{(\omega)}$ be the set  of all endofunctions of $X^\omega$. The set $F_X^{(\omega)}$ forms the universe of an m-monoid $\mathbf F_X^{(\omega)}$, where $(\mathbf F_X^{(\omega)})_0$ is the monoid of all endofunctions of $X^\omega$ and $(\mathbf F_X^{(\omega)})_1$ is isomorphic to the fully canonical merge algebra on the set $\{\psi:X^\omega\to X\ |\ \psi\in X^{X^\omega} \}$.
Indeed, a function $\varphi:X^{\omega}\to X^{\omega}$ is  identified with the $\omega$-sequence $(\varphi_0,\ldots,\varphi_n,\ldots)$, where $\varphi_i=\e_i^X\circ\varphi $.  
  \end{example}

We say that an m-monoid $\mathbf M$ is \emph{commutative} if its multiplicative reduct $\mathbf M_0$ is commutative, and that it is \emph{degenerate, canonical, fully canonical or faithful}  if its merge reduct  $\mathbf M_1$ has the corresponding property. Additionally, we say  that $x\in M$ has rank $n$ if it has rank $n$ in the merge reduct of $\mathbf M$.

Just as a degenerate merge algebra is simply a set, a degenerate m-monoid is simply a monoid. In fact, the categories of monoids and degenerate m-monoids are isomorphic.

We now prove some simple properties of m-monoids.

\begin{lemma}\label{lem:emme2} Let $\mathbf M$ be an m-monoid.
The following properties hold:
\begin{itemize}
\item[(1)] $( x \cdot y)_{\pref n}= (x_{\pref n} \cdot y)_{\pref n}$.
\item[(2)] $( x \cdot y_{\pref n})_{\pref n}= x_{\pref n} \cdot y_{\pref n}$.
\item[(3)] $x\sim_\omega y \Rightarrow (x\cdot z)\sim_\omega (y\cdot z)$, for every $z$. 
\end{itemize}
\end{lemma}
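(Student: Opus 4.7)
The plan is to derive all three items as short consequences of right distributivity (L1) and the rectangular band identities obtained from (B1) and (B3) (in particular $a\star_n b\star_n c=a\star_n c$, which follows from (B3) by taking $k=n$ in either clause). None of the identities look delicate, so I expect no real obstacle; the main thing is to pick the right order so that (2) piggy-backs on (1).

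For (1), I would rewrite both sides with the definition $u_{\pref n}=u\star_n 1$ and apply (L1) to the product $x_{\pref n}\cdot y=(x\star_n 1)\cdot y=(x\cdot y)\star_n (1\cdot y)=(x\cdot y)\star_n y$. Then
$$(x_{\pref n}\cdot y)_{\pref n}=\bigl((x\cdot y)\star_n y\bigr)\star_n 1 = (x\cdot y)\star_n 1 = (x\cdot y)_{\pref n},$$
where the middle equality is the rectangular band identity $a\star_n b\star_n c=a\star_n c$ from (B1)+(B3).

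For (2), I would first use (1) in the direction $(x\cdot y_{\pref n})_{\pref n}=(x_{\pref n}\cdot y_{\pref n})_{\pref n}$ so that only the pointed product $x_{\pref n}\cdot y_{\pref n}$ needs to be understood. Expanding with (L1) gives $x_{\pref n}\cdot y_{\pref n}=(x\cdot y_{\pref n})\star_n y_{\pref n}$. Since $y_{\pref n}=y\star_n 1$, the second clause of (B3) with $k=n$ yields $(x\cdot y_{\pref n})\star_n(y\star_n 1)=(x\cdot y_{\pref n})\star_n 1=(x\cdot y_{\pref n})_{\pref n}$, which is the left-hand side.

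For (3), the definition of $\sim_\omega$ gives some $n$ with $y\star_n x=y$; multiplying on the right by $z$ and applying (L1) yields $y\cdot z=(y\star_n x)\cdot z=(y\cdot z)\star_n (x\cdot z)$, which is exactly the witness that $(x\cdot z)\sim_\omega(y\cdot z)$. This last item is an immediate one-line argument, while (1) and (2) are the only places where the interplay between the monoid and the rectangular-band reducts really intervenes.
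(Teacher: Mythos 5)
Your proof is correct and follows essentially the same route as the paper's: (1) and (2) are the same one-line computations via (L1) followed by the rectangular-band/(B3) collapse of the iterated $\star_n$, and (3) is the identical application of (L1) to the witness $y\star_n x=y$. The only cosmetic difference is that in (2) your preliminary appeal to (1) is redundant, since the chain $x_{\pref n}\cdot y_{\pref n}=(x\cdot y_{\pref n})\star_n y_{\pref n}=(x\cdot y_{\pref n})_{\pref n}$ you then give already proves the claim directly, exactly as the paper does.
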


\begin{proof} (1) $(x_{\pref n} \cdot y)_{\pref n} = ((x\star_n 1)\cdot y)\star_n 1 =_{(RD)} ((x\cdot y) \star_n (1\cdot y))\star_n 1=(x \cdot y)\star_n y\star_n 1=(x \cdot y)\star_n 1=( x \cdot y)_{\pref n}$.

\smallskip\noindent
 (2)  $x_{\pref n} \cdot y_{\pref n} =(x\star_n 1)\cdot (y\star_n 1)=_{(RD)} (x \cdot (y\star_n 1))\star_n (y\star_n 1) =_{(B1)} (x \cdot (y\star_n 1))\star_n  1= (x \cdot y_{\pref n})_{\pref n}$.

  \smallskip\noindent
  (3) Let $x\sim_\omega y$. Then, by definition, there exists some $n$ such that $y\star_n x = y$. We have 
$(y\cdot z)\star_n (x\cdot z)= (y\star_n x)\cdot z=y\cdot z$. 
Thus, $(x\cdot z)\sim_\omega (y\cdot z)$.
 \end{proof}

\begin{definition} \label{def:lf} 
An m-monoid $\mathbf M$ is said to be \emph{finitely ranked} if $M=M_{|\omega}$.
\end{definition}

It is not difficult to verify that $M_{|\omega}$ is an m-submonoid of $\mathbf M$.

\subsection*{Extensional  m-monoids}

We now define extensional m-monoids, providing examples of both extensional and non-extensional m-monoids. Additionally, we establish a  connection between extensional m-monoids and canonical m-monoids, proving that an m-monoid is extensional if and only if it is (up to isomorphism) canonical. 

\begin{definition} An  m-monoid $\mathbf M$ is called \emph{extensional}  if the following implication holds for every $ x,y\in M$:
$$(\forall n\in\omega.\ x[n]=y[n]) \Rightarrow x=y.$$
\end{definition}

\smallskip
\begin{example} \label{exa:dsum}
(1) Any  nontrivial degenerate m-monoid is not  extensional.

(2) The m-monoid $\mathbf F_X^{(\omega)}$ from Example \ref{exa:funct} is extensional. 

(3)   We provide an example of a non-degenerate, non-extensional m-monoid.
  Let $\mathbf M$ be an m-monoid, and $M\oplus M=\{(x,i):x\in M, i=0,1\}$ be 
  the carrier of the algebra $\mathbf M\oplus \mathbf M$ in the type of m-monoids, where:
  $$(x,i)\star^\oplus_n (y,j) =(x\star_n y, j);\quad  \overline{\sigma}^\oplus(x,i)=(\overline{\sigma}(x),i);\quad (x,i)\cdot^\oplus(y,j)=(x\cdot y,(i+j)\text{ mod } 2);\quad 1^\oplus=(1,0).$$
  It is easy to see that 
  $\mathbf M\oplus \mathbf M$ is an m-monoid, which is never extensional, since for all $x\in M$ the elements $(x,0)$ and $(x,1)$ have the same coordinates.
\end{example}

\begin{lemma}\label{lem:coto} 
\begin{enumerate}
\item   An m-monoid is extensional iff  it is  (up to isomorphism) canonical.
\item Every finitely ranked m-monoid is extensional.
\end{enumerate}

\end{lemma}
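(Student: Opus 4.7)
The plan is to dispatch (2) first, as it reduces immediately to the ``in particular'' clause of Lemma~\ref{lem:B12}(3): if $\mathbf M$ is finitely ranked and $x[n] = y[n]$ for every $n$, then $M = M_{|\omega}$ produces an $n$ with $x, y \in M_{|n}$, and that lemma yields $x = y$ directly.

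For (1), I would treat the two directions separately. The implication ``canonical $\Rightarrow$ extensional'' is handled by a direct computation inside a fully canonical merge algebra. Assume, up to isomorphism, that $\mathbf M_1$ is a merge subalgebra of $\mathbf{Seq}(A)$ for some set $A$, and write $c = (c_0, c_1, \ldots)$ for the element of $A^\omega$ identified with $1^{\mathbf M}$. Unfolding $a[n] = \bar\tau^n_0(a) \star_1 c$ inside $\mathbf{Seq}(A)$, and using that $\tau^n_0$ swaps positions $0$ and $n$, one reads off the explicit formula $a[n] = (a_n, c_1, c_2, \ldots)$. Hence $a[n] = b[n]$ for every $n$ already forces $a_n = b_n$ at every position, so $a = b$ in $A^\omega$. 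Note that $c$ need not coincide with the particular coordinator chosen in Example~\ref{coordinates-canonical}: its entries $c_1, c_2, \ldots$ only occupy positions from $1$ onwards and play no role in reconstructing the $a_n$.

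For the converse direction of (1), Corollary~\ref{lem:4.16} already supplies a merge-algebra homomorphism $(-)_\sq \colon \mathbf M_1 \to \mathbf{Seq}(M_{|1})$ sending $a$ to $(a[0], a[1], \ldots)$. Extensionality is literally the statement that this map is injective, so its image is an isomorphic copy of $\mathbf M_1$ as a submerge algebra of the fully canonical merge algebra on $M_{|1}$. Transporting the monoid operation along this isomorphism produces an m-monoid isomorphic to $\mathbf M$ whose merge reduct is, on the nose, a subalgebra of $\mathbf{Seq}(M_{|1})$, which is precisely what is required for $\mathbf M$ to be canonical up to isomorphism. I do not foresee a serious obstacle; the only delicate point is the bookkeeping in the $\Leftarrow$ direction of (1), where one must remember that ``canonical'' only demands an embedding as a merge algebra and imposes no constraint on where the monoid unit $1^{\mathbf M}$ is sent.
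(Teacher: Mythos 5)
Your proposal is correct and follows essentially the same route as the paper: part (2) via Lemma \ref{lem:B12}(3), the canonical-implies-extensional direction by the explicit computation $a[n]=(a_n,c_1,c_2,\ldots)$, and the converse by using Corollary \ref{lem:4.16} together with transport of the (well-defined, by extensionality) multiplication onto $M_{\sq}$. Your remark that canonicity places no constraint on where $1^{\mathbf M}$ lands in $A^\omega$ is accurate and does not affect the argument.
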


\begin{proof} (1) Let $\mathbf M$ be an m-monoid, where $1^\mathbf M=(1_0,\ldots,1_n,\ldots)$.

\noindent  ($\Leftarrow$) 
For all $x\in M$, by canonicity we have $x[n] = (x_n,1_1,\dots,1_n,\dots)$.
  If  $x[n] = y[n]$ for all $n$, it follows that $x_n=y_n$ for all $n$, and hence $x=y$. Thus,  $\mathbf M$ is extensional.
  
 \smallskip\noindent
($\Rightarrow$) We define a canonical m-monoid $\mathbf M_{\sq}$ with universe $M_{\sq}=\{x_\sq: x\in M\}$, where $x_\sq=(x[i]: i\in\omega)$ (cf. Section \ref{sec:coord}).
By Lemma \ref{lem:4.16}, $M_{\sq}$ forms the universe of a canonical merge algebra on $M_{|1}$. Next, we introduce a binary operation on $M_{\sq}$ defined as
$x_{\sq}\cdot^{\sq} y_{\sq} = (x\cdot^\mathbf M y)_{\sq}=((x\cdot^\mathbf M y)[i]\ |\ i\in\omega)$.
This operation is well defined because $\mathbf M$ is extensional.  
Using Lemma \ref{lem:4.16}, the definition of $\cdot^\sq$ and the extensionality of $\mathbf M$, it follows that the map $x\mapsto x_{\sq}$ is an isomorphism of m-monoids from $\mathbf M$ to the canonical m-monoid $\mathbf M_{\sq}$.

(2) By Lemma \ref{lem:B12}(3).
\end{proof}

\subsection{The variety of cm-monoids}
We now  introduce the variety of cm-monoids, which is a variety of finitary algebras, generalising  monoids, clone algebras and  free algebras.

\begin{definition}
 A \emph{cm-monoid}  is an m-monoid satisfying the following identity   for every $\sigma \in S_\omega$: 
\begin{itemize}
\item[(LS)] $ \bar\sigma(x\cdot y) =\bar\sigma(x)\cdot y$.
\end{itemize}
\end{definition}

\begin{remark} \label{rem-L2-simplified}
We observe that (LS) is equivalent to 
$ \bar\sigma(x)= \bar\sigma(1)\cdot x$.
\end{remark}

\begin{remark} Note that the m-monoid $\mathbf{F}_X^{(\omega)}$ of Example \ref{exa:funct} is a cm-monoid.
  \end{remark}

\begin{lemma}\label{lem:[k]bis} Let  $\mathbf M$ be a cm-monoid. Then we have:
 \begin{enumerate}
 \item $(x\cdot y)[n] =(x[n]\cdot y)_{\pref 1}$.
 \item $x[n]=(1[n]\cdot x)_{\pref 1}$. 
\item $x[i] = y[j] \Rightarrow (x\cdot z)[i] = (y\cdot z)[j]$.
\item  The set $\bar S_\omega=\{\bar\sigma(1): \sigma\in S_\omega\}$  is a subgroup of the monoid $\mathbf M_0$.
\end{enumerate}
\end{lemma}

\begin{proof}
 (1) $(x \cdot y )[n] =\bar\tau^n_0(x \cdot y)_{\pref 1}=_{(LS)} (\bar\tau^n_0( x) \cdot y)_{\pref 1}=_{L.\ref{lem:emme2}(1)} (\bar\tau^n_0( x)_{\pref 1} \cdot y)_{\pref 1}= (x[n] \cdot y)_{\pref 1} $.
 
 \smallskip\noindent
(2)-(3) By (1).
 
 \smallskip\noindent
 (4) $\bar\sigma(1)\cdot \bar\tau(1) =_{(LS)}\bar\sigma(1\cdot\bar\tau(1)) = \bar\sigma(\bar\tau(1))=\overline{\tau\circ\sigma}(1)$. Also, taking suitable instantiations for $\sigma$ and $\tau$, we get that
 $\overline{\sigma^{-1}}(1)$ is the inverse of  $\bar\sigma(1)$.
  \end{proof}

\begin{lemma}\label{lem:faith1}
Let $\mathbf M$ be a non-degenerate cm-monoid. Then the following conditions hold:
\begin{enumerate}
\item $1[k]\neq 1[j]$ for every $k\neq j$.
\item $\mathbf M$ is faithful.
\item $\mathbf M$ is noncommutative.
\end{enumerate}
\end{lemma}

\begin{proof}
 (1) Assume that there exist $k\neq j$ such that $1[k]=1[j]$. Let $x$ be an arbitrary element of $M$.
Since $x[k]=(1[k]\cdot x)_{\pref 1}$ by Lemma \ref{lem:[k]bis}(2), we have $x[k]=x[j]$. 
 We now show that all the coordinates of $x$ are equal: if $i\neq k,j$, then $x[j]=x[\tau^i_k(j)]=_{L.\ref{lem:[k]}(i)}(\bar\tau^i_k(x))[j]= (\bar\tau^i_k(x))[k]=_{L.\ref{lem:[k]}(i)}x[\tau^i_k(k)]=x[i]$.
 It follows  that $1[k]=1$ for every $k$, because $1[0]=1$. 
 From Lemma \ref{lem:deg} it follows that $\mathbf M$ is degenerate and we get a contradiction.
 
  \smallskip\noindent
 (2)  If $\sigma\neq \tau$, then there exists $i$ such that $\sigma_i\neq\tau_i$. In such a case, we have $\bar\sigma(1)[i] =_{L.\ref{lem:[k]}(i)} 1[\sigma_i]\neq_{(1)} 1[\tau_i] =_{L.\ref{lem:[k]}(i)}\bar\tau(1)[i]$. Therefore, $\bar\sigma(1)\neq \bar\tau(1)$.
 
   \smallskip\noindent
 (3)  Let $\sigma \neq \tau$ be finite permutations such that $\sigma\circ\tau\neq \tau\circ\sigma$. Suppose that 
$\mathbf M$ is commutative. 
Then we have: $\overline{\tau\circ\sigma}(1)=_{(B5)}\bar\sigma( \bar\tau(1))=_{(LS)} \bar\sigma(1)\cdot \bar\tau(1)=_{\mathrm{comm}} \bar\tau(1)\cdot \bar\sigma(1)=\bar\tau(\bar\sigma(1))= \overline{\sigma\circ\tau}(1)$. It follows that  $\overline{\tau\circ\sigma}=\overline{\sigma\circ\tau}$, because $\overline{\tau\circ\sigma}(x)=\overline{\tau\circ\sigma}(1)\cdot x=\overline{\sigma\circ\tau}(1)\cdot x=\overline{\sigma\circ\tau}(x)$. 
Therefore, $\mathbf M$ is not faithful, contradicting (2).
\end{proof}

\begin{proposition}\label{prop:loc=str}
 Let $\mathbf M$ be an extensional m-monoid. Then $\mathbf M$  is a cm-monoid if and only if 
 $$(*)\quad \forall x,y,z\in M\: \forall i,j\in \omega\:(x[i]=y[j] \Rightarrow (x\cdot z)[i]=(y\cdot z)[j]).$$
\end{proposition}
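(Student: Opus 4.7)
The plan is to prove the two directions separately. The forward direction ($\Rightarrow$) is essentially free: it is exactly the content of Lemma \ref{lem:[k]bis}(3), which was established using axiom (L2), so no additional work is needed beyond citing it. The backward direction ($\Leftarrow$) is where the extensionality hypothesis does the work.

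For ($\Leftarrow$), assume (*) holds. To verify (L2), we must show that $\bar\sigma(x)\cdot y=\bar\sigma(x\cdot y)$ for every $\sigma\in S_\omega$ and all $x,y\in M$. Since $\mathbf M$ is extensional, it is enough to prove that the two sides have the same coordinates, i.e., that $(\bar\sigma(x)\cdot y)[n]=\bar\sigma(x\cdot y)[n]$ for every $n\in\omega$. The right-hand side simplifies immediately using Lemma \ref{lem:[k]}(i) applied to the element $x\cdot y$, giving $\bar\sigma(x\cdot y)[n]=(x\cdot y)[\sigma_n]$. So it only remains to identify the left-hand side with $(x\cdot y)[\sigma_n]$.

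This is exactly where hypothesis (*) is designed to apply. By a second use of Lemma \ref{lem:[k]}(i), we have $\bar\sigma(x)[n]=x[\sigma_n]$, that is, the elements $\bar\sigma(x)$ and $x$ agree at indices $n$ and $\sigma_n$ respectively. Applying (*) with $\bar\sigma(x)$ and $x$ in the roles of the first two elements, $y$ in the role of $z$, and indices $i=n$, $j=\sigma_n$, we conclude $(\bar\sigma(x)\cdot y)[n]=(x\cdot y)[\sigma_n]$. Combining this with the identity above yields the required coordinatewise equality, and extensionality delivers (L2).

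The only genuinely delicate point is setting up the correct instantiation of (*); once one observes that $\bar\sigma(x)$ is, coordinatewise, simply a permutation of the coordinates of $x$ (by Lemma \ref{lem:[k]}(i)), the condition (*) transports this relationship across multiplication by $y$, which is precisely what (L2) demands. Extensionality then lifts the coordinatewise identity to a genuine equation in $\mathbf M$. No case analysis or further structural lemmas are needed.
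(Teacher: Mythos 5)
Your proof is correct and follows essentially the same strategy as the paper's: both directions hinge on combining $(*)$ with the fact that $\bar\sigma$ acts on coordinates by reindexing (Lemma \ref{lem:[k]}(i)) and then invoking extensionality to lift the coordinatewise identity to a genuine equation. The only differences are cosmetic — you cite Lemma \ref{lem:[k]bis}(3) for the forward direction instead of recomputing it, and you verify (L2) abstractly via coordinates applied to the pair $(\bar\sigma(x),x)$ rather than first reducing to the canonical representation via Lemma \ref{lem:coto} and arguing on concrete sequences with the pair $(\bar\sigma(1),1)$; both routes are valid and yours is, if anything, slightly more economical.
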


\begin{proof} By Lemma \ref{lem:coto} we know that $\mathbf M$ is (up to isomorphism) canonical since $\mathbf M$ is assumed to be extensional.

\smallskip\noindent
  ($\Leftarrow$) Let $x\in M$ and $1=(1_0,1_1,\dots,1_n,\dots)$. We have to show that  $\bar\sigma(x) = \bar\sigma(1)\cdot x$ (cf. Remark \ref{rem-L2-simplified}).
 Since $1_{\sigma(k)}=\bar\sigma(1)_k$, we have: 
 $\bar\sigma(x)_k=x_{\sigma_k} = (1\cdot x)_{\sigma_k} =_{(*)}(\bar\sigma(1)\cdot x)_k$ for every $k$. Therefore, we get the conclusion
since $\mathbf M$ is  canonical. 
 
 \smallskip\noindent
 ($\Rightarrow$) Let $x,y\in M$ such that $x_i=y_j$. We prove that $(x\cdot z)_i=(y\cdot z)_j$ for every $z$.
 Since $\mathbf M$ is canonical,  we have
  $(1\star_i x)\star_{i+1} 1  = (1\star_i \bar\tau^i_j(y))\star_{i+1} 1$. If we multiply by $z$ on the right and apply (twice) the right distributivity, we get $(z\star_i (x\cdot z))\star_{i+1} z = (z\star_i (\bar\tau^i_j(y)\cdot z))\star_{i+1} z$.
 By again applying canonicity we have $(x\cdot z)_i = (\bar\tau^i_j(y)\cdot z)_i$. We obtain the conclusion if we prove that
 $(\bar\tau^i_j(y)\cdot z)_i = (y\cdot z)_j$. Indeed, since $\mathbf M$ satisfies (LS),
 we have 
$(\bar\tau^i_j(y)\cdot z)_i = (\bar\tau^i_j(y\cdot z))_i = (y\cdot z)_j$.
This completes the proof.
\end{proof}

\begin{example} 
 Let $\mathbf A$ be a merge algebra. Then the pair constituted by the monoid  $(A^A, \circ, \mathrm{Id})$ of all endofunctions of $A$ and the merge algebra $\mathbf A^A$, product of $A$-copies of $\mathbf A$, is  a cm-monoid denoted by 
 $End(\mathbf A)$. The cm-monoid $End(\mathbf A)$  will be called the \emph{endofunction  cm-monoid of the merge algebra $\mathbf A$}.
 The endofunction cm-monoid $End(\mathbf{Seq}(X))$ of a full canonical merge algebra $\mathbf{Seq}(X)$ is isomorphic to the cm-monoid $\mathbf F_X^{(\omega)}$ of all endomaps of $X^\omega$ (see Example \ref{exa:funct}).
\end{example}

\subsection{The variety of am-monoids}
In this section we introduce another class of m-monoids, which we call \emph{arithmetical m-monoids} (am-monoids, for short). The introduction of this class of m-monoids is justified by Example \ref{exa:ar} below. 

\begin{definition}
An \emph{am-monoid}  is an m-monoid satisfying the following identity   for every $\sigma \in S_\omega$: 
\begin{itemize}
\item[(A1)] $\bar\sigma(x\cdot y) =\bar\sigma(x)\cdot \bar\sigma(y)$.
\end{itemize}
An am-monoid  is \emph{strong} if it satisfies the following further identity:
\begin{itemize}
\item[(A2)] $(x\cdot y)\star_n 1= (x\star_n 1)\cdot (y\star_n 1)$.
\end{itemize}
\end{definition}

\begin{lemma}\label{lem:am} Let $\mathbf M$ be an am-monoid. Then the following properties hold:
 \begin{itemize}
 \item[(i)] $\bar\sigma(1)=1$ for every permutation $\sigma$.
  \item[(ii)] The set $\{\bar\sigma: \sigma\in S_\omega\}$ is a group of automorphisms of the multiplicative reduct $\mathbf M_0$.
 \item[(iii)] $(x\cdot y)[n] =(\bar\tau^n_0(x)\cdot \bar\tau^n_0(y))_{\pref 1}=(x[n]\cdot \bar\tau^n_0(y))_{\pref 1}$.
  \item[(iv)] If $\mathbf M$ is strong, then $(x\cdot y)[n] =x[n]\cdot y[n]$.
\end{itemize}
\end{lemma}

\begin{proof}
(i) For every $x\in M$, we have $\bar\sigma(x)= \bar\sigma(1\cdot x)=\bar\sigma(1)\cdot \bar\sigma(x)$ and $\bar\sigma(x)= \bar\sigma(x\cdot 1)=\bar\sigma(x)\cdot \bar\sigma(1)$. Since $\bar\sigma$ is bijective,
we derive that $y\cdot\bar\sigma(1)=\bar\sigma(1)\cdot y=y$ for every $y\in M$.
Since the unit is unique, we obtain $\bar\sigma(1)=1$ for every $\sigma$.

\smallskip\noindent
(ii) By (i) and (A1).

\smallskip\noindent
 (iii)
  $(x\cdot y)[n] =\bar\tau^n_0(x\cdot y)\star_1 1 =_{(A1)}(\bar\tau^n_0(x)\cdot \bar\tau^n_0(y))\star_1 1 =_{L.\ref{lem:emme2}(1)} ((\bar\tau^n_0(x)\star_1 1)\cdot \bar\tau^n_0(y))\star_1 1=(x[n]\cdot \bar\tau^n_0(y))_{\pref 1}$.
  
  \smallskip\noindent
  (iv) $(x\cdot y)[n] = \bar\tau^n_0(x\cdot y)\star_1 1 =_{(A1)}(\bar\tau^n_0(x)\cdot \bar\tau^n_0(y))\star_1 1=_{(A2)}(\bar\tau^n_0(x)\star_1 1)\cdot(\bar\tau^n_0(y)\star_1 1)=x[n]\cdot y[n]$.
  \end{proof}
We conclude this section with three  examples. 

\begin{example}\label{exa:ar}
   Let $\mathbb N^\star=\mathbb N\setminus\{0\}$, and let $\mathbf N_m= (\mathbb N^\star,\times,1)$ be the multiplicative monoid of positive natural numbers.
 Let  $p_0,p_1,\dots,p_k,\dots$ be the sequence of prime numbers;  for every number $n\geq 1$,   $n=\prod_{i\in\omega} p_i^{n_i}$ is the factorisation of $n$ in prime numbers. The set $\mathbb N^\star$ can be endowed with a structure of merge algebra as follows, for all $n,m\in \mathbb N^\star$: 
 \begin{itemize} 
\item $n\star_k^{\mathbb N^\star} m = (\prod_{i=0}^{k-1}p_i^{n_i})\times( \prod_{j\geq k} p_j^{m_j})$;
\item $\bar\sigma^{\mathbb N^\star}(n)=\prod_{i\in\omega} p_i^{n_{\sigma_i}}$.
\end{itemize}
The pair $\mathbf N^\star=(\mathbf N^\star_0, \mathbf N^\star_1)$, where $\mathbf N^\star_0=\mathbf N_m$ and $\mathbf N^\star_1=(\mathbb N^\star,\star_k^{\mathbb N^\star},\bar\sigma^{\mathbb N^\star} )$, is a strong am-monoid.
\end{example}

\begin{example} \label{sec:m-mon-merge-alg}
Let $M$ be a set of cardinality $\geq 2$ and $\mathcal M=(\mathbf M_i: i\in\omega)$ be a family of monoids $\mathbf M_i=(M,\times_i,\epsilon_i)$ that have the same universe $M$. Let $\prod_{i\in\omega}\mathbf M_i = (M^\omega,\cdot,1)$ be the product of the family $\mathcal M$ of monoids, where $1=(\epsilon_i: i\in\omega)$.

We denote by  $\mathcal M(\omega)$ the pair $(\mathcal M(\omega)_0,\mathcal M(\omega)_1)$ such that $\mathcal M(\omega)_0=\prod_{i\in\omega}\mathbf M_i$ and $\mathbf M(\omega)_1$ is the fully canonical merge algebra $\mathbf{Seq}(M)$ on $M$. It can be proved that $\mathcal M(\omega)$, called the \emph{product m-monoid of the family $\mathcal M$}, is an extensional m-monoid that satisfies the left distributivity law $x\cdot(y\star_n z)=(x\cdot y)\star_n (x\cdot z)$. 

It is standard to prove that $\mathcal M(\omega)$ is never a cm-monoid, and that it is an am-monoid if and only if $\mathbf M_i =\mathbf M_j$ for every $i,j\in\omega$. In this latter case $\mathcal M(\omega)$ is a strong am-monoid.

The strong am-monoid $\mathbf N^\star=(\mathbf N^\star_0, \mathbf N^\star_1)$ of natural numbers defined in Example \ref{exa:ar} is isomorphic to a product m-monoid.
Let $\mathbf N_a= (\mathbb N,+,0)$ be the additive monoid of natural numbers.  The correspondence $n\mapsto (n_0,\ldots,n_k,\ldots)$, where $n=\prod_{i\in\omega} p_i^{n_i}$ is the factorisation of $n$ in prime numbers,  embeds $\mathbf N^\star$ into the strong am-monoid $\mathcal N_a(\omega)$, which is the product m-monoid of the family $\mathcal N_a=(\mathbf N_a: i\in \omega)$. The image of this embedding is $[(0,0,\ldots,0,\ldots)]_{\equiv \omega}$.
\end{example}

Notice that, in Example \ref{exa:dsum}, if   $\mathbf M$ is an
  am-monoid (resp. a cm-monoid), then $\mathbf M\oplus \mathbf M$ is also an am-monoid (resp. a cm-monoid).

\begin{example} Every pointed   merge algebra $(\mathbf A,1^\mathbf A)$  determines an m-monoid, denoted by $m(\mathbf A)=(m(\mathbf A)_0, m(\mathbf A)_1)$, where $m(\mathbf A)_1= \mathbf A$ and $m(\mathbf A)_0=(A,\cdot,1^\mathbf A)$ is  an idempotent monoid, whose multiplication is defined as follows:
$$a\cdot b =\begin{cases} b&\text{if  $b\neq 1^\mathbf A$}\\ a &\text{if  $b= 1^\mathbf A$.} \end{cases}$$
The m-monoid $m(\mathbf A)$ will be called the \emph{left-zero m-monoid} of $\mathbf A$. It is easy to show that
 $m(\mathbf A)$ is a cm-monoid if and only if  $\mathbf A$ is degenerate, and that
 $m(\mathbf A)$ is an am-monoid if and only if $\bar\sigma(1)= 1$ for every permutation $\sigma$.

The following is an example of a left-zero m-monoid that is neither a cm-monoid nor an am-monoid.
 Let $X$ be a set of cardinality $2$ and let $(\mathbf{Seq}(X),1^{\mathrm{seq}})$ be the full canonical merge algebra on $X$ pointed at some  $1^{\mathrm{seq}}=(\epsilon_0,\epsilon_1,\dots,\epsilon_n,\dots)$ such that $\epsilon_0\neq\epsilon_1$. Then, the m-monoid $m(\mathbf{Seq}(X))$ is neither a cm-monoid nor an am-monoid, because  it is not degenerate and  $\bar\tau^1_0(1^{\mathrm{seq}})=(\epsilon_1,\epsilon_0,\epsilon_2\dots,\epsilon_n,\dots)\neq 1^{\mathrm{seq}}$.
 \end{example}

\section{Dimension}\label{sec:dim}

In this section, we aim to lift the notion of finite dimensionality from clone algebras to m-monoids.

Let $\mathbf M$ be an m-monoid, and let $D(a,n,m)$ and $D_\omega(a,n,m)$ be formulas defined as follows:
  $$   D(a,n,m)::= \forall b \forall k[ (a\cdot (1\star_m b\star_{m+k} 1))\star_n a=a];\quad\quad
    D_\omega(a,n,m)::= \forall b[ (a\cdot (1\star_m b))\star_n a=a]$$
where $a,b$ ranges over $M$, and  $m,n,k$ over the natural numbers.

\begin{definition} Let $\mathbf M$ be an m-monoid and $a\in M$.
\begin{itemize}
\item[(i)] The element $a$ is \emph{finite-dimensional}  if  $\mathbf M\models \forall n\exists m\, D(a,n,m)$.
\item[(ii)]  
  The element $a$ is \emph{$\omega$-finite-dimensional} if  $\mathbf M\models \forall n\exists m\, D_\omega(a,n,m)$.
\item[(iii)]   $\mathbf M$ is \emph{($\omega$-)finite-dimensional} if each element of $M$ is such. 
\end{itemize}
\end{definition}

If $\mathbf M$ is an m-monoid, we denote by $ M_{\mathrm{fd}}$
($ M_{\omega\mathrm{fd}}$) the set of ($\omega$-)finite-dimensional elements of $M$.

The following example unrolls this definition in a paradigmatic case.

\begin{example}\label{exa:fd}
  The cm-monoid $\mathbf F_X^{(\omega)}$ of all endofunctions of $X^\omega$, presented in Example \ref{exa:funct}, has as elements the functions from $X^\omega$ to $X^\omega$.
Recall that a function $\varphi:X^\omega\rightarrow X^\omega$ may be seen as  a sequence $\varphi=(\varphi_0,\ldots,\varphi_n,\ldots)$ of  $\omega$-ary functions $\varphi_i:X^\omega \rightarrow X$. Then we have:

(1) a map $\varphi:X^\omega\to X^\omega$  satisfies $D_\omega(\varphi,n,m)$
  iff for every $i\in[0,n-1)$, there exists a function $f_i$ of arity $m$ such that $\varphi_i= f_i^\top$;

    (2) a map $\varphi:X^\omega\to X^\omega$  satisfies $D(\varphi,n,m)$ iff
 for every $i\in[0,n-1)$ and $s\in X^\omega$, there exists a function $f_{i,s}$ of arity $m$ such that $(\varphi_i)_{|[s]_\equiv}=f_{i,s}^\top$, where $[s]_\equiv$ is defined in Section \ref{sec:tra}.
  \end{example}

  \begin{remark} ({\em{Finitely ranked versus ($\omega$-)finite-dimensional}})
The $\omega$-finite (resp. finite) dimensionality property states that, for every $i$, $\varphi_i$ is  the top extension of a finitary function (resp. is locally 
the top extension of a finitary function), whereas finite rankedness refers to the fact that there exists $j$ such that
$\varphi=(\varphi_0,\ldots,\varphi_{j-1},\e_j,\e_{j+1}\ldots,\e_{j+n},\ldots)$.
Thus, finite rankedness is a property of the whole sequence $\varphi$, whereas ($\omega$-)finite dimensionality refers to each $\varphi_i$.
\end{remark}

\begin{lemma}\label{lem:D(a,n)}  Let $\mathbf M$ be an m-monoid, $a\in M$ and $n,m\in\omega$. 
\begin{enumerate}
\item If $\mathbf M\models D(a,n,m)$, then $\mathbf M\models (\forall n'\leq n) D(a,n',m)$.
\item If $\mathbf M\models D(a,n,m)$, then $\mathbf M\models (\forall m'\geq m) D(a,n,m')$.
\end{enumerate}
The same results hold for $D_\omega$.

\end{lemma}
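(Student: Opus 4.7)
The plan is to derive both items by direct manipulation using only the merge-algebra axioms (B1)--(B4); neither the monoid structure nor right distributivity enters, since the subterm $a\cdot(1\star_m b\star_{m+k}1)$ can be treated as an opaque element of $M$. Intuitively $D(a,n,m)$ asserts that the first $n$ coordinates of $a$ are insensitive to any modification of its entries located in the slot $[m,m+k-1]$; both relaxations (decreasing $n$, or pushing $m$ further to the right while enlarging $k$ to compensate) weaken this constraint.

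For part (1), I fix $n'\le n$ and apply $\star_{n'}a$ on the right to both sides of the hypothesis $(a\cdot(1\star_m b\star_{m+k}1))\star_n a = a$. The right-hand side becomes $a\star_{n'}a=a$ by idempotency (B1), while (B3), in the form $(x\star_n y)\star_{n'} z = x\star_{n'} z$ valid for $n\ge n'$, collapses the left-hand side to $(a\cdot(1\star_m b\star_{m+k}1))\star_{n'}a$. Since $b$ and $k$ remain arbitrary, this is exactly $D(a,n',m)$.

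For part (2), I fix $m'\ge m$, $b\in M$, and $k'\in\omega$, and apply the hypothesis $D(a,n,m)$ at the enlarged argument $\widetilde b:=1\star_{m'}b\star_{m'+k'}1$ together with any $k\ge m'-m+k'$. Two uses of (B4) (permitted by $m\le m'\le m'+k'$) combined with (B1) show that $1\star_m\widetilde b=\widetilde b$; and since $m+k\ge m'+k'$, either (B4) followed by (B1) (when $m+k>m'+k'$) or (B1) alone (when $m+k=m'+k'$) gives $\widetilde b\star_{m+k}1=\widetilde b$. Hence $1\star_m\widetilde b\star_{m+k}1=\widetilde b$, so $D(a,n,m)$ instantiated at $(\widetilde b,k)$ reads $(a\cdot\widetilde b)\star_n a=a$, which is precisely $D(a,n,m')$ evaluated at the arbitrary pair $(b,k')$.

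For $D_\omega$ the two items proceed in exactly the same style: part (1) is verbatim the argument above. For part (2) the compensating parameter disappears, and it suffices to set $\widetilde b:=1\star_{m'}b$ in $D_\omega(a,n,m)$; since $m\le m'$, (B4) combined with (B1) gives $1\star_m\widetilde b=1\star_{m'}b$, and the instance obtained is immediately $D_\omega(a,n,m')$. I anticipate no genuine obstacle; the only point requiring a little attention is pinning down the bound $k\ge m'-m+k'$ in part (2) for $D$ so that the auxiliary identity $1\star_m\widetilde b\star_{m+k}1=\widetilde b$ actually holds.
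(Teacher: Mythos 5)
Your proof is correct and follows essentially the same route as the paper's: part (1) is the identical (B3)+(B1) absorption argument, and part (2) is the same re-indexing trick of realising the $m'$-instance as an $m$-instance of the hypothesis. The only (cosmetic) difference is that in part (2) you substitute the whole element $\widetilde b=1\star_{m'}b\star_{m'+k'}1$ and allow any $k\geq m'-m+k'$, which forces you to check the extra identity $\widetilde b\star_{m+k}1=\widetilde b$, whereas the paper substitutes $b:=1\star_{m'}b'$ and chooses $k:=(m'-m)+k'$ exactly so that $m+k=m'+k'$, making that verification unnecessary.
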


\begin{proof}
(1) Let $k\in\omega$ and let $b\in M$ such that $\mathbf M\models (a\cdot (1\star_m b\star_{m+k}1))\star_n a=a$. Fix $n'\leq n$. From $(a\cdot (1\star_m b\star_{m+k}1))\star_n a=a$ it follows that 
$(a\cdot (1\star_m b\star_{m+k}1))\star_{n'} a=_{(B3)}[(a\cdot (1\star_m b\star_{m+k}1))\star_n a]\star_{n'} a=_{hp}a\star_{n'}a=a$. 

(2) Let $m'\geq m$ and let $k'\in\omega$ and $b'\in M$. We want to prove that  $(a\cdot (1\star_{m'} b'\star_{m'+k'}1))\star_n a=a$.
We have $1\star_{m'} b'\star_{m'+k'}1=1\star_m(1\star_{m'} b')\star_{m+(m'-m)+k'}1$. We then instantiate $D(a,n,m)::= \forall b \forall k[ (a\cdot (1\star_m b\star_{m+k} 1))\star_n a=a]$ with $b=1\star_{m'} b'$ and $k=(m'-m)+k'$ to get 
$a=(a\cdot (1\star_m (1\star_{m'} b')\star_{m+(m'-m)+k'} 1))\star_n a= (a\cdot (1\star_m 1\star_{m'} b'\star_{m'+k'} 1))\star_n a=(a\cdot (1\star_{m'} b'\star_{m'+k'}1))\star_n a$. Hence, $D(a,n,m')$  holds.
\end{proof}

\begin{lemma}\label{lem:omega}
Let $\mathbf M$ be an m-monoid. Then $ M_{\omega\mathrm{fd}}\subseteq M_{\mathrm{fd}}$.
\end{lemma}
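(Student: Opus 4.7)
The plan is to show that if $a$ satisfies the stronger-looking $\omega$-finite-dimensionality condition, then the very same witness $m$ (for a given $n$) also certifies ordinary finite-dimensionality. In other words, the inclusion holds element-wise and with the same dimension witnesses.

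Fix $a\in M_{\omega\mathrm{Fin}}$ and $n\in\omega$. By hypothesis, choose $m$ such that $\mathbf M\models D_\omega(a,n,m)$, i.e.\ for every $c\in M$,
$$(a\cdot (1\star_m c))\star_n a = a.$$
I claim $\mathbf M\models D(a,n,m)$. To see this, fix arbitrary $b\in M$ and $k\in\omega$, and instantiate $D_\omega(a,n,m)$ with $c:=b\star_{m+k}1$. This gives
$$(a\cdot (1\star_m (b\star_{m+k}1)))\star_n a = a.$$
Now, since $m<m+k$, axiom (B4) (together with the unambiguous parenthesization convention recorded just after the axioms) yields
$$1\star_m (b\star_{m+k} 1) \;=\; 1\star_m b\star_{m+k} 1.$$
Substituting into the previous equation gives exactly
$$(a\cdot(1\star_m b\star_{m+k}1))\star_n a = a,$$
which is the instance of $D(a,n,m)$ for this $b$ and $k$. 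Since $b$ and $k$ were arbitrary, $\mathbf M\models D(a,n,m)$, and hence $a\in M_{\mathrm{Fin}}$.

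There is essentially no obstacle: the statement is tautological once one observes that $D(a,n,m)$ is obtained from $D_\omega(a,n,m)$ by the particular choice $c=b\star_{m+k}1$, and that this choice is legal by (B4). No monotonicity in $m$ or $n$ (Lemma \ref{lem:D(a,n)}) is needed for the inclusion itself; the same witness $m$ transfers.
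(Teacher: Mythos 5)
Your proof is correct and is essentially the paper's own argument: the paper's one-line proof simply observes that $\{1\star_m b\star_{m+k} 1 : b\in M,\ k\in\omega\}\subseteq\{1\star_m c : c\in M\}$, which is exactly your instantiation $c:=b\star_{m+k}1$. (One cosmetic point: for $k=0$ the inequality $m<m+k$ fails, so (B4) does not apply directly there, but the identity $1\star_m(b\star_{m}1)=(1\star_m b)\star_{m}1$ still holds by (B3) and (B1), so the same witness transfers in that case as well.)
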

\begin{proof}
The set $\{1\star_m b: b\in M\}$ includes $\{1\star_m b\star_{m+k} 1: b\in M, k\in\omega\}$. 
\end{proof}

\begin{example} The inclusion $ M_{\omega\mathrm{fd}}\subseteq M_{\mathrm{fd}}$ can be strict. For example,
the function $\varphi:2^\omega\to 2^\omega$, defined by 
$\varphi(s)=1^\omega$ if $|\{ s_i : s_i=1\}|<\omega$; $0^\omega$ otherwise, is finite-dimensional but not $\omega$-finite-dimensional in the cm-monoid $\mathbf F^{(\omega)}_2$.
\end{example}

\begin{lemma}\label{lem:frfin}
 Let $\mathbf M$ be a finitely ranked m-monoid. Then $M_{\omega\mathrm{fd}}= M_{\mathrm{fd}}$.
\end{lemma}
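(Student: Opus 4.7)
The inclusion $M_{\omega\mathrm{Fin}}\subseteq M_{\mathrm{Fin}}$ is already Lemma~\ref{lem:omega}, so only the reverse direction requires proof. My plan is to show that, under the finite-rankedness hypothesis, the very same $m$ witnessing $D(a,n,m)$ also witnesses $D_\omega(a,n,m)$: the universal quantifier on $k$ present in $D$ is ``absorbed'' by the fact that every $b\in M$ is eventually stabilised by $\star_{m+k}1$ for $k$ large enough, so that the element $1\star_m b\star_{m+k}1$ collapses back to $1\star_m b$.

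Concretely, I would fix $a\in M_{\mathrm{Fin}}$, fix $n$, and pick $m$ such that $D(a,n,m)$ holds. Given an arbitrary $b\in M$, the goal is $(a\cdot(1\star_m b))\star_n a=a$. Since $\mathbf M$ is finitely ranked, $b$ has some finite rank $r$, and by Lemma~\ref{lem:emme}(2) one has $b\star_{m+k}1=b$ whenever $m+k\geq r$. I would then choose any such $k$ with $k\geq 1$ (so that the strict inequality $m<m+k$ holds). Axiom (B4) applied with its ``$k$'' equal to $m$ and its ``$n$'' equal to $m+k$ gives
\[
(1\star_m b)\star_{m+k}1 \;=\; 1\star_m(b\star_{m+k}1) \;=\; 1\star_m b.
\]
Substituting this equality into $D(a,n,m)$ specialised at $b$ and $k$ yields
\[
(a\cdot(1\star_m b))\star_n a \;=\; \bigl(a\cdot(1\star_m b\star_{m+k}1)\bigr)\star_n a \;=\; a,
\]
which is exactly what $D_\omega(a,n,m)$ demands at this $b$. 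Since $b$ was arbitrary, $a\in M_{\omega\mathrm{Fin}}$.

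I do not anticipate any genuine obstacle: the argument is essentially a matching of quantifiers, with finite rankedness playing the role of a ``compactness'' tool that lets one replace the condition $\forall k$ in $D$ by a single sufficiently large $k$ tailored to $b$. The only technicality worth flagging is the need to arrange $k\geq 1$, since (B4) requires the strict inequality $m<m+k$; finite rankedness and the freedom to enlarge $k$ make this automatic.
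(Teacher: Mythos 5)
Your proof is correct and follows essentially the same route as the paper: fix the $m$ witnessing $D(a,n,m)$, use finite rankedness of $b$ to pick $k$ large enough that $1\star_m b\star_{m+k}1=1\star_m b$, and then read off $D_\omega(a,n,m)$ from $D(a,n,m)$. Your explicit insistence on $k\geq 1$ so that (B4) applies is a slightly tidier handling of a point the paper glosses over, but the argument is the same.
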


\begin{proof}
 Let $a\in M$ be finitely dimensional, and fix $n\in\omega$. Then, there exists $m$ such that $(a\cdot (1\star_m b\star_{m+k} 1))\star_n a=a$ for every $k$ and $b$.  We now show that $(a\cdot (1\star_m b))\star_n a=a$ for every $b$. Since $b$ has finite rank, there exists $l$ such that $b\star_r 1=b$ for every $r\geq l$.  For $r\geq \mathrm{max}\{l,m\}$ we have: 
 $1\star_m b = 1\star_m b\star_r 1= 1\star_m b\star_{m+(r-m)} 1$. Using this, and substituting $k=r-m$, we get:    $(a\cdot (1\star_m b))\star_n a= (a\cdot (1\star_m b\star_{m+k} 1))\star_n a=_{hp}a$, which concludes the proof.
\end{proof}

\begin{lemma}\label{lem:findim}  Let $\mathbf M$ be a cm-monoid or an am-monoid. Then $ M_{\mathrm{fd}}$ and $ M_{\omega\mathrm{fd}}$ are pointed merge subalgebras of the merge reduct of $\mathbf M$.
\end{lemma}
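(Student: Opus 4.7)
My plan is to verify three things: (i) the coordinator $1$ lies in $M_{\mathrm{Fin}}$ and in $M_{\omega\mathrm{Fin}}$; (ii) both sets are closed under each $\star_k$; (iii) both sets are closed under each $\bar\sigma$. Only step (iii) will use the additional hypothesis that $\mathbf M$ is a cm-monoid or an am-monoid; (i) and (ii) hold in every m-monoid.

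For (i), the witness $m=n$ works. For arbitrary $b,k$,
\[
(1\cdot(1\star_n b\star_{n+k}1))\star_n 1 \;=\; (1\star_n b\star_{n+k}1)\star_n 1 \;=\; (1\star_n b)\star_n 1 \;=\; 1,
\]
using (B3) with $n+k\geq n$ to kill the tail $\star_{n+k}1$, and then the rectangular identity from (B1). The very same chain establishes $D_\omega(1,n,n)$.

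For (ii), given $a,a'\in M_{\mathrm{Fin}}$ and $k$, I would take a common witness $m$ for $D(a,n,m)$ and $D(a',n,m)$, possible by Lemma~\ref{lem:D(a,n)}(2). With $z=1\star_m b\star_{m+j}1$, right distributivity (L1) gives $((a\star_k a')\cdot z)\star_n(a\star_k a')=((a\cdot z)\star_k(a'\cdot z))\star_n(a\star_k a')$, and then the ``swapped'' version of Lemma~\ref{lem:exall}(2), namely $(P\star_k Q)\star_n(P'\star_k Q')=(P\star_n P')\star_k(Q\star_n Q')$ (whose proof is identical to the stated one, since the proof of Lemma~\ref{lem:exall}(2) is symmetric in the roles of $n$ and $k$), turns this into $((a\cdot z)\star_n a)\star_k((a'\cdot z)\star_n a')=a\star_k a'$ by the two instances of $D$. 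The $\omega$-finite-dimensional case is identical.

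Step (iii) is the main obstacle. Fix $\sigma\in S_\omega$ and choose $N$ such that $\sigma$ is a permutation of $N$. Given $n$, set $N'=\max(n,N)$ and, using Lemma~\ref{lem:D(a,n)}(2), pick a witness $m\geq N$ for $D(a,N',m)$. The common pattern in both subcases is: starting from $(\bar\sigma(a)\cdot z)\star_n\bar\sigma(a)$, insert a $\star_{N'}$ inside via (B3) (legal since $N'\geq n$), commute $\bar\sigma$ across it using Lemma~\ref{lem:b8b12}(4) (legal because $\sigma$ is a permutation of $N'$), and then collapse by $D(a,N',m)$, ending with $\bar\sigma(a)\star_n\bar\sigma(a)=\bar\sigma(a)$. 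In the \emph{cm-monoid} case, identity (L2) provides $(\bar\sigma(a)\cdot z)\star_n\bar\sigma(a)=\bar\sigma(a\cdot z)\star_n\bar\sigma(a)$, and the pattern applies directly. In the \emph{am-monoid} case, (L3) first rewrites $\bar\sigma(a)\cdot z=\bar\sigma(a\cdot\bar\sigma^{-1}(z))$; then, using Lemma~\ref{lem:am}(i) (i.e.\ $\bar\sigma^{-1}(1)=1$) together with two applications of Lemma~\ref{lem:b8b12}(4) to $\bar\sigma^{-1}$ (permitted because $m,m+j\geq N$, so $\sigma^{-1}$ is a permutation of $m$ and of $m+j$), one checks that $\bar\sigma^{-1}(z)=1\star_m\bar\sigma^{-1}(b)\star_{m+j}1$ has the same admissible shape, so $D(a,N',m)$ can be instantiated with $\bar\sigma^{-1}(z)$ in place of $z$. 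I expect the main delicate point to lie precisely here: verifying that plugs of the prescribed form are preserved by $\bar\sigma^{-1}$, which is exactly what the conditions $m\geq N$ and $\bar\sigma^{-1}(1)=1$ secure. The $\omega$-finite-dimensional case is analogous, with $z=1\star_m b\star_{m+j}1$ replaced by $z=1\star_m b$ and correspondingly $\bar\sigma^{-1}(z)=1\star_m\bar\sigma^{-1}(b)$.
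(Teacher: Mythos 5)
Your proof is correct and follows essentially the same route as the paper's: the unit is handled by a direct computation, closure under $\star_k$ uses a common witness (via Lemma \ref{lem:D(a,n)}) together with right distributivity, and closure under $\bar\sigma$ is the only step needing (L2) or (L3) plus $\bar\sigma(1)=1$ from Lemma \ref{lem:am}(i). The only divergences are cosmetic: you invoke the interchange identity of Lemma \ref{lem:exall}(2) where the paper chains (B3) applications, and you insert an auxiliary $\star_{N'}$ to commute $\bar\sigma$ where the paper instead chooses $n$ large from the start and descends afterwards via Lemma \ref{lem:D(a,n)}.
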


\begin{proof} (1) We show that $\mathbf M\models D_\omega(1,n,n)$.
Since $(M, \star_n)$ is a rectangular band, it satisfies $x \star_n y \star_n x = x$. Then we have:
$(1\cdot (1\star_n b))\star_n 1=1\star_n b\star_n 1 =1$. Then $\mathbf M\models \forall n D_\omega(1,n,n)$.
 
 (2) Let $a\in  M_{\omega\mathrm{fd}}$ and  $\rho\in S_\omega$ be a permutation such that $\rho(i)=i$ for every $i\geq k$.  Let  $n,m>k$  such that $(a\cdot (1\star_m b))\star_n a=a$ for every $b\in M$. 
Then we have: $\bar\rho(a)=\bar\rho((a\cdot (1\star_m b))\star_n a)=_{L.\ref{lem:b8b12}(4)}\bar\rho(a\cdot (1\star_m b))\star_n \bar\rho(a)$.

If $\mathbf M$ is a cm-monoid then $\bar\rho(a)=\bar\rho(a\cdot (1\star_m b))\star_n \bar\rho(a)=_{(LS)}(\bar\rho(a)\cdot (1\star_m b))\star_n \bar\rho(a)$.

If $\mathbf M$ is a am-monoid then $\bar\rho(a)=\bar\rho(a\cdot (1\star_m b))\star_n \bar\rho(a)=_{(A1)}(\bar\rho(a)\cdot \bar\rho(1\star_m b))\star_n \bar\rho(a)
=_{L.\ref{lem:b8b12}(1)}
(\bar\rho(a)\cdot (\bar\rho(1)\star_m   b))\star_n \bar\rho(a)=
(\bar\rho(a)\cdot (1\star_m b))\star_n \bar\rho(a)
$, because, by Lemma \ref{lem:am},  $\bar\rho(1)=1$ for every $\rho$ in every am-monoid.

By applying Lemma \ref{lem:D(a,n)} we conclude $\mathbf M\models \forall n\exists mD_\omega(\bar\rho(a),n,m)$.

 (3) Let  $n, m\in \omega$ such that $(a\cdot (1\star_m c))\star_n a=a$ and $(b\cdot (1\star_m c))\star_n b=b$ for every $c\in M$. 
 Without loss of generality, by Lemma \ref{lem:D(a,n)} we can assume $n>k$. Then we have:
 $((a\star_k b)\cdot (1\star_m c))\star_n (a\star_k b)=_{(RD)}
 (a\cdot (1\star_m c))\star_k  (b\cdot(1\star_m c))\star_n (a\star_k b)=_{(B3)} (a\cdot (1\star_m c))\star_k  (b\cdot(1\star_m c))\star_n b
=_{(B4)} (a\cdot (1\star_m c))\star_k  [(b\cdot(1\star_m c))\star_n b] =_{hp} (a\cdot (1\star_m c))\star_k b =_{(B3)} ((a\cdot (1\star_m c))\star_n a) \star_k b=a \star_k b$.

The conclusion follows for $ M_{\omega\mathrm{fd}}$. A similar proof works for $ M_{\mathrm{fd}}$.
\end{proof}

From Lemma \ref{lem:findim} it follows that, if $a$ is ($\omega$-)finite-dimensional, then $a[i]$ is also ($\omega$-)finite-dimensional for every natural number $i$. In particular, the coordinates of the unit $1$ are $\omega$-finite-dimensional.

\begin{lemma}\label{lem:mah}
Let $\mathbf M$ be an extensional cm-monoid, $a\in M$ and $n,m\in\omega$. We have that 
$\mathbf M\models D(a,n,m)$ if and only if  
$\mathbf M\models \forall i\in[0,n)\forall b\forall k ((a[i]\cdot (1\star_m b\star_{m+k}1))_{<1} = a[i])$. A similar result holds for $D_\omega(a,n,m)$.
\end{lemma}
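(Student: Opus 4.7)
The plan is to use extensionality to reduce the equation $D(a,n,m)$ to a family of coordinate-wise equations, and then simplify each of these using the cm-monoid identity for the coordinates of a product.

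First, since $\mathbf M$ is extensional, by Lemma \ref{lem:coto}(1) we may assume, up to isomorphism, that $\mathbf M$ is canonical, so that two elements of $M$ are equal if and only if they agree in every coordinate. Thus the identity $(a\cdot (1\star_m b\star_{m+k} 1))\star_n a = a$ defining $D(a,n,m)$ is equivalent to the conjunction, ranging over $i\in\omega$, of the coordinate equalities $((a\cdot (1\star_m b\star_{m+k} 1))\star_n a)[i] = a[i]$.

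Next, I would apply Lemma \ref{lem:[k]}(ii) to the $\star_n$-product on the left: with $u := a\cdot (1\star_m b\star_{m+k} 1)$ and $v := a$, one has $(u\star_n v)[i] = u[i]$ when $i<n$ and $(u\star_n v)[i] = v[i] = a[i]$ when $i\geq n$. The constraints for $i\geq n$ become trivial identities $a[i]=a[i]$, so what remains is that, for all $i\in[0,n)$ and all $b$, $k$, we have $(a\cdot (1\star_m b\star_{m+k}1))[i] = a[i]$.

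Finally, because $\mathbf M$ is a cm-monoid, Lemma \ref{lem:[k]bis}(2) furnishes the identity $(x\cdot y)[i] = (x[i]\cdot y)_{\pref 1}$. Specialising to $x = a$ and $y = 1\star_m b\star_{m+k}1$ rewrites the previous condition as $(a[i]\cdot (1\star_m b\star_{m+k}1))_{\pref 1} = a[i]$, which is precisely the right-hand side of the equivalence to be proved. The same chain of reasoning establishes the $D_\omega$ case: in every step one merely replaces $1\star_m b\star_{m+k}1$ by $1\star_m b$, obtaining $(a[i]\cdot (1\star_m b))_{\pref 1} = a[i]$. There is no real obstacle here; the proof is a routine chain of substitutions that packages extensionality, the coordinate formula for $\star_n$, and the cm-monoid coordinate formula for products into a single equivalence.
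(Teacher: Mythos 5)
Your proof is correct and follows essentially the same route as the paper: extensionality reduces the equation to coordinate-wise equalities, Lemma \ref{lem:[k]}(ii) handles the $\star_n$, and Lemma \ref{lem:[k]bis}(2) converts $(a\cdot y)[i]$ into $(a[i]\cdot y)_{\pref 1}$. The only cosmetic difference is that you pass through canonicity via Lemma \ref{lem:coto}(1), whereas the definition of extensionality already gives directly that equality is equivalent to agreement of all coordinates.
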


\begin{proof}  
By extensionality and Lemmas \ref{lem:[k]bis}(1) and \ref{lem:[k]}, we have that the following conditions are equivalent:
 \begin{enumerate}
\item[(i)] $(a\cdot (1\star_m b\star_{m+k}1))\star_n a=a$;
\item[(ii)] $a$ and $(a\cdot (1\star_k b\star_{k+m}1))\star_n a$ have the same coordinates;
\item[(iii)] $a[i]=(a[i]\cdot (1\star_k b\star_{k+m}1))_{<1}$ for every $0\leq i\leq n-1$.
\end{enumerate}
  \end{proof}

\section{Relating cm-monoids and clone algebras} \label{sec:ca1}

In this section, we relate  the category of clone algebras to the category of cm-monoids via an adjunction, which restricts to an equivalence when considering  finitely ranked cm-monoids.

We first show how to associate  functorially a cm-monoid to a clone algebra, and vice-versa, and then prove that these constructions are adjoint to each other.

Let $\mathbf C=(C,q_n^\mathbf C, \e_n^\mathbf C)$ be a  clone algebra and $1=(\e_0^\mathbf C,\dots,\e_n^\mathbf C,\dots)$.
We define a cm-monoid $\mathbf C^{\mathrm{cm}}=(\mathbf C^{\mathrm{cm}}_0,\mathbf C^{\mathrm{cm}}_1)$,
where $\mathbf C^{\mathrm{cm}}_1$ is the canonical merge algebra of universe $[1]_\equiv\subseteq C^\omega$. The monoid $\mathbf C^{\mathrm{cm}}_0=([1]_\equiv,\cdot^{\mathrm{cm}},1)$ is defined as follows, for every $ a,  b\in [1]_\equiv$ such that $a_i = \e_i^\mathbf C$ for every $i\geq k$: 
$$b\cdot^{\mathrm{cm}} a = (q_k^\mathbf C(b_0, a_0,\dots,a_{k-1}),\dots,q_k^\mathbf C(b_n, a_0,\dots,a_{k-1}),\dots).$$
 Note that  $ b\cdot^{\mathrm{cm}} a$ is well defined by (C4), and that
 $b\cdot^{\mathrm{cm}} a\in [1]_\equiv$ because there exists $n\geq k$ such that $b_j=\e_j^\mathbf C$ and $q_k^\mathbf C(b_j, a_0,\dots,a_{k-1}) = \e_j^\mathbf C$ for every $j\geq n$.   

\begin{proposition} \label{th:cm-functor-correct}
Let $\mathbf C$ be a  clone algebra. Then the following conditions hold:
\begin{enumerate}
\item  The algebra $\mathbf C^{\mathrm{cm}}$ is a canonical and finitely ranked cm-monoid. 
\item If $\mathbf C$ is finite-dimensional, then $\mathbf C^{\mathrm{cm}}$ is finite-dimensional too.
\end{enumerate}
\end{proposition}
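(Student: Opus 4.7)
The approach is to verify each component of the cm-monoid definition, working coordinatewise in $A^\omega$. For Part (1), the merge-algebra reduct $\mathbf{A}^{\mathrm{cm}}_1$ is canonical by construction: $[1]_\equiv$ is a single $\equiv$-equivalence class, hence a basic trace on $\mathrm{dom}([1]_\equiv)\subseteq A$, so by Lemma~\ref{lem:tra} it is a merge subalgebra of $\mathbf{Seq}(A)$. Finite rank is immediate: any $a\in[1]_\equiv$ eventually agrees with $1=(\e_0^\mathbf{A},\e_1^\mathbf{A},\dots)$, so some $n$ yields $a\star_n 1=a$.

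For the monoid structure, the key observation is that, fixing a common index $K$ past which all relevant sequences agree with $1$, axiom (C4) ensures that (\ref{product-from-ca}) computes the same value using $K$ as using any smaller admissible index. Associativity then reduces componentwise to axiom (C5); the left unit law $1\cdot a=a$ follows from (C1)--(C2); the right unit law $a\cdot 1=a$ reduces (taking $k=0$) to (C3) in the form $q_0(x)=x$. Axioms (L1) and (L2) are again direct coordinatewise checks: both sides of $(x\star_n y)\cdot z=(x\cdot z)\star_n(y\cdot z)$ compute to $q_k(x_i,z_0,\dots,z_{k-1})$ for $i<n$ and to $q_k(y_i,z_0,\dots,z_{k-1})$ for $i\geq n$, while both sides of $\bar\sigma(x)\cdot y=\bar\sigma(x\cdot y)$ evaluate at coordinate $i$ to $q_k(x_{\sigma_i},y_0,\dots,y_{k-1})$. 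Along the way one should check that $[1]_\equiv$ is closed under all these operations, which is automatic once $\mathbf{A}^{\mathrm{cm}}_1$ is identified as a trace-based merge subalgebra.

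For Part (2), given $a\in[1]_\equiv$ and $n\in\omega$, I must exhibit $m$ witnessing $D(a,n,m)$. By finite-dimensionality of $\mathbf{A}$, each of the finitely many coordinates $a_0,\dots,a_{n-1}$ has some finite dimension in the clone algebra, so I may pick $m$ strictly greater than every $\dim(a_i)$ for $i<n$; then each such $a_i$ is independent of $\e_j^\mathbf{A}$ for all $j\geq m$. For arbitrary $b\in[1]_\equiv$ and $k\in\omega$, setting $c=1\star_m b\star_{m+k}1$, one checks $c_j=\e_j^\mathbf{A}$ for $j\geq m+k$ and $c_j=\e_j^\mathbf{A}$ for $j<m$; Lemma~\ref{lem:ind1} then collapses $q_{m+k}(a_i,c_0,\dots,c_{m+k-1})$ to $q_m(a_i,\e_0^\mathbf{A},\dots,\e_{m-1}^\mathbf{A})=a_i$ via (C3), for every $i<n$. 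Hence $(a\cdot c)\star_n a=a$. The only part requiring attention is the bookkeeping around the common index past which all sequences agree with $1$; once $m$ is chosen uniformly to dominate all relevant dimensions, no further subtlety arises.
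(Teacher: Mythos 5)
Your proof is correct and follows essentially the same coordinatewise strategy as the paper's: identify $[1]_\equiv$ as a canonical (trace-based) merge algebra, verify the monoid laws and (L1) componentwise via (C1)--(C5), and in part (2) choose the witness $m$ to dominate the relevant dimensions so that Lemma \ref{lem:ind1} and (C3) collapse the product. The only minor divergences are that you verify (L2) by a direct coordinate computation where the paper instead invokes Proposition \ref{prop:loc=str}, and in part (2) you bypass Lemma \ref{lem:mah} and pick $m$ to dominate only $\mathrm{dim}(a_0),\dots,\mathrm{dim}(a_{n-1})$ rather than all coordinates of $a$ together with $n$; both shortcuts are sound, since $(\,\cdot\,)\star_n a$ only requires the first $n$ coordinates of $a\cdot c$ to agree with those of $a$.
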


\begin{proof} (1) The algebra $\mathbf C^{\mathrm{cm}}_1$ is a canonical merge algebra by definition.
We now prove that $\mathbf C^{\mathrm{cm}}_0$ is a monoid and that  (RD) holds.
Let $x,y,z\in [1]_\equiv$ such that $x_i=y_i=z_i=\e_i$ for every $i\geq k$.
\begin{enumerate}
\item[(i)]  $ x \cdot 1=(q_k(x_0,\e_0,\dots,\e_{k-1}),\dots,q_k(x_n,\e_0,\dots,\e_{k-1}),\dots)=_{(C3)}  x$.
\item[(ii)] $1 \cdot  x=(q_k(\e_0, x_0,\dots,x_{k-1}),\dots,q_k(\e_n, x_0,\dots,x_{k-1}),\dots)=_{(C1,C2)}  x$, because $x_i=\e_i$ for  $i\geq k$.
\item[(iii)] 
\[
\begin{array}{lll}
 ( x \cdot  y)\cdot z   &  = & (q_k(x_i, y_0,\dots,y_{k-1}):i\in\omega) \cdot  z \\
  &  = &  (q_k(q_k(x_i, y_0,\dots,y_{k-1}), z_0,\dots,z_{k-1}):i\in\omega) \\
   &  =_{C5} & (q_k(x_i,q_k(y_0,z_0,\dots,z_{k-1}),\dots,q_k(y_{k-1},z_0,\dots,z_{k-1}),\e_k,\dots) :i\in\omega) \\
      &  = & x\cdot (q_k(y_0,z_0,\dots,z_{k-1}),\dots,q_k(y_{k-1},z_0,\dots,z_{k-1}),\e_k,\dots) \\
     &  = &  x \cdot ( y\cdot  z).  \\
\end{array}
\]

\item[(iv)] If $\bar z= z_0,\dots,z_{k-1}$, then
$(x\star_n y)\cdot z=(q_k(x_0, \bar z),\dots,q_k(x_{n-1}, \bar z),q_k(y_n, \bar z),\dots)= (x\cdot z) \star_n( y \cdot z)$.
\end{enumerate}
It is easy to see that the m-monoid $\mathbf C^{\mathrm{cm}}$ is finitely ranked. 
From canonicity and from Proposition \ref{prop:loc=str}  it follows that $\mathbf C^{\mathrm{cm}}$ is a cm-monoid. 

\medskip

\noindent (2)  Let $1=(\e_i^\mathbf C:i\in\omega)$,  $C^{\mathrm{cm}}=[1]_\equiv$ and $a=(a_0,\dots,a_{m-1},\e_m,\e_{m+1},\dots)\in [1]_\equiv$. We have to show that $a$ is finite-dimensional in the canonical cm-monoid $\mathbf C^{\mathrm{cm}}$. By Lemma \ref{lem:mah} this means that, for every $n$ there exists $k$ such that
 $\forall i\in[0,n)\forall r\forall b\ (a[i]\cdot^{\mathrm{cm}} (1\star_k^{\mathrm{cm}} b\star_{k+r}^{\mathrm{cm}} 1))_{<1} = a[i]$.
  Given $n\in \omega$,  we choose $k$ to be a natural number greater than $n$, $m$ and $\mathrm{dim}(a_0),\dots, \mathrm{dim}(a_{m-1})$ (see Section \ref{sec:clonealg}). Let $c> k+r$ such that $a_i=\e_i$ for every $i\geq c$. Then we have:
\begin{itemize}
\item $(a\cdot (1\star_k b\star_{k+r} 1))_i= q_c(a_i,\e_0,\dots,\e_{k-1},b_k,\dots,b_{k+r-1},\e_{k+r},\dots)=q_k(a_i,\e_0,\dots,\e_{k-1})=a_i$ for every $0\leq i\leq m-1$, because $\mathrm{dim}(a_i)<k$;
\item $(a\cdot (1\star_k b\star_{k+r} 1))_i=\e_{i}$ ($m\leq i\leq k-1$);
\end{itemize}
It follows that $(a\cdot (1\star_k b\star_{k+r} 1))\star_n a=a$, because $n<k$.  Hence, $a$ is finite-dimensional in $\mathbf C^{\mathrm{cm}}$.
\end{proof}

Note that by Lemma \ref{lem:coto}  $\mathbf C^{\mathrm{cm}}$ is extensional. 

\medskip
We now go in the reverse direction and show how to get clone algebras from cm-monoids.
Let $\mathbf M$ be a  cm-monoid
and  $a=(a_i\in M_{|i+1} :i\in\omega)$ be a sequence of elements. We define by induction a sequence of elements of $M$ as follows:
$$\widehat a_0 = a_0;\qquad \widehat a_n=\widehat a_{n-1}  \star_n \bar\tau^n_0 ( a_n).$$
We shall need this notation in fact only for sequences in which all $a_i$ are in
$M_{|1}$, but the above definition places this construction in its right context
of generality. We shall also apply the construction to finite sequences $\bar b=(b_0,\dots,b_{m-1})$,
setting $b=(b_0,\dots,b_{m-1},1[m],1[m+1],\dots)$ and $\widehat{\bar b}_m=\widehat{b}_m$.

\begin{lemma}\label{lem:8.3}
 $(\widehat a_n)_{\pref n+1}= \widehat a_n$.
\end{lemma}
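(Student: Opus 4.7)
The plan is to prove the identity by induction on $n$, with the key observation that $\tau^n_0$ is a permutation of $n+1$ (it fixes all indices $\geq n+1$), so Lemma~\ref{lem:b8b12}(1) applies at level $n+1$.

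For the base case $n=0$, we have $\widehat a_0 = a_0$, and by hypothesis $a_0 \in M_{|1}$, which by definition of rank means exactly that $(a_0)_{\pref 1} = a_0$.

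For the inductive step, I would compute:
\[
(\widehat a_n)_{\pref n+1} \;=\; \bigl(\widehat a_{n-1} \star_n \bar\tau^n_0(a_n)\bigr) \star_{n+1} 1 \;\stackrel{(\mathrm{B4})}{=}\; \widehat a_{n-1} \star_n \bigl(\bar\tau^n_0(a_n) \star_{n+1} 1\bigr).
\]
Since $\tau^n_0$ fixes every $i \geq n+1$, it is a permutation of $n+1$, so by Lemma~\ref{lem:b8b12}(1) we can rewrite
\[
\bar\tau^n_0(a_n) \star_{n+1} 1 \;=\; \bar\tau^n_0\bigl(a_n \star_{n+1} 1\bigr) \;=\; \bar\tau^n_0(a_n),
\]
where the last equality uses the assumption $a_n \in M_{|n+1}$, i.e.\ $(a_n)_{\pref n+1}=a_n$. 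Plugging back in yields $(\widehat a_n)_{\pref n+1} = \widehat a_{n-1} \star_n \bar\tau^n_0(a_n) = \widehat a_n$.

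There is no real obstacle here; the only subtlety worth flagging is the choice of the right level at which to view $\tau^n_0$ as a ``permutation of $k$''—we need $k=n+1$ rather than $k=n$, since we want to interact with $\star_{n+1}$. Everything else is a direct unfolding via (B4) and the hypothesis on the ranks of the $a_i$.
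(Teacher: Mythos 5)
Your proof is correct and follows the same route as the paper's: unfold the definition, move the restriction inside via (B4), commute it past $\bar\tau^n_0$ using Lemma \ref{lem:b8b12}(1) with $\tau^n_0$ viewed as a permutation of $n+1$, and conclude from $a_n\in M_{|n+1}$. The only cosmetic difference is that you frame it as an induction, but your inductive hypothesis is never actually used — the computation for $n\geq 1$ is direct, and only the case $n=0$ (which you handle correctly) needs to be treated separately.
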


\begin{proof}
We get the conclusion as follows:
 $(\widehat a_n)_{\pref n+1}= (\widehat a_{n-1}  \star_n \bar\tau^n_0 ( a_n))\star_{n+1} 1 =_{(B4)} \widehat a_{n-1}  \star_n (\bar\tau^n_0 ( a_n)\star_{n+1} 1)=_{L.\ref{lem:b8b12}(1)}\widehat a_{n-1}  \star_n \bar\tau^n_0 ( a_n\star_{n+1} 1)=\widehat a_{n-1}  \star_n \bar\tau^n_0 ( a_n)= \widehat a_n$, because $a_n\in M_{|n+1}$.
\end{proof}

It follows from Lemma \ref{lem:8.3}  that $\widehat a_n\in M_{|n+1}$.

\begin{lemma}\label{lem:bar} Let $ a=(a_i\in M_{|i+1} :i\in\omega)$. Then we have:
$\widehat a_n[k]=\begin{cases}(a_k)_{\pref 1}&\text{if $0\leq k\leq n$}\\
  1[k] &\text{otherwise}\\\end{cases}$.
\end{lemma}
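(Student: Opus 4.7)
The plan is to prove this by induction on $n$, using the facts about coordinates established in Lemma \ref{lem:[k]}, in particular: (i) $\bar\sigma(a)[k]=a[\sigma_k]$, and (ii) the ``filtering'' behaviour of $\star_n$ on coordinates: $(x\star_n y)[k]=x[k]$ for $k<n$ and $(x\star_n y)[k]=y[k]$ for $k\geq n$, with the special case $(a_{\pref n})[k]=1[k]$ when $k\geq n$.

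For the base case $n=0$, we have $\widehat a_0=a_0$. Since $a_0\in M_{|1}$, i.e.\ $a_0=(a_0)_{\pref 1}$, the definition gives $a_0[0]=(a_0)_{\pref 1}$, and for $k\geq 1$, Lemma \ref{lem:[k]}(ii) applied to $a_0=(a_0)_{\pref 1}$ yields $a_0[k]=1[k]$, matching both cases of the statement.

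For the inductive step, assume the formula holds for $\widehat a_{n-1}$, and consider $\widehat a_n=\widehat a_{n-1}\star_n \bar\tau^n_0(a_n)$. Apply Lemma \ref{lem:[k]}(ii) to split on $k$. If $0\leq k\leq n-1$, then $\widehat a_n[k]=\widehat a_{n-1}[k]$, which by the induction hypothesis equals $(a_k)_{\pref 1}$. If $k\geq n$, then $\widehat a_n[k]=\bar\tau^n_0(a_n)[k]=a_n[\tau^n_0(k)]$ by Lemma \ref{lem:[k]}(i). When $k=n$, we get $a_n[0]=(a_n)_{\pref 1}$, giving the first case of the statement. When $k>n$, we have $\tau^n_0(k)=k$ and $a_n\in M_{|n+1}$, so by Lemma \ref{lem:[k]}(ii) applied to $a_n=(a_n)_{\pref n+1}$ we obtain $a_n[k]=1[k]$, as required.

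There is essentially no obstacle here: the argument is a direct bookkeeping induction, and every step follows from Lemma \ref{lem:[k]} together with Lemma \ref{lem:8.3} (which ensures that $\widehat a_{n-1}\in M_{|n}$, so that the coordinate calculations are of the expected shape). The only care needed is in the sub-case $k=n$, where one must remember that $\tau^n_0$ swaps $0$ and $n$ so that $\tau^n_0(n)=0$ and the $n$-coordinate of $\bar\tau^n_0(a_n)$ is indeed $a_n[0]=(a_n)_{\pref 1}$.
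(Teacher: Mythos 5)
Your proof is correct and follows essentially the same route as the paper: an induction on $n$ whose base case and inductive step both reduce to the coordinate computations of Lemma \ref{lem:[k]}, with the same case split at $k<n$, $k=n$ (where $\tau^n_0(n)=0$ gives $a_n[0]=(a_n)_{\pref 1}$), and $k>n$ (where $a_n\in M_{|n+1}$ forces $a_n[k]=1[k]$). The appeal to Lemma \ref{lem:8.3} is harmless but not needed, since Lemma \ref{lem:[k]}(ii) applies to arbitrary elements.
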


\begin{proof}
 The proof is by induction on $n$. 
 
 ($n=0$): By Lemma \ref{lem:[k]} we have $\widehat a_0[0]=a_0 = (a_0)_{\pref 1}$ and $\widehat a_0[k]=1[k]$ for $k>0$.
 
 ($n>0$): By Lemma \ref{lem:[k]}, since $\widehat a_n[k]=\widehat a_{n-1}  \star_n \bar\tau^n_0 ( a_n)$, then  for  $0\leq k\leq n-1$ we have that $\widehat a_n[k] =\widehat a_{n-1}[k]$ and, for $k\geq n$,  $\widehat a_n[k]=\bar\tau^n_0 ( a_n)[k]$.
By  induction hypothesis
$\widehat a_n[k] =\widehat a_{n-1}[k]=(a_k)_{\pref 1}$ for $0\leq k\leq n-1$. Moreover, for $k=n$,   $\widehat a_n[n] = \bar\tau^n_0( a_n)[n]=_{\mathrm{L.}\ref{lem:[k]}(i)}a_n[\tau^n_0(n)]=a_n[0]=(a_n)_{\pref 1}$ and, for every $k>n$, $\widehat a_n[k] = \tau^n_0 ( a_n)[k]=a_n[\tau^n_0(k)]=a_n[k]=1[k]$, because $a_n$ has rank $n+1$.
\end{proof}

\begin{lemma}\label{lem:a=hat}
 Let $a\in M$ and $a_\sq=(a[0],a[1],\dots)$ be the sequence of its coordinates. Then $a\star_{n+1} 1 = (\widehat{a_\sq})_n$. Moreover, if $a$ has rank $n+1$, then $a = (\widehat{a_\sq})_k$ for all $k\geq n$.
\end{lemma}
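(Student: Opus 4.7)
The plan is to prove both claims by invoking the ``extensionality on bounded rank'' principle supplied by Lemma \ref{lem:B12}(3): two elements of $M_{|m}$ that agree on coordinates $0,\dots,m-1$ must be equal. So in each case the strategy is to verify (i) that the two elements compared lie in a common $M_{|m}$, and (ii) that they share the relevant coordinates.

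For the first equality $a\star_{n+1}1=(\widehat{a_\sq})_n$, I would first observe that both sides lie in $M_{|n+1}$. On the left, $(a\star_{n+1}1)\star_{n+1}1=a\star_{n+1}1$ by (B1), so $a\star_{n+1}1\in M_{|n+1}$. On the right, Lemma \ref{lem:8.3} gives $(\widehat{a_\sq})_n\in M_{|n+1}$ directly (noting that the construction $\widehat{(\_)}_n$ applies since each $a[i]\in M_{|1}\subseteq M_{|i+1}$). I would then compute the coordinates. By Lemma \ref{lem:[k]}(ii), $(a\star_{n+1}1)[k]$ equals $a[k]$ for $k\leq n$ and $1[k]$ for $k\geq n+1$. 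By Lemma \ref{lem:bar}, $(\widehat{a_\sq})_n[k]$ equals $(a[k])_{\pref 1}$ for $0\leq k\leq n$ and $1[k]$ for $k>n$. Since each coordinate $a[k]$ has rank $\leq 1$ (being of the form $\bar\tau^k_0(a)_{\pref 1}$ and using Lemma \ref{lem:emme}(1)), we have $(a[k])_{\pref 1}=a[k]$. Hence the two elements agree on coordinates $0,\dots,n$, and Lemma \ref{lem:B12}(3) (applied with $n$ replaced by $n+1$) gives the equality.

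For the ``moreover'' part, assume $a$ has rank $n+1$, i.e., $a=a\star_{n+1}1$. The case $k=n$ is immediate from the first part. For $k\geq n$, I would again argue via Lemma \ref{lem:B12}(3) that $a=(\widehat{a_\sq})_k$. Both elements lie in $M_{|k+1}$: $a$ has rank $n+1\leq k+1$, and $(\widehat{a_\sq})_k\in M_{|k+1}$ by Lemma \ref{lem:8.3}. To compare coordinates $0,\dots,k$, note that since $a$ has rank $n+1$, Lemma \ref{lem:[k]}(ii) applied to $a=a\star_{n+1}1$ gives $a[j]=1[j]$ for every $j>n$. By Lemma \ref{lem:bar}, $(\widehat{a_\sq})_k[j]=a[j]$ for $0\leq j\leq n$ and $(\widehat{a_\sq})_k[j]=(a[j])_{\pref 1}=a[j]=1[j]$ for $n<j\leq k$. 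Thus $a$ and $(\widehat{a_\sq})_k$ agree on coordinates $0,\dots,k$, and Lemma \ref{lem:B12}(3) yields $a=(\widehat{a_\sq})_k$.

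There is no real obstacle here: once Lemma \ref{lem:bar} is in hand, the entire argument is a careful bookkeeping of coordinates and rank bounds. The only mild subtlety is remembering that each coordinate $a[j]$ is already its own $1$-restriction, which makes the identification $(a[j])_{\pref 1}=a[j]$ legitimate and aligns the two coordinate descriptions.
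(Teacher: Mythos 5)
Your proof is correct and follows essentially the same route as the paper's: both establish that the two elements have the same coordinates via Lemma \ref{lem:bar} and then conclude by the ``in particular'' clause of Lemma \ref{lem:B12}(3). Your version merely makes explicit the rank bounds and the identification $(a[k])_{\pref 1}=a[k]$ that the paper leaves implicit.
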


\begin{proof}
 By the hypothesis and  Lemma \ref{lem:bar} we have that $a\star_{n+1} 1$ and $(\widehat{a_\sq})_n$ have the same coordinates. 
 The conclusion follows from Lemma \ref{lem:B12}(3).
 Moreover,  if $a$ has rank $n+1$, then $a$ and  $(\widehat{a_\sq})_k$ have the same coordinates, for all $k\geq n$. The conclusion again follows from Lemma  \ref{lem:B12}(3).
\end{proof}

  Let $\mathbf M$ be a cm-monoid. We define an algebra $\mathbf M^{\mathrm{ca}}=(M_{|1},q_n^{\mathrm{ca}},$ $\e_n^{\mathrm{ca}})_{n\geq 0}$ in the similarity type of clone algebras as follows:
  $q_n^{\mathrm{ca}}(a,b_0,\dots,b_{n-1})=(a\cdot^\mathbf M \widehat b_{n-1})_{\pref 1};\quad \e_n^{\mathrm{ca}} = 1^\mathbf M[n]$,
using the notation introduced above.

\begin{lemma}\label{lem:1hat}  Let $\mathbf M$ be a cm-monoid and  $E=(1[k]:k\in\omega)$. Then $\widehat E_k = 1$ for every $k$.
\end{lemma}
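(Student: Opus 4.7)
The plan is to prove $\widehat E_k = 1$ by checking that $\widehat E_k$ and $1$ have the same coordinates at every index, and then invoking Lemma~\ref{lem:B12}(3) to deduce equality from coordinate-agreement between two elements of sufficiently small rank. No induction on $k$ is actually needed: the general formula for the coordinates of a hatted sequence is already available in Lemma~\ref{lem:bar}, and the rank bound on hatted elements in Lemma~\ref{lem:8.3}.

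First I would apply Lemma~\ref{lem:bar} to the sequence $E = (1[j] : j \in \omega)$. Since $1[j] \in M_{|1} \subseteq M_{|j+1}$ for every $j$, the hypothesis of that lemma is satisfied, so
\[
\widehat E_k[j] = \begin{cases} (1[j])_{\pref 1} & \text{if } 0 \leq j \leq k, \\ 1[j] & \text{if } j > k. \end{cases}
\]

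Next I would observe that $(1[j])_{\pref 1} = 1[j]$ for every $j$. Indeed, by Lemma~\ref{lem:[k]}(iii) we have $1[j][0] = 1[j]$; unfolding the definition of the $0$-coordinate and using $\tau^0_0 = \iota$, this reads $1[j]_{\pref 1} = 1[j]$. Consequently, the two cases above collapse into one, and $\widehat E_k[j] = 1[j]$ for every $j \in \omega$.

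Finally I would conclude as follows: by Lemma~\ref{lem:8.3}, $\widehat E_k \in M_{|k+1}$, and the unit $1$ has rank $0$ by (B2), so $1 \in M_{|k+1}$ as well. Since $\widehat E_k$ and $1$ agree on all coordinates, in particular on the first $k+1$ of them, Lemma~\ref{lem:B12}(3) yields $\widehat E_k = 1$. There is no real obstacle in this argument: everything reduces to noticing the self-stability of $1[j]$ under $(\_)_{\pref 1}$, which is immediate from Lemma~\ref{lem:[k]}(iii).
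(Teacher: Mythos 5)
Your proof is correct, but it takes a genuinely different route from the paper's. The paper argues by direct induction on $k$ using the recursive definition of $\widehat E_k$: assuming $\widehat E_{k-1}=1$, it computes $\widehat E_k = 1\star_k\bar\tau^k_0(1[k]) = 1\star_k 1[k]$ (the last equality via Lemma~\ref{lem:B12}(4), which gives $\bar\tau^k_0(1[k])=1[k]$), and then $1\star_k(\bar\tau^k_0(1)\star_1 1)=1\star_k 1=1$ by (B3). You instead bypass the explicit induction by invoking the already-established coordinate formula of Lemma~\ref{lem:bar}, collapsing both cases via $(1[j])_{\pref 1}=1[j]$ (which holds because coordinates have rank $\leq 1$, or equivalently by Lemma~\ref{lem:[k]}(iii) together with $a[0]=a_{\pref 1}$), and then concluding from the rank bound of Lemma~\ref{lem:8.3} and the bounded-rank extensionality of Lemma~\ref{lem:B12}(3). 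All the cited lemmas are proved before this point, so there is no circularity; the induction has simply been absorbed into Lemma~\ref{lem:bar}. Your version is a clean illustration of the coordinate machinery, whereas the paper's is more elementary and self-contained, relying only on the axioms and one earlier identity; either is acceptable.
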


\begin{proof}
 The proof is by induction:  $\widehat E_k=\widehat E_{k-1}  \star_k \bar\tau^k_0 ( 1[k])=_{Ind.} 1  \star_k \bar\tau^k_0 ( 1[k])$\\ $=_{L. \ref{lem:B12}(4)} 1  \star_k 1[k] = 1\star_k (\bar\tau^k_0(1)\star_1 1)=_{(B3)} 1\star_k 1=1$.
\end{proof}

\begin{proposition} \label{th:ca-functor-correct}
Let $\mathbf M$ be a cm-monoid.
 Then the following conditions hold: 
 \begin{enumerate}
\item The algebra $\mathbf M^{\mathrm{ca}}$ is a  clone algebra.
\item If $\mathbf M$ is finite-dimensional, then $\mathbf M^{\mathrm{ca}}$ is finite-dimensional. 
\end{enumerate}
 \end{proposition}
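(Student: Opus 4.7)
The plan is to verify axioms (C1)--(C5) for $\mathbf M^{\mathrm{ca}}$ by a uniform strategy: compute the coordinates of both sides using Lemmas~\ref{lem:[k]}, \ref{lem:bar}, \ref{lem:1hat} and \ref{lem:[k]bis}, show that both sides belong to the same $M_{|r}$ (so they have a common rank bound), and then conclude equality via Lemma~\ref{lem:B12}(3). Throughout we freely use the identity $x_{\pref 1}=x$ for $x\in M_{|1}$ and the rank bound $M_{|n}\subseteq M_{|k}$ for $n\le k$ (Lemma~\ref{lem:emme}(2)).

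For (C1)--(C3) the computations are direct. By Lemma~\ref{lem:[k]bis}(1), $q_n^{\mathrm{ca}}(\e_i,\mathbf x)=(1[i]\cdot\widehat x_{n-1})_{\pref 1}=\widehat x_{n-1}[i]$, which by Lemma~\ref{lem:bar} equals $(x_i)_{\pref 1}=x_i$ when $0\le i\le n-1$, giving (C1), and equals $1[i]=\e_i^{\mathrm{ca}}$ when $i\ge n$, giving (C2). For (C3), the extension of the finite sequence $(\e_0,\ldots,\e_{n-1})$ is exactly the sequence $E=(1[k]:k\in\omega)$, so $\widehat E_{n-1}=1$ by Lemma~\ref{lem:1hat}, and $q_n^{\mathrm{ca}}(x,\e_0,\ldots,\e_{n-1})=(x\cdot 1)_{\pref 1}=x$ since $x\in M_{|1}$.

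For (C4) it suffices to prove that $\widehat y_{k-1}=\widehat y_{n-1}$, where both are computed from the extended sequence $(y_0,\ldots,y_{n-1},1[n],1[n+1],\ldots)$. Both elements lie in $M_{|k}$ (the first by Lemma~\ref{lem:8.3}, the second after lifting its rank), and by Lemma~\ref{lem:bar} both have $i$-th coordinate $y_i$ for $0\le i\le n-1$ and $1[i]$ otherwise; Lemma~\ref{lem:B12}(3) yields equality. For (C5), the core step is to show $\widehat y_{n-1}\cdot\widehat z_{n-1}=\widehat u_{n-1}$, where $u_i=q_n^{\mathrm{ca}}(y_i,\mathbf z)=(y_i\cdot\widehat z_{n-1})_{\pref 1}$. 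By Lemma~\ref{lem:[k]bis}(2), $(\widehat y_{n-1}\cdot\widehat z_{n-1})[i]=(\widehat y_{n-1}[i]\cdot\widehat z_{n-1})_{\pref 1}$; for $i<n$ this is $u_i$, and for $i\ge n$ this equals $(1[i]\cdot\widehat z_{n-1})_{\pref 1}=\widehat z_{n-1}[i]=1[i]$ by Lemmas~\ref{lem:[k]bis}(1) and~\ref{lem:bar}. Hence the two products share all coordinates with $\widehat u_{n-1}$; they both lie in $M_{|n}$ by Lemma~\ref{lem:emme2}(2) applied to $\widehat y_{n-1},\widehat z_{n-1}\in M_{|n}$; so by Lemma~\ref{lem:B12}(3) they are equal. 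Then, using associativity of $\cdot$ and Lemma~\ref{lem:emme2}(1) to absorb the inner $(-)_{\pref 1}$ on the left-hand side, both sides of (C5) collapse to $(x\cdot\widehat u_{n-1})_{\pref 1}$.

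For part (2), assume $a\in M_{|1}=M^{\mathrm{ca}}$ is finite dimensional in $\mathbf M$; we show $a$ has finite dimension in $\mathbf M^{\mathrm{ca}}$. Instantiating the defining property at $n=1$ and invoking Lemma~\ref{lem:D(a,n)}(2), there exists $m_0$ such that $D(a,1,j)$ holds for all $j\ge m_0$. Fix such a $j$, and compute $\widehat b_j$ for $b=(\e_0,\ldots,\e_{j-1},\e_{j+1},1[j+1],1[j+2],\ldots)$. By Lemma~\ref{lem:bar}, its coordinates are $1[i]$ for $i\ne j$ and $1[j+1]$ at position $j$; the element $1\star_j\bar\tau^j_0(1[j+1])\star_{j+1}1$ has the same coordinates (using Lemmas~\ref{lem:[k]} and~\ref{lem:B12}(4)) and the same rank $\le j+1$, so by Lemma~\ref{lem:B12}(3) we have $\widehat b_j=1\star_j\bar\tau^j_0(1[j+1])\star_{j+1}1$. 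Applying $D(a,1,j)$ with $b=\bar\tau^j_0(1[j+1])$ and $k=1$ gives $(a\cdot\widehat b_j)\star_1 a=a$, i.e.\ $(a\cdot\widehat b_j)[0]=a$; as both $(a\cdot\widehat b_j)_{\pref 1}$ and $a$ lie in $M_{|1}$ with common $0$-coordinate, Lemma~\ref{lem:B12}(3) yields $q_{j+1}^{\mathrm{ca}}(a,\e_0,\ldots,\e_{j-1},\e_{j+1})=a$. Hence $a$ is independent of $\e_j$ for every $j\ge m_0$, so $\dim(a)<m_0$.

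I expect (C5) to be the main obstacle, since it requires simultaneously controlling coordinates of a product in an m-monoid and exploiting the specific form of the hat construction to match $\widehat y_{n-1}\cdot\widehat z_{n-1}$ with $\widehat u_{n-1}$. The other axioms, and part (2), then follow by more routine coordinate bookkeeping.
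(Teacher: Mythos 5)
Your proof is correct, and its overall skeleton matches the paper's: reduce each axiom to a property of the hat construction $\widehat{(\,\cdot\,)}$, with (C1)--(C3) following from Lemmas \ref{lem:[k]bis}(1), \ref{lem:bar} and \ref{lem:1hat}, and the crux being the identities $\widehat c_{k-1}=\widehat b_{n-1}$ for (C4) and $\widehat y_{n-1}\cdot\widehat z_{n-1}=\widehat u_{n-1}$ for (C5). Where you diverge is in how these two identities are established: the paper proves each by an explicit induction with equational manipulations (for (C5), the invariant $\widehat d_i=(\widehat b_i\cdot\widehat c_{n-1})_{\pref\, i+1}$, using (L1), (L2), (B4) and Lemmas \ref{lem:b8b12}(1), \ref{lem:B12}(5)), whereas you compute the coordinates of both sides via Lemmas \ref{lem:bar} and \ref{lem:[k]bis}(2), bound the ranks (Lemmas \ref{lem:8.3} and \ref{lem:emme2}(2)), and conclude by the finite-rank extensionality principle of Lemma \ref{lem:B12}(3). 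Your route is shorter and arguably more transparent, at the cost of being non-equational; the paper's inductions keep everything inside the equational theory, which matters if one wants the argument to transfer to settings where coordinate extensionality is unavailable. In part (2) you also streamline slightly by verifying independence of $\e_j$ only against $b=\e_{j+1}$ (the literal definition), instantiating $D(a,1,j)$ at the specific element $\bar\tau^j_0(1[j+1])$ and $k=1$, rather than proving $q_{j+1}^{\mathrm{ca}}(a,\e_0,\ldots,\e_{j-1},b)=a$ for arbitrary $b\in M_{|1}$ as the paper does; by the remark following Lemma \ref{lem:ind1} the two are equivalent, so nothing is lost. All the individual steps check out (in particular the identification $\widehat b_j=1\star_j\bar\tau^j_0(1[j+1])\star_{j+1}1$ and the rank bound on the product in (C5)).
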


\begin{proof} (1) We prove the identities defining the variety of clone algebras.
\begin{itemize}
\item[(C1-C2)] Since $\mathbf M$ is a cm-monoid, then by Lemma \ref{lem:[k]bis}(2) $a[k]=  (1[k]\cdot a)_{\pref 1}$.
Then by Lemma \ref{lem:bar} we have:\\
 $q_n(1[k],b_0,\dots,b_{n-1})=(1[k]\cdot \widehat b_{n-1})_{\pref 1}= \widehat b_{n-1}[k]=\begin{cases}b_k&\text{if $0\leq k\leq n-1$}\\
 1[k]&\text{otherwise.}\end{cases}$
\item[(C3)]  $q_n(a,1[0],\dots,1[n-1])= (a\cdot \widehat E_{n-1})_{\pref 1}=_{L.\ref{lem:1hat}}(a\cdot 1)_{\pref 1}=a_{\pref 1}=a$.

\item[(C4)] Let $c\in (M_{|1})^\omega$ be the sequence $c=(b_0,\dots,b_{n-1},1[n],1[n+1],\dots)$. It is easy to prove that
$$\widehat c_k=\begin{cases}\widehat b_k&\text{if $0\leq k\leq n-1$}\\
  \widehat b_{n-1}&\text{if $k\geq n$.}\end{cases}$$
    Indeed, we prove by induction that $\widehat c_k=\widehat b_{n-1}$ for all $k\geq n$.
  The base case is as follows: 
  $\widehat c_n=\widehat c_{n-1}  \star_n \tau^n_0 ( 1[n])=_{Ind, L. \ref{lem:B12}(4)} \widehat b_{n-1}  \star_n 1[n]= \widehat b_{n-1}  \star_n (\bar\tau^n_0(1)\star_1 1)= \widehat b_{n-1}\star_n 1 =_{L.\ref{lem:8.3}} \widehat b_{n-1}$, and the inductive step is similar.
Therefore, for every $k>n$, we get:\\
$q_k(a,b_0,\dots,b_{n-1},1[n],\dots,1[k-1])= (a\cdot \widehat c_{k-1})_{\pref 1}= (a\cdot \widehat b_{n-1})_{\pref 1} = q_n(a,b_0,\dots,b_{n-1})$.
\item[(C5)] We prove that $q_n(q_n(a,\bar b),\bar c)=q_n(a,q_n(b_0,\bar c),\dots,q_n(b_{n-1},\bar c))$.
\[
\begin{array}{llll}
q_n(q_n(a,\bar b),\bar c)  &  = &  (q_n(a,\bar b)\cdot \widehat c_{n-1})_{\pref 1}& \\
  &  = &  [(a\cdot \widehat b_{n-1})_{\pref 1}\cdot  \widehat c_{n-1}]_{\pref 1} &\\
    &  = &  [a\cdot \widehat b_{n-1}\cdot  \widehat c_{n-1}]_{\pref 1} &\text{ by Lemma \ref{lem:emme2}(1)}\\
      &  = &  [a\cdot (\widehat b_{n-1}\cdot  \widehat c_{n-1})]_{\pref 1}.& \\
\end{array}
\]
Consider the sequence $d_i=(b_i\cdot \widehat c_{n-1})_{\pref 1}= q_n(b_i,\bar c)$ ($0\leq i\leq n-1$). Then we have
$q_n(a,q_n(b_0,\bar c),\dots,q_n(b_{n-1},\bar c))=(a\cdot \widehat d_{n-1})_{\pref 1}$.

\noindent We prove by induction that $\widehat d_i = (\widehat b_i\cdot \widehat c_{n-1})_{\pref i+1}$. The case $i=0$ is trivial.
\[
\begin{array}{llll}
(\widehat b_i\cdot \widehat c_{n-1})_{\pref i+1}&  = &((\widehat b_{i-1} \star_i \bar\tau^i_0(b_i))\cdot \widehat c_{n-1})\star_{i+1} 1&\\
&  = & (\widehat b_{i-1}\cdot \widehat c_{n-1})\star_i (\bar\tau^i_0(b_i)\cdot \widehat c_{n-1})\star_{i+1} 1 & \text{(RD)} \\
&  = & (\widehat b_{i-1}\cdot \widehat c_{n-1})_{\pref i}\star_i (\bar\tau^i_0(1) \cdot b_i\cdot \widehat c_{n-1})\star_{i+1} 1 & \text{(B1) and (LS)} \\
&  = & \widehat d_{i-1}\star_i \bar\tau^i_0(b_i\cdot \widehat c_{n-1})\star_{i+1} 1&\text{induction and (LS)}\\
&=& \widehat d_{i-1}\star_i (\bar\tau^i_0(b_i\cdot \widehat c_{n-1})\star_{i+1} 1)& \text{(B4)} \\
&=& \widehat d_{i-1}\star_i \bar\tau^i_0((b_i\cdot \widehat c_{n-1})\star_{i+1} 1)& \text{Lemma \ref{lem:b8b12}(1)} \\
&=& \widehat d_{i-1}\star_i \bar\tau^i_0((b_i\cdot \widehat c_{n-1})\star_1 1)& \text{Lemma \ref{lem:B12}(5)} \\
&=& \widehat d_{i-1}\star_i \bar\tau^i_0(d_i)&\\
&=& \widehat d_i.
\end{array}
\]
\end{itemize}
By Lemmas \ref{lem:emme2} and \ref{lem:8.3} we conclude: $\widehat d_{n-1} = (\widehat b_{n-1}\cdot \widehat c_{n-1})_{\pref n}= \widehat b_{n-1}\cdot \widehat c_{n-1}$.

(2) 
Let $a\in M_{|1}$ be finite-dimensional and $n=1$. Then there exists $m$ such that  for every $c\in M$ and $k\in\omega$, we have $a=(a\cdot (1\star_m c\star_{m+k}1))\star_1 a$.
Now we show that $a$ is independent of $\e_l$ for every $l\geq m$ in the clone algebra $\mathbf M^{\mathrm{ca}}$. In other words, we show that $q_{l+1}^{\mathrm{ca}}(a,\e_0,\dots,\e_{l-1},b)=a$ for every $l\geq m$ and $b\in M_{|1}$.
Consider the sequence $\hat b=(1[0],1[1],\dots,1[l-1], b)$.  For every $k\geq 1$, we have:
\begin{equation}\label{eq:cappuccio}\hat b_l =_{\mathrm{def}} 
\widehat b_{l-1}  \star_l \bar\tau^l_0 (b)=_{L.\ref{lem:1hat}}
1  \star_l \bar\tau^l_0 (b)=1  \star_l \bar\tau^l_0 (b)\star_{l+k} 1.
\end{equation}
The last equality above holds because $b$ is a fortiori of rank $\leq  l+k$, so that $b\star_{l+k} 1= b$, and $\bar\tau^l_0 (b)=\bar\tau^l_0 (b\star_{l+k} 1)=_{L.\ref{lem:b8b12}(1)} \bar\tau^l_0 (b)\star_{l+k} 1$.
Thus, 
\[
\begin{array}{llll}
      q_{l+1}^{\mathrm{ca}}(a,\e_0,\dots,\e_{l-1},b)
   &=  &  (a\cdot \widehat b_l)\star_1 1&\text{by def.}   \\
 &  =  & (a\cdot (1\star_l \bar\tau^l_0 (b)\star_{l+k} 1))\star_1 1  &
   \text{by (\ref{eq:cappuccio})}\\
 &   =&(a\cdot (1\star_l \bar\tau^l_0 (b)\star_{l+k} 1))\star_1 a\star_1 1&\text{by (B1)}\\ 
 & = & (a\cdot (1\star_l \bar\tau^l_0 (b)\star_{l+k} 1))\star_1 a    &\text{because $a$ has rank $\leq 1$}\\
 &   =&a&\text{by Lemma \ref{lem:D(a,n)}(2)}.\\
\end{array}
\]
\end{proof}

\begin{lemma}\label{rem:can}
  Let  $\mathbf M$ be a canonical cm-monoid with $1=(1_0,\ldots,1_n,\ldots)$. If
  $a=(a^i\in M_{|1} : i\in\omega)$, where $a^i=(a^i_0,1_1,\ldots,1_n,\ldots)$,
  then $(\widehat a )_n=(a^0_0,a^1_0,\ldots,a^n_0,1_{n+1},\ldots)$, for all $n$.
\end{lemma}

\begin{proof} The proof is by induction over $n$.
For $n=0$ the conclusion is immediate: $\widehat a_0 = a^0 =(a^0_0,1_1,\dots,1_n,\dots)$.
Assuming that the equality is true for $n$, we provide the proof for $n+1$:    $\widehat a_{n+1} = \widehat a_n  \star_{n+1} \bar\tau_0^{n+1} ( a^{n+1})=(a^0_0,a^1_0,\ldots,a^n_0,1_{n+1},\ldots)\star_{n+1}(1_{n+1},1_1,1_2,\dots,1_n,a^{n+1}_0,1_{n+2},\dots)=
  (a^0_0,a^1_0,\dots,a^n_0,a^{n+1}_0,1_{n+2},\dots)$. 
\end{proof}

We next examine how ``inverse" the constructions $(\_)^{\mathrm{cm}}$ and $(\_)^{\mathrm{ca}}$ are.
Let $\mathbf C=(C,q_n^\mathbf C,\e_n^\mathbf C)$ be a clone algebra. 
If $a\in C$, we denote by $\langle a\rangle$ the sequence
$(a,\e_1^\mathbf C,\dots,\e_n^\mathbf C,\ldots)$.

 \noindent  The algebra $(\mathbf C^{\mathrm{cm}})^{\mathrm{ca}}$ has universe $([1]_\equiv)_{|1}$ and  $\langle \e_n^\mathbf C\rangle$ as  $n$-th projection.
 The operator $q_n$ is defined as follows for every  $\langle a  \rangle, \langle b_i   \rangle \in ([1]_\equiv)_{|1}$, and $b=( \langle b_0   \rangle,\ldots,
    \langle b_{n-1}  \rangle, \langle \e_n^{\mathbf C}\rangle, \ldots)$:
  $$ \begin{array}{lll}
  &&q_n^{\mathrm{ca}}(\langle a\rangle,\langle b_0\rangle,\dots,\langle b_{n-1}\rangle)\\
       &=& (\langle a\rangle\cdot^{\mathrm{cm}}(\widehat{b})_{n-1})_{\pref 1}\quad \text{by Lemma \ref{rem:can}}\\
   &=& (q_n^\mathbf C(a,b_0,\dots,b_{n-1} ),q_n^\mathbf C(\e_1^\mathbf C  ,b_0,\dots,b_{n-1} ),\ldots, ,q_n^\mathbf C(\e_k^\mathbf C  ,b_0,\dots,b_{n-1} ),\ldots)_{\pref 1}  \\
&=& (q_n^\mathbf C(a,b_0,\dots,b_{n-1} ),b_1,\ldots,b_{n-1}, \e_n^\mathbf C,\ldots)_{\pref 1} \\ &=& \langle q_n^\mathbf C(a, b_0,\dots,b_{n-1})\rangle
  \end{array}$$
  We trivially have $\mathbf C \cong (\mathbf C^{\mathrm{cm}})^{\mathrm{ca}}$, because the map $a\mapsto \langle a\rangle$ is a bijection from $C$ onto $([1]_\equiv)_{|1}$ that preserves all the given operations.

  Conversely, let $\mathbf M$ be a cm-monoid.
The algebra $(\mathbf M^{\mathrm{ca}})^{\mathrm{cm}}$ has  the trace $[1_{\sq}]_\equiv$ on $M_{|1}$ as universe and, for every $a= (a_0,\dots,a_{k-1},1[k],\dots)\in [1_{\sq}]_\equiv$ and $b=(b_0,\dots,b_{n-1},1[n],\dots)\in [1_{\sq}]_\equiv$ (without loss of generality we can choose $n=k$), we have: 
\[
\begin{array}{lll}
 b\cdot^{\mathrm{cm}} a  &  = &  (q_k^{\mathrm{ca}}(b_0, a_0,\dots,a_{k-1}),\dots,q_k^{\mathrm{ca}}(b_{k-1}, a_0,\dots,a_{k-1}),1[k],\dots) \\
  &  = &  ((b_0\cdot^\mathbf M \widehat a_{k-1})_{\pref 1},\dots,(b_{k-1}\cdot^\mathbf M \widehat a_{k-1})_{\pref 1},1[k],\dots) \\
    &  = &  ((\widehat b_{k-1}\cdot^\mathbf M \widehat a_{k-1})[0],\dots,(\widehat b_{k-1}\cdot^\mathbf M \widehat a_{k-1})[k-1],1[k],\dots) \\
      &  = & (\widehat b_{k-1}\cdot^\mathbf M \widehat a_{k-1})_{\sq}   \\
\end{array}
\]

 \begin{remark} 
  Let $\mathbf M$ be a  cm-monoid. It is easy to show that  $(\mathbf M^{\mathrm{ca}})^{\mathrm{cm}}=((\mathbf M_{|\omega})^{\mathrm{ca}})^{\mathrm{cm}}$.
   \end{remark}

\begin{lemma}\label{prop:iso}
Let $\mathbf M$ be a  cm-monoid. Then the map $f:M_{|\omega}\to [1_\sq]_\equiv$, defined by $f(x)=x_\sq$ for every $x\in M_{|\omega}$, is an isomorphism from $\mathbf M_{|\omega}$ onto $(\mathbf M^{\mathrm{ca}})^{\mathrm{cm}}$.
\end{lemma}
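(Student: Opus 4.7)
The plan is to verify in turn that $f$ is well-defined, bijective, and compatible with each of the operations of an m-monoid: $\star_n$, $\bar\sigma$, and the multiplication. Most of the work will already have been packaged into earlier lemmas; the main point is to stitch them together correctly.

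For well-definedness, an element $x\in M_{|\omega}$ has some finite rank $n$, so $x\star_n 1=x$. By Lemma \ref{lem:[k]}(ii) this gives $x[k]=(x\star_n 1)[k]=1[k]$ for all $k\geq n$, so $x_\sq\equiv 1_\sq$ and thus $x_\sq\in [1_\sq]_\equiv$. Injectivity is immediate: if $x_\sq=y_\sq$ with $x,y\in M_{|\omega}$, pick $k$ large enough that both $x$ and $y$ lie in $M_{|k}$; since they have the same coordinates, Lemma \ref{lem:B12}(3) gives $x=y$. For surjectivity, let $a=(a_0,\ldots,a_{k-1},1[k],1[k+1],\ldots)\in [1_\sq]_\equiv$ with each $a_i\in M_{|1}$. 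I take the element $\widehat{a}_{k-1}\in M_{|k}\subseteq M_{|\omega}$. By Lemma \ref{lem:bar}, $\widehat{a}_{k-1}[i]=(a_i)_{\pref 1}=a_i$ for $i<k$ and $\widehat{a}_{k-1}[i]=1[i]$ for $i\geq k$, so $f(\widehat{a}_{k-1})=a$.

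Next I check that $f$ is a homomorphism of merge algebras. For the merge operation, using Lemma \ref{lem:[k]}(ii) the $i$-th coordinate of $x\star_n y$ is $x[i]$ when $i<n$ and $y[i]$ when $i\geq n$, which is precisely the $i$-th entry of $x_\sq \star_n y_\sq$ computed in the canonical merge algebra $(\mathbf M^{\mathrm{ca}})^{\mathrm{cm}}$. For permutations, Lemma \ref{lem:[k]}(i) gives $\bar\sigma(x)[k]=x[\sigma_k]$, so $\bar\sigma(x)_\sq=\bar\sigma(x_\sq)$ with $\bar\sigma$ interpreted canonically.

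Finally, for multiplication, the crucial step is the computation already performed in the paragraph preceding the lemma, which shows that for $a,b\in [1_\sq]_\equiv$ with $a,b\in M_{|k}^\sq$ (i.e.\ agreeing with $1_\sq$ from position $k$ on), one has $b\cdot^{\mathrm{cm}} a=(\widehat{b}_{k-1}\cdot^{\mathbf M}\widehat{a}_{k-1})_\sq$. I combine this with Lemma \ref{lem:a=hat}: given $x,y\in M_{|\omega}$, I choose $k$ large enough that both have rank $\leq k$, so that $x=(\widehat{x_\sq})_{k-1}$ and $y=(\widehat{y_\sq})_{k-1}$. Then
\[
f(x)\cdot^{\mathrm{cm}} f(y)=x_\sq\cdot^{\mathrm{cm}} y_\sq=\bigl((\widehat{x_\sq})_{k-1}\cdot^{\mathbf M}(\widehat{y_\sq})_{k-1}\bigr)_\sq=(x\cdot^{\mathbf M} y)_\sq=f(x\cdot^{\mathbf M} y),
\]
and similarly $f(1)=1_\sq$, the unit of $(\mathbf M^{\mathrm{ca}})^{\mathrm{cm}}$. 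The subtlest point, and probably the one deserving the most care in writing, is the bookkeeping of the index $k$ in the multiplication step: one must verify that the formula for $\cdot^{\mathrm{cm}}$ is independent of the chosen $k$ (which follows from axiom (C4) applied in the construction of $\mathbf M^{\mathrm{ca}}$, matched with Lemma \ref{lem:a=hat} on the $\mathbf M$ side) so that choosing a common $k$ that dominates both ranks is legitimate.
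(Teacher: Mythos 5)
Your proof is correct and follows essentially the same route as the paper's: the homomorphism properties for $\star_n$ and $\bar\sigma$ via Lemma \ref{lem:[k]}, the multiplication via the identification $y=(\widehat{y_\sq})_{k-1}$ from Lemma \ref{lem:a=hat} together with the coordinate formula of Lemma \ref{lem:[k]bis}(2) (which is exactly what the pre-lemma computation you cite encapsulates), and bijectivity from Lemmas \ref{lem:bar} and \ref{lem:B12}(3). If anything, you are slightly more explicit than the paper on well-definedness and surjectivity, which the paper leaves implicit in its final appeal to Lemma \ref{lem:B12}(3).
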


\begin{proof} We prove that $f$ is a homomorphism:
\begin{enumerate}
\item $f(x\star_n^\mathbf M y)=(x\star_n^\mathbf M y)_\sq =_{L.\ref{lem:[k]}}(x[0],\dots,x[n-1],y[n],\dots)=x_\sq \star_n^{\mathrm{cm}} y_\sq=f(x) \star_n^{\mathrm{cm}}f(y)$.

\item $f(\bar\sigma(x))=\bar\sigma(x)_\sq =_{L.\ref{lem:[k]}} (x[\sigma_i]:i\in\omega)=\bar\sigma^{\mathrm{cm}}(x_\sq)=\bar\sigma^{\mathrm{cm}}(f(x))$.

\item Assume that  $x,y$ have rank $k$. 
Then,  by  Lemma \ref{lem:bar}, $y$  and $(\widehat{y_\sq})_{k-1}$ have the same coordinates, and $y=(\widehat{y_\sq})_{k-1}$  by Lemma \ref{lem:a=hat}. Therefore, we have: 
\[
\begin{array}{lll}
 f(x) \cdot^{\mathrm{cm}} f(y)    &  = &  (x_\sq\cdot^{\mathrm{cm}} y_\sq) \\
  &  = & (q_k^{\mathrm{ca}}(x[0], y[0],\dots,y[k-1]),\dots,q_k^{\mathrm{ca}}(x[k-1], y[0],\dots,y[k-1]),1[k],\dots)  \\
  & =  & ((x[0]\cdot^\mathbf M (\widehat{y_\sq})_{k-1})_{\pref 1},\dots,(x[k-1]\cdot^\mathbf M (\widehat{y_\sq})_{k-1})_{\pref 1},1[k],\dots)  \\
& =  &((x[0]\cdot^\mathbf M y)_{\pref 1},\dots,(x[k-1]\cdot^\mathbf M y)_{\pref 1}, 1[k],\dots) \\
& =  &(x\cdot^\mathbf M y)_\sq, \qquad\text{by Lemma \ref{lem:[k]bis}(1)}\\
& =  &f(x\cdot^\mathbf M y). 
\end{array}
\]
\end{enumerate}

The homomorphism $f$ is an isomorphism since  $x\mapsto x_\sq\in [1_\sq]_\equiv$ is a bijection by Lemma \ref{lem:B12}(3).
\end{proof}

We have now almost all the ingredients to state our claimed adjunction. Let  $\mathbb{CA}$ and $\mathbb{CM}$ be the categories of clone algebras and clone algebra morphisms, and of cm-monoids and cm-monoid morphisms, respectively. We turn the constructions 
 $(\_)^{\mathrm{cm}}$ and $(\_)^{\mathrm{ca}}$ into functors $(\_)^{\mathrm{cm}}:\mathbb{CA}\rightarrow  \mathbb{CM}$ and $(\_)^{\mathrm{ca}}:\mathbb{CM}\rightarrow\mathbb{CA}$ as follows:
 \begin{itemize}
 \item Let $g:\mathbf M\rightarrow \mathbf N$ be a morphism of  cm-monoids. Then we set $g^{\mathrm{ca}}=g$, or more precisely $g^{\mathrm{ca}}$ is the restriction of $g$ to $M_{|1}$. Note that it is easy to check that, for $x\in M_{|1}$,  we have $g(x)\in N_{|1}$. 
\item Let $f:\mathbf C\rightarrow \mathbf B$ be a morphism of clone algebras. Then, for $a=(a_0,\dots,a_{n-1},\e_n^\mathbf C,\dots)\in[1]_\equiv$,  we set
$f^{\mathrm{cm}}(a)=(f(a_0),\dots,f(a_{n-1}),\e_n^\mathbf B,\dots)$.
This does not depend on the choice of the representative of $a$, since $f$ preserves all constants $\e_i$.
\end{itemize}

 \begin{theorem} \label{prop:ca-cm-adjunction} The functor  $(\_)^{\mathrm{cm}}:\mathbb{CA} \rightarrow \mathbb{CM}$ is left adjoint to the 
 functor   $(\_)^{\mathrm{ca}}:\mathbb{CM}\rightarrow\mathbb{CA}$. 
\end{theorem}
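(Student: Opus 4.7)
The plan is to package the two (iso)morphisms developed in the paragraphs preceding the theorem into a unit and a counit of an adjunction. For a clone algebra $\mathbf{A}$, I would take $\eta_{\mathbf{A}}:\mathbf{A}\to(\mathbf{A}^{\mathrm{cm}})^{\mathrm{ca}}$ to be the map $a\mapsto\langle a\rangle=(a,\e_1^{\mathbf{A}},\e_2^{\mathbf{A}},\dots)$, already shown to be an isomorphism of clone algebras in the discussion just above. For a cm-monoid $\mathbf{M}$, I would take $\epsilon_{\mathbf{M}}:(\mathbf{M}^{\mathrm{ca}})^{\mathrm{cm}}\to\mathbf{M}$ to be the inverse of the isomorphism of Lemma \ref{prop:iso} postcomposed with the inclusion $\mathbf{M}_{|\omega}\hookrightarrow\mathbf{M}$, so that $\epsilon_{\mathbf{M}}(x_{\sq})=x$ for all $x\in M_{|\omega}$. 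Note that $\epsilon_{\mathbf{M}}$ is always injective, and is an iso precisely when $\mathbf{M}$ is finitely ranked, which is the source of the restriction to an equivalence promised elsewhere in the paper.

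Before the triangular identities, I would check that both constructions are functorial and that $\eta,\epsilon$ are natural. Functoriality of $(\_)^{\mathrm{cm}}$ is routine since $h^{\mathrm{cm}}$ acts componentwise on representatives; for $(\_)^{\mathrm{ca}}$ the substantive point is that the restriction $g^{\mathrm{ca}}$ of a cm-monoid morphism $g$ preserves $\e_n^{\mathrm{ca}}=1[n]$ and $q_n^{\mathrm{ca}}(a,\bar{b})=(a\cdot\widehat{b}_{n-1})_{\pref 1}$. The former is immediate from $g$ commuting with $\bar\tau^n_0$ and $\star_1$, and the latter follows from $g(\widehat{b}_{n-1})=\widehat{g(b)}_{n-1}$, proved by a trivial induction. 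Naturality of $\eta$ reduces to $(h^{\mathrm{cm}})^{\mathrm{ca}}\langle a\rangle=\langle h(a)\rangle$, and naturality of $\epsilon$ follows from the fact that cm-monoid morphisms preserve both the rank filtration $M_{|\omega}$ and the coordinate map $x\mapsto x_{\sq}$.

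The heart of the proof is the two triangular identities. For the first, $\epsilon_{\mathbf{A}^{\mathrm{cm}}}\circ(\eta_{\mathbf{A}})^{\mathrm{cm}}=\mathrm{id}_{\mathbf{A}^{\mathrm{cm}}}$: given $a=(a_0,\dots,a_{n-1},\e_n^{\mathbf{A}},\dots)\in\mathbf{A}^{\mathrm{cm}}$, one has $(\eta_{\mathbf{A}})^{\mathrm{cm}}(a)=(\langle a_0\rangle,\dots,\langle a_{n-1}\rangle,\langle\e_n^{\mathbf{A}}\rangle,\dots)$, and a short calculation in the canonical cm-monoid $\mathbf{A}^{\mathrm{cm}}$, using Lemma \ref{lem:[k]}(i), shows that this sequence is exactly the coordinate sequence $a_{\sq}$ of $a$ computed inside $((\mathbf{A}^{\mathrm{cm}})^{\mathrm{ca}})^{\mathrm{cm}}$; since Proposition \ref{th:cm-functor-correct} gives $a\in(\mathbf{A}^{\mathrm{cm}})_{|\omega}$, Lemma \ref{prop:iso} then yields $\epsilon_{\mathbf{A}^{\mathrm{cm}}}(a_{\sq})=a$. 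For the second identity, an $a\in M_{|1}=\mathbf{M}^{\mathrm{ca}}$ is sent by $\eta_{\mathbf{M}^{\mathrm{ca}}}$ to $\langle a\rangle=(a,1[1],1[2],\dots)$, which by Lemma \ref{lem:[k]}(ii) is precisely $a_{\sq}$ (since $a$ has rank $\leq 1$), and so $(\epsilon_{\mathbf{M}})^{\mathrm{ca}}(\eta_{\mathbf{M}^{\mathrm{ca}}}(a))=\epsilon_{\mathbf{M}}(a_{\sq})=a$.

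The main obstacle I anticipate is the bookkeeping in the first triangular identity, specifically identifying the coordinator of the triple composite $((\mathbf{A}^{\mathrm{cm}})^{\mathrm{ca}})^{\mathrm{cm}}$ as $(\langle\e_i^{\mathbf{A}}\rangle:i\in\omega)$ and then using Lemma \ref{lem:[k]}(i) to see that the $i$-th coordinate of $a$ there is $\langle a_i\rangle$. Once this identification is settled, everything collapses via Lemma \ref{prop:iso}, and the remaining checks are straightforward unfoldings.
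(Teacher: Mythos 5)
Your proposal is correct and follows essentially the same route as the paper: the unit is the map $a\mapsto\langle a\rangle$, the counit is the inverse of the isomorphism of Lemma \ref{prop:iso} viewed as an embedding of $(\mathbf M^{\mathrm{ca}})^{\mathrm{cm}}$ into $\mathbf M$, and both triangular identities reduce, exactly as in the paper, to the observation that $(\eta_{\mathbf A})^{\mathrm{cm}}(a)$ and $\eta_{\mathbf M^{\mathrm{ca}}}(a)$ each coincide with the coordinate sequence $a_{\sq}$, i.e.\ with the isomorphism $f$ of Lemma \ref{prop:iso}. The extra remarks on functoriality, naturality, and the role of finite rank in upgrading the adjunction to an equivalence match what the paper treats before and after the theorem.
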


 \begin{proof}
 Let $\mathbf M$ be a cm-monoid. Then the map $g:[1_\sq]_\equiv\to M$, defined by
$g(x)=f^{-1}(x)$, where $f$ is the isomorphism defined in Lemma \ref{prop:iso},  is an embedding from 
${(\mathbf M}^{\mathrm{ca}})^{\mathrm{cm}}$ into  $\mathbf M$.
This map $g$  is a good candidate for being the counit of the adjunction, while the unit will be the isomorphism  
$\langle\_\rangle_C: \mathbf C \to  (\mathbf C^{\mathrm{cm}})^{\mathrm{ca}}$. We have to check the triangular identities of adjunction.
\begin{enumerate}
\item 
 $(g_M)^{\mathrm{ca}} \circ (\langle\_\rangle_{M^{\mathrm{ca}}}) = {\it id}$.
Let $x\in \mathbf M^{\mathrm{ca}}$, i.e., $x\in M_{|1}$. We have
$$\langle x\rangle_{M^{\mathrm{ca}}} = (x,1[1],1[2],\ldots)= x_\sq =f_M(x)\quad(\mbox{in}\;((\mathbf M^{\mathrm{ca}})^{\mathrm{cm}})^{\mathrm{ca}}),$$
and hence $(g_M)^{\mathrm{ca}} \circ (\langle\_\rangle_{M^{\mathrm{ca}}}) = g_M \circ f_M ={\it id}$.

\smallskip
\item
$( g_{C^{\mathrm{cm}}} \circ (\langle\_\rangle_C)^{\mathrm{cm}}) = {\it id}$.
Let $a \in \mathbf C^{\mathrm{cm}}$.
We have 
$(\langle a\rangle_C)^{\mathrm{cm}})=(l_0,l_1,\ldots)=a_\sq \quad(\mbox{in}\;((\mathbf C^{\mathrm{cm}})^{\mathrm{ca}})^{\mathrm{cm}})$,
where  $l_i=(a_i,\e_1,\e_2,\ldots)=a[i]$ (for all $i$).  Therefore $(\langle a\rangle_C)^{\mathrm{cm}}=a_\sq=f_{C^{\mathrm{cm}}}(a)$ (note that $a$ is in the domain of definition of $f_{C^{\mathrm{cm}}}$ since $\mathbf C^{\mathrm{cm}}$ is finitely ranked). 
We conclude as in the previous case.
\end{enumerate}
\end{proof}
\begin{corollary} \label{cor:ac-cm-adjunction} Let $\mathbb{ACL}$ be the category of abstract clones. 
The functor  $(\_)^{\mathrm{cm}}\circ(\_)^{\textrm{ac-ca}}:\mathbb{ACL} \rightarrow \mathbb{CM}$ is left adjoint to the 
 functor   $R_{(\_)}\circ (\_)^{\mathrm{ca}}:\mathbb{CM}\rightarrow\mathbb{ACL}$. 
\end{corollary}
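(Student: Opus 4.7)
The plan is to invoke the general principle that adjunctions compose, and apply it to the two adjunctions already established in the paper. By Theorem \ref{prop:ca-ac-adjunction}, we have $(\_)^{\textrm{ac-ca}} \dashv R_{(\_)}$ between $\mathbb{ACL}$ and $\mathbb{CA}$, with unit $\eta$ and counit $\epsilon$ as constructed there. By Theorem \ref{prop:ca-cm-adjunction}, we have $(\_)^{\mathrm{cm}} \dashv (\_)^{\mathrm{ca}}$ between $\mathbb{CA}$ and $\mathbb{CM}$, with unit $\langle\_\rangle$ and counit $g$. Since the categories and functors match up appropriately (the target of $(\_)^{\textrm{ac-ca}}$ is the source of $(\_)^{\mathrm{cm}}$, and dually for the right adjoints), the composed functors form an adjoint pair.

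Concretely, I would exhibit the bijection of hom-sets
\[
\mathbb{CM}\bigl((\mathbf{B}^{\textrm{ac-ca}})^{\mathrm{cm}},\,\mathbf{M}\bigr)\;\cong\;\mathbb{CA}\bigl(\mathbf{B}^{\textrm{ac-ca}},\,\mathbf{M}^{\mathrm{ca}}\bigr)\;\cong\;\mathbb{ACL}\bigl(\mathbf{B},\,R_{\mathbf{M}^{\mathrm{ca}}}\bigr)
\]
natural in $\mathbf{B}\in\mathbb{ACL}$ and $\mathbf{M}\in\mathbb{CM}$, where the two isomorphisms come respectively from Theorem \ref{prop:ca-cm-adjunction} and Theorem \ref{prop:ca-ac-adjunction}. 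The composition is a natural isomorphism, which is exactly the defining property of the claimed adjunction.

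Alternatively, and equivalently, one can construct the unit and counit of the composed adjunction directly from those of the two given adjunctions: the unit at an abstract clone $\mathbf{B}$ is the composite
\[
\mathbf{B}\;\xrightarrow{\;\eta_{\mathbf{B}}\;}\;R_{\mathbf{B}^{\textrm{ac-ca}}}\;\xrightarrow{\;R_{\langle\_\rangle_{\mathbf{B}^{\textrm{ac-ca}}}}\;}\;R_{((\mathbf{B}^{\textrm{ac-ca}})^{\mathrm{cm}})^{\mathrm{ca}}},
\]
and the counit at a cm-monoid $\mathbf{M}$ is the composite
\[
((R_{\mathbf{M}^{\mathrm{ca}}})^{\textrm{ac-ca}})^{\mathrm{cm}}\;\xrightarrow{\;(\epsilon_{\mathbf{M}^{\mathrm{ca}}})^{\mathrm{cm}}\;}\;(\mathbf{M}^{\mathrm{ca}})^{\mathrm{cm}}\;\xrightarrow{\;g_{\mathbf{M}}\;}\;\mathbf{M}.
\]
The triangular identities for the composed unit/counit follow formally from the triangular identities already verified in the proofs of Theorems \ref{prop:ca-ac-adjunction} and \ref{prop:ca-cm-adjunction}, by a routine diagram chase using naturality.

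There is no real obstacle here; the entire content of the corollary lies in recognising that we are in the standard setting where adjunctions compose. The proof reduces to a one-line citation of both theorems followed by this general categorical fact, with the explicit formulas above provided only if desired for clarity.
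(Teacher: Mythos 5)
Your proof is correct and matches the paper's own argument, which consists precisely of citing Theorems \ref{prop:ca-ac-adjunction} and \ref{prop:ca-cm-adjunction} and relying on the fact that adjunctions compose. The explicit hom-set bijection and unit/counit formulas you supply are consistent with the paper's constructions and merely make the routine composition explicit.
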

\begin{proof}
By Theorems \ref{prop:ca-ac-adjunction} and  \ref{prop:ca-cm-adjunction}.
\end{proof}

Finally, we prove that clone algebras are categorically equivalent to finitely ranked cm-monoids, and that abstract clones are categorically equivalent to finite-dimensional and finitely ranked cm-monoids.
Let 
 $\mathbb{CM}^{\mathrm{fr}}$ be the full subcategory of $\mathbb{CM}$  whose objects are the finitely ranked cm-monoids, and let $\mathbb{CM}^{\mathrm{fdr}}$ be the full subcategory of $\mathbb{CM}^{\mathrm{fr}}$ whose objects are  the finite-dimensional and finitely ranked cm-monoids.

\begin{theorem} \label{equivalence-ca-cmfr} The categories $\mathbb{CA}$ and $\mathbb{CM}^{\mathrm{fr}}$ are equivalent, through the functors  $(\_)^{\mathrm{cm}}$ and $(\_)^{\mathrm{ca}}$  (the former being corestricted  to, and the latter being restricted to $\mathbb{CM}^{\mathrm{fr}}$, respectively). This equivalence restricts to an equivalence between $\mathbb{CA}^{\mathrm{fd}}$ and  $\mathbb{CM}^{\mathrm{fdr}}$.
\end{theorem}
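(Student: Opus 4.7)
The plan is to leverage the adjunction established in Theorem \ref{prop:ca-cm-adjunction}, showing that when we corestrict/restrict appropriately, both the unit and counit become isomorphisms. The first step is to verify that the functors do restrict as claimed. By Proposition \ref{th:cm-functor-correct}(1), for any clone algebra $\mathbf A$, the cm-monoid $\mathbf A^{\mathrm{cm}}$ is canonical and finitely ranked, so $(\_)^{\mathrm{cm}}$ indeed corestricts to a functor $\mathbb{CA}\to\mathbb{CM}^{\mathrm{fr}}$. Conversely, $(\_)^{\mathrm{ca}}$ is already defined on all of $\mathbb{CM}$, and restricts trivially to $\mathbb{CM}^{\mathrm{fr}}\to\mathbb{CA}$.

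Next, I would check that both unit and counit of the adjunction become isomorphisms on these subcategories. The unit $\langle\_\rangle_A:\mathbf A\to (\mathbf A^{\mathrm{cm}})^{\mathrm{ca}}$ is already an isomorphism for every clone algebra $\mathbf A$, as observed just before Theorem \ref{prop:ca-cm-adjunction}. For the counit $g_M:(\mathbf M^{\mathrm{ca}})^{\mathrm{cm}}\to\mathbf M$, Lemma \ref{prop:iso} identifies its image with $\mathbf M_{|\omega}$ via the isomorphism $f_M$. Consequently, when $\mathbf M$ is finitely ranked, i.e. $M=M_{|\omega}$, the embedding $g_M$ becomes an isomorphism. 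These two facts together give the equivalence between $\mathbb{CA}$ and $\mathbb{CM}^{\mathrm{fr}}$ through $(\_)^{\mathrm{cm}}$ and $(\_)^{\mathrm{ca}}$.

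For the restriction to finite-dimensional objects, I would simply invoke the second parts of Propositions \ref{th:cm-functor-correct} and \ref{th:ca-functor-correct}: if $\mathbf A\in\mathbb{CA}^{\mathrm{fd}}$, then $\mathbf A^{\mathrm{cm}}$ is finite dimensional, so it lies in $\mathbb{CM}^{\mathrm{fdr}}$; if $\mathbf M\in\mathbb{CM}^{\mathrm{fdr}}$, then $\mathbf M^{\mathrm{ca}}$ is finite dimensional, so it lies in $\mathbb{CA}^{\mathrm{fd}}$. Since unit and counit remain isomorphisms when further restricted, we obtain the equivalence between $\mathbb{CA}^{\mathrm{fd}}$ and $\mathbb{CM}^{\mathrm{fdr}}$.

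There is no genuine obstacle here since all the heavy lifting has been done in Sections 7 and 8: canonicity and finite rank of $\mathbf A^{\mathrm{cm}}$, the isomorphism between $\mathbf M_{|\omega}$ and $(\mathbf M^{\mathrm{ca}})^{\mathrm{cm}}$, and the preservation of finite dimension in both directions. The only mildly subtle point is to make sure the counit, which is defined as $g_M=f_M^{-1}$ in the proof of Theorem \ref{prop:ca-cm-adjunction}, is indeed total (and not merely defined on $M_{|\omega}$) exactly when $\mathbf M$ is finitely ranked; this is immediate from Proposition \ref{prop:iso}.
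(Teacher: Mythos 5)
Your proposal is correct and follows essentially the same route as the paper's proof: both rely on Proposition \ref{th:cm-functor-correct} for finite rank (and part (2) together with Proposition \ref{th:ca-functor-correct}(2) for the finite-dimensional restriction), on Lemma \ref{prop:iso} to see that the counit becomes an isomorphism exactly when $\mathbf M$ is finitely ranked, and on the observation that $\mathbf A\cong(\mathbf A^{\mathrm{cm}})^{\mathrm{ca}}$ for the unit. Your phrasing in terms of the unit and counit of the adjunction of Theorem \ref{prop:ca-cm-adjunction} becoming isomorphisms is just a slightly more explicit rendering of the same argument.
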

\begin{proof} The second part of the statement is an immediate consequence of Propositions \ref{th:cm-functor-correct}(2) and  \ref{th:ca-functor-correct}(2).
We now prove the first part. We first note that $\mathbf C^{\mathrm{cm}}$ is finitely ranked: this follows readily from its definition (cf. Example \ref{abstract-concrete-trace}.) Hence, $(\_)^{\mathrm{cm}}$ can be viewed as a functor from $\mathbb{CA}$ to $\mathbb{CM}^{\mathrm{fr}}$.  

If $\mathbf M$ is a finitely ranked cm-monoid, i.e.,  $\mathbf M=  \mathbf M_{|\omega}$,  we have   $\mathbf M\cong(\mathbf M^{\mathrm{ca}})^{\mathrm{cm}}$ by Lemma \ref{prop:iso}. Conversely, we have already observed that
$\mathbf C \cong (\mathbf C^{\mathrm{cm}})^{\mathrm{ca}}$ for any clone algebra $\mathbf C$. 
\end{proof}

\begin{corollary} \label{equivalence-ac-cmfdr}
  The category  $\mathbb{ACL}$ of abstract clones and 
the category $\mathbb{CM}^{\mathrm{fdr}}$ are equivalent.
\end{corollary}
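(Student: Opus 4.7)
The plan is to obtain the equivalence by composing the two equivalences already established in the paper. By Theorem \ref{prop:ca-ac-adjunction}, the functors $(\_)^{\textrm{ac-ca}}\colon \mathbb{ACL}\to \mathbb{CA}^{\mathrm{fd}}$ and $R_{(\_)}\colon \mathbb{CA}^{\mathrm{fd}}\to \mathbb{ACL}$ form an equivalence of categories. By Theorem \ref{equivalence-ca-cmfr}, the functors $(\_)^{\mathrm{cm}}\colon \mathbb{CA}^{\mathrm{fd}}\to \mathbb{CM}^{\mathrm{fdr}}$ and $(\_)^{\mathrm{ca}}\colon \mathbb{CM}^{\mathrm{fdr}}\to \mathbb{CA}^{\mathrm{fd}}$ form an equivalence.

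Composing these, I would take the functor $(\_)^{\mathrm{cm}}\circ (\_)^{\textrm{ac-ca}}\colon \mathbb{ACL}\to\mathbb{CM}^{\mathrm{fdr}}$ together with its quasi-inverse $R_{(\_)}\circ (\_)^{\mathrm{ca}}\colon \mathbb{CM}^{\mathrm{fdr}}\to \mathbb{ACL}$ (these are exactly the functors appearing as the adjoint pair in Corollary \ref{cor:ac-cm-adjunction}, now restricted to $\mathbb{CM}^{\mathrm{fdr}}$). Since the composition of two categorical equivalences is again an equivalence, the result follows immediately.

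No real obstacle is expected: the only thing worth double-checking is that the restriction is well-typed on both sides. On one side, for any abstract clone $\mathbf{B}$, the clone algebra $\mathbf{B}^{\textrm{ac-ca}}$ lies in $\mathbb{CA}^{\mathrm{fd}}$ by Proposition \ref{prop:arity-dimension}, and then $(\mathbf{B}^{\textrm{ac-ca}})^{\mathrm{cm}}$ is finitely ranked (as noted in the proof of Theorem \ref{equivalence-ca-cmfr}) and finite-dimensional by Proposition \ref{th:cm-functor-correct}(2), so it indeed lies in $\mathbb{CM}^{\mathrm{fdr}}$. On the other side, for $\mathbf{M}\in\mathbb{CM}^{\mathrm{fdr}}$, the clone algebra $\mathbf{M}^{\mathrm{ca}}$ is finite-dimensional by Proposition \ref{th:ca-functor-correct}(2), and then $R_{\mathbf{M}^{\mathrm{ca}}}$ is an honest object of $\mathbb{ACL}$. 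Hence both composite functors are well-defined, and the natural isomorphisms witnessing the two equivalences paste together to give the desired natural isomorphisms for the composite, completing the proof.
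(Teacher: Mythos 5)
Your proof is correct and follows exactly the paper's own route: the paper's proof is the one-line composition of the equivalences from Theorem \ref{prop:ca-ac-adjunction} and Theorem \ref{equivalence-ca-cmfr}. Your additional well-typedness checks via Propositions \ref{prop:arity-dimension}, \ref{th:cm-functor-correct}(2) and \ref{th:ca-functor-correct}(2) are accurate and simply make explicit what the paper leaves implicit.
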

\begin{proof} 
By Theorems \ref{prop:ca-ac-adjunction} and  \ref{equivalence-ca-cmfr}.
\end{proof}

\section{Partial infinitary clone algebras}\label{sec:ca2} 
In this section, we introduce a generalisation of both $\mathsf{CA}$s and $\aleph_0$-$\mathsf{AC}$s, which we relate to extensional cm-monoids via an equivalence of categories.

\begin{definition}
 A  \emph{partial infinitary clone algebra} ($\mathsf{PICA}$) is a structure $\mathbf A= (A,q^\mathbf A,\e_n^\mathbf A, D^\mathbf A)_{n\geq 0}$ of universe $A$ such that $\e_n^\mathbf A\in A$, $D^\mathbf A\subseteq A^\omega$ and $q^\mathbf A$ is a partial operation of arity $\omega$ satisfying the following conditions:
 \begin{itemize}
 \item[(P1)] $D^\mathbf A$ is a trace on $A$;
 \item[(P2)] For all $a\in A$, $\mathrm{dom}(q^\mathbf A(a,-,\dots,-,\dots))=D^\mathbf A$;
  \item[(P3)]  For all $y,z\in D^\mathbf A$, we have $(q^\mathbf A(y_0, z), \dots,q^\mathbf A(y_n, z),\dots)\in D^\mathbf A$;
   \item[(P4)] $(\e_0^\mathbf A,\e_1^\mathbf A,\dots,\e_n^\mathbf A,\dots)\in D^\mathbf A$.
 \end{itemize}
 For all $a\in A$ and $y,z\in D^\mathbf A$, the following identities hold:
  \begin{itemize}
\item[(I1)] $q^\mathbf A(\e_i^\mathbf A, y)=y_i$;
\item[(I2)] $q^\mathbf A(a,\e_0^\mathbf A,\e_1^\mathbf A,\dots,\e_n^\mathbf A,\dots)=a$;
 \item[(I3)] $q^\mathbf A(q^\mathbf A(a, y), z)=q^\mathbf A(a,q^\mathbf A(y_0, z), \dots,q^\mathbf A(y_n, z),\dots)$.
\end{itemize}
\end{definition}

A  function $f:A\to B$ is a homomorphism from a $\mathsf{PICA}$ $\mathbf A$ to a $\mathsf{PICA}$ $\mathbf B$ if the following conditions hold: (a) $f(\e_n^\mathbf A)=\e_n^\mathbf B$; (b) $s\in D^\mathbf A \Rightarrow f^\omega(s)\in  D^\mathbf B$; (c) $s\in D^\mathbf A \Rightarrow f(q^\mathbf A(a, s))=q^\mathbf B(f(a), f^\omega(s))$.
The category of $\mathsf{PICA}$s will be denoted by $\mathbb{PICA}$.

\medskip

The most important examples of $\mathsf{PICA}$s are the following.
\begin{itemize}

\item ($D^\mathbf A= A^\omega$):  These $\mathsf{PICA}$s are exactly the abstract $\aleph_0$-clones (see Section \ref{sec:neu}).

\item ($D^\mathbf A= [1]_\equiv$), where $1=(\e_0^\mathbf A,\e_1^\mathbf A,\dots,\e_n^\mathbf A,\dots)$. In this case we recover, up to equivalence, the category of clone algebras, as shown below.
\end{itemize}
\begin{lemma}\label{ca-as-pica}
   The category of clone algebras and the full subcategory of $\mathbb{PICA}$
  formed by all $\mathsf{PICA}$s $\mathbf A$ such that $D^\mathbf A= [1]_\equiv$
  are equivalent. 
\end{lemma}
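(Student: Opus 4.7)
My plan is to build explicit functors $(\_)^{\mathrm{pica}}:\mathbb{CA}\to\mathbb{PICA}_{[1]_\equiv}$ and $(\_)^{\mathrm{ca}}:\mathbb{PICA}_{[1]_\equiv}\to\mathbb{CA}$ and show they are mutually inverse (on the nose on objects, naturally on morphisms). Given a clone algebra $\mathbf C=(C,q_n^\mathbf C,\e_n^\mathbf C)$, let $1=(\e_0^\mathbf C,\e_1^\mathbf C,\dots)$ and define $\mathbf C^{\mathrm{pica}}$ with the same underlying set $C$ and constants $\e_n^\mathbf C$, with $D^{\mathbf C^{\mathrm{pica}}}=[1]_\equiv$, and with
\[
q^{\mathbf C^{\mathrm{pica}}}(a,s)=q_k^\mathbf C(a,s_0,\ldots,s_{k-1})
\]
for any $k$ such that $s_i=\e_i^\mathbf C$ for all $i\geq k$; axiom (C4) guarantees that this does not depend on the choice of $k$. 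Properties (P1)--(P4) are immediate, and (I1)--(I3) follow from (C1)+(C2), (C3), and (C5) respectively, by choosing a common $k$ large enough for all the sequences involved (here (C4) is used once more to rewrite each $q_{k_i}^\mathbf C$ at the same arity $k$).

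In the reverse direction, given a $\mathsf{PICA}$ $\mathbf A$ with $D^\mathbf A=[1]_\equiv$, I set $\mathbf A^{\mathrm{ca}}$ to have universe $A$, the same $\e_n^\mathbf A$, and
\[
q_n^{\mathbf A^{\mathrm{ca}}}(a,b_0,\ldots,b_{n-1})=q^\mathbf A(a,b_0,\ldots,b_{n-1},\e_n^\mathbf A,\e_{n+1}^\mathbf A,\ldots),
\]
which is defined since the argument belongs to $[1]_\equiv=D^\mathbf A$. Axioms (C1) and (C2) follow directly from (I1); (C3) is (I2); (C4) holds by construction; and (C5) is a direct application of (I3), using (P3) to ensure that the required sequences lie in $D^\mathbf A$ and (I1) to identify the tail components $q^\mathbf A(\e_i^\mathbf A,\mathbf z)=z_i$.

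The two constructions are mutually inverse on objects. Starting from $\mathbf C$ and going round, the ``rebuilt'' $q_n$ is precisely $q^{\mathbf C^{\mathrm{pica}}}(a,b_0,\ldots,b_{n-1},\e_n^\mathbf C,\ldots)=q_n^\mathbf C(a,b_0,\ldots,b_{n-1})$. Starting from $\mathbf A$ and going round, for $s\in[1]_\equiv$ with $s_i=\e_i^\mathbf A$ for $i\geq k$, the rebuilt $q$ is $q^\mathbf A(a,s_0,\ldots,s_{k-1},\e_k^\mathbf A,\ldots)$, which coincides with $q^\mathbf A(a,s)$ itself by definition of $s$. For morphisms, a clone-algebra homomorphism $f:\mathbf C\to\mathbf D$ trivially preserves $\e_n$ and, by preservation of each $q_n$, preserves $q^{\mathbf C^{\mathrm{pica}}}$ on elements of the distinguished trace; conversely, a $\mathsf{PICA}$ morphism $g:\mathbf A\to\mathbf B$ preserves $\e_n$ and, when evaluated on sequences of the form $(b_0,\ldots,b_{n-1},\e_n^\mathbf A,\ldots)$, preserves each $q_n^{\mathbf A^{\mathrm{ca}}}$. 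The condition $s\in D^\mathbf A\Rightarrow g^\omega(s)\in D^\mathbf B$ is automatic, because $g$ fixes all $\e_i$, so $[1_\mathbf A]_\equiv$ is mapped into $[1_\mathbf B]_\equiv$.

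The step that requires the most care is (C5)$\leftrightarrow$(I3): one must pick a uniform arity $k$ (for the clone-algebra side) or carefully verify the sequences belong to $D^\mathbf A$ (for the $\mathsf{PICA}$ side) and then use (P3) and (I1) to pad tails of projections without affecting values. Everything else is essentially a bookkeeping translation between the ``variable-arity'' presentation $(q_n)_{n\geq 0}$ and the ``infinitary-with-tail'' presentation $(q,D)$.
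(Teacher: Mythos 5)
Your proposal is correct and follows exactly the interdefinability the paper uses ($q_n(a,\mathbf b)=q(a,\mathbf b,\e_n,\ldots)$ one way and $q(a,s)=q_k(a,s_0,\ldots,s_{k-1})$ for eventually-projection sequences the other way); the paper merely states this translation in one line and leaves the axiom-by-axiom verification implicit, whereas you carry it out in detail, correctly invoking (C4) for well-definedness and matching (I1)--(I3) to (C1)+(C2), (C3), and (C5).
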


\begin{proof} 
 The operator $q$ and the operators $q_n$ are interdefinable as follows: if $q$ is given, then we set
  $q_n(a,b_0,\ldots,b_{n-1})=q(a,b_0,\ldots,b_{n-1},\e_n^\mathbf A,\ldots)$, and if
  the $q_n$s are given, then we set $q(a,b_0,\ldots,b_n,\ldots)=q_k(a,b_0,\ldots,b_{k-1})$ whenever  $b_i=\e_i^\mathbf A$ for all $i\geq k$. \end{proof}

Given a $\mathsf{PICA}$ $\mathbf A=(A,q^\mathbf A,\e_i^\mathbf A,D^\mathbf A)$ and an element $a\in A$,
we say that $a$ \emph{is $\omega$-finite-dimensional} if there exists $n$ such that 
$q^\mathbf A(a,z[\e_0,\dots,\e_{n-1}])= a$ for every $z\in D^\mathbf A$. We denote by $\mathbf A_{\omega\mathrm{fin}}$ the set of $\omega$-finite-dimensional elements of a $\mathsf{PICA}$ $\mathbf A$.
The $\mathsf{PICA}$ $\mathbf A$ is \emph{$\omega$-finite-dimensional} if every element of $A$ is such.

\medskip 
We now show how to associate a cm-monoid with a partial infinitary clone algebra.

\begin{definition}
Let $\mathbf A=(A,q^\mathbf A, \e_n^\mathbf A, D^\mathbf A)$ be a $\mathsf{PICA}$.
We define an algebra $\mathbf A^{\mathrm{ecm}}=(\mathbf A^{\mathrm{ecm}}_0,\mathbf A^{\mathrm{ecm}}_1)$,
where $\mathbf A^{\mathrm{ecm}}_1$ is the canonical merge algebra of universe $D^\mathbf A\subseteq A^\omega$ (see Lemma \ref{lem:tra}). The algebra 
$\mathbf A^{\mathrm{ecm}}_0=(D^\mathbf A,\cdot^{\mathrm{ecm}},1^{\mathrm{ecm}})$ is defined  as follows, for every $ a,  b\in D^\mathbf A$: 
\begin{equation} \label{product-from-pica} b\cdot^{\mathrm{ecm}} a = (q^\mathbf A(b_0, a),\dots,q^\mathbf A(b_n, a),\dots);\quad 1^{\mathrm{ecm}}=(\e_0^\mathbf A,\dots,\e_n^\mathbf A,\dots).
\end{equation}
\end{definition}
Note that, by (P3), the operation $\cdot^{\mathrm{ecm}}$ is well-defined.

\begin{proposition}\label{thm:pic-rep}  Let $\mathbf A=(A,q^\mathbf A, \e_n^\mathbf A, D^\mathbf A)$ be a $\mathsf{PICA}$. Then we have:
\begin{enumerate}
\item The algebra $\mathbf A^{\mathrm{ecm}}$ is an extensional  cm-monoid.
\item  If $\mathbf A$ is $\omega$-finite-dimensional, then the cm-monoid $\mathbf A^{\mathrm{ecm}}$ is $\omega$-finite-dimensional. 
\end{enumerate}

\end{proposition}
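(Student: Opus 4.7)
The proof will be a direct verification of the axioms, relying on the pointwise nature of the definition (\ref{product-from-pica}) together with axioms (I1)--(I3) and (P1)--(P4).

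For part (1), I will first check that $\mathbf A^{\mathrm{ecm}}_0$ is a well-defined monoid. Well-definedness of $\cdot^{\mathrm{ecm}}$ (i.e., the result lies in $D^\mathbf A$) follows from (P2) and (P3); that $1^{\mathrm{ecm}} \in D^\mathbf A$ is (P4). The unit laws $1 \cdot^{\mathrm{ecm}} a = a$ and $a \cdot^{\mathrm{ecm}} 1 = a$ are obtained by applying (I1) and (I2) coordinatewise. Associativity $(a \cdot^{\mathrm{ecm}} b) \cdot^{\mathrm{ecm}} c = a \cdot^{\mathrm{ecm}} (b \cdot^{\mathrm{ecm}} c)$ follows coordinatewise from (I3). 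That $\mathbf A^{\mathrm{ecm}}_1$ is a canonical merge algebra of universe $D^\mathbf A$ follows from Lemma \ref{lem:tra} and (P1). Right distributivity (L1) is immediate from the pointwise formula for $\cdot^{\mathrm{ecm}}$ and the definition of $\star_n$ on $\omega$-sequences, and the cm-monoid axiom (L2), $\bar\sigma(a) \cdot^{\mathrm{ecm}} b = \bar\sigma(a \cdot^{\mathrm{ecm}} b)$, follows from a straightforward computation: both sides equal $(q^\mathbf A(a_{\sigma_i}, b) : i \in \omega)$. Extensionality is then automatic since $\mathbf A^{\mathrm{ecm}}$ is canonical, by Lemma \ref{lem:coto}(1).

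For part (2), fix $a \in D^\mathbf A$; we must show that $a$ is $\omega$-finite dimensional in the cm-monoid $\mathbf A^{\mathrm{ecm}}$. Given $n \in \omega$, we need some $m$ such that $(a \cdot^{\mathrm{ecm}} (1 \star_m b)) \star_n a = a$ for every $b \in D^\mathbf A$. By the definition of $\star_n$, this reduces to showing $q^\mathbf A(a_i, 1 \star_m b) = a_i$ for all $i < n$. Note that $1 \star_m b = (\e_0^\mathbf A, \ldots, \e_{m-1}^\mathbf A, b_m, b_{m+1}, \ldots) = (1 \star_m b)[\e_0^\mathbf A,\dots,\e_{m-1}^\mathbf A]$, and $1 \star_m b \in D^\mathbf A$ since $D^\mathbf A$ is a trace and $1\star_m b \equiv b$. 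By hypothesis, each $a_i$ is $\omega$-finite dimensional in $\mathbf A$, so there is $n_i$ such that $q^\mathbf A(a_i, z[\e_0^\mathbf A, \ldots, \e_{n_i-1}^\mathbf A]) = a_i$ for every $z \in D^\mathbf A$. Set $m = \max\{n_0, \ldots, n_{n-1}\}$. For any $b \in D^\mathbf A$ and $i < n$, taking $z = 1 \star_m b$ one checks that $z[\e_0^\mathbf A, \ldots, \e_{n_i-1}^\mathbf A] = 1 \star_m b$ (since all coordinates below $m$ are already $\e_j^\mathbf A$), so $q^\mathbf A(a_i, 1 \star_m b) = a_i$, as required.

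No step presents a serious obstacle: the argument is essentially bookkeeping once the pointwise definition of $\cdot^{\mathrm{ecm}}$ is unfolded. The only subtle point is the uniformisation in part (2), namely taking the maximum of the bounds $n_i$ provided by the $\omega$-finite dimensionality of each of the first $n$ coordinates of $a$, which requires observing that the PICA notion ``$z[\e_0^\mathbf A,\dots,\e_{k-1}^\mathbf A]$'' matches precisely the merge-algebraic expression $1 \star_k z$ used in the definition of $D_\omega(a,n,m)$ in Section 7.
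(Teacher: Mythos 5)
Your proof is correct and follows essentially the same route as the paper's: a coordinatewise verification of the monoid laws via (I1)--(I3), right distributivity and (L2) by unfolding the pointwise product, extensionality via canonicity and Lemma \ref{lem:coto}, and for part (2) the identification of $1\star_m b$ with $b[\e_0^\mathbf A,\dots,\e_{m-1}^\mathbf A]$ followed by an appeal to the $\omega$-finite dimensionality of the coordinates $a_i$. The only difference is cosmetic: you make explicit the uniformisation $m=\max\{n_0,\dots,n_{n-1}\}$, which the paper leaves implicit.
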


\begin{proof} (1) We start by proving  that $(\mathbf A^{\mathrm{ecm}})_0$ is a monoid and that the right distributivity holds.
Let $x,y,z\in D^\mathbf A$.
\begin{enumerate}
\item[(i)]  $( x \cdot 1)=(q(x_0,1),\dots,q(x_n,1),\dots)=_{(I2)}  x$.
\item[(ii)] $(1 \cdot  x)=(q(\e_0, x),\dots,q(\e_n, x),\dots)=_{(I1)}  x$.
\item[(iii)] 
$ ( x \cdot  y)\cdot z  
   =  (q(x_i, y):i\in\omega) \cdot  z 
    =   (q(q (x_i, y), z):i\in\omega) 
     =_{(I3)}  (q(x_i,(q(y_j, z):j\in\omega)) :i\in\omega) 
        =  (q(x_i, y\cdot  z) :i\in\omega) 
       =   x \cdot ( y\cdot  z)  $.
\item[(iv)] $(x\star_n y)\cdot z=(q((x\star_n y)_0, z),\dots,q((x\star_n y)_n, z),\dots)$\\
$=(q (x_0, z),\dots,q (x_{n-1}, z),q (y_n, z)\dots) = (x\cdot z) \star_n( y \cdot z)$.
\end{enumerate}
$\mathbf A^{\mathrm{ecm}}$ satisfies (LS), because $\bar\sigma(x)=(x_{\sigma_i}:i\in\omega)=(\dots,q (\e_{\sigma_i},x),\dots)=(\e_{\sigma_i}:i\in\omega)\cdot x=\bar\sigma(1)\cdot x$. 

(2) We need to prove that every $a\in D^\mathbf A$ is $\omega$-finite-dimensional in the cm-monoid $\mathbf A^{\mathrm{ecm}}$. This means that, for every $n$, there exists $m$ such that
 $(a\cdot^{\mathrm{ecm}} (1\star_m^{\mathrm{ecm}} b))\star_n^{\mathrm{ecm}} a = a\quad \text{for every $b\in D^\mathbf A$}
$.
This is equivalent to show that, for every $0\leq i<n$,
$(a\cdot^{\mathrm{ecm}} (1\star_m^{\mathrm{ecm}} b))_i=q^\mathbf A(a_i,b[\e_0,\dots,\e_{m-1}])=a_i$ for some $m$. 
The existence of such an $m$ is guaranteed by the fact that $a_i$ ($0\leq i<n$) is $\omega$-finite-dimensional in the $\mathsf{PICA}$ $\mathbf A$.
\end{proof}

The cm-monoid $\mathbf A^{\mathrm{ecm}}$ is also faithful and noncommutative.

The correspondence $(\_)^{\mathrm{ecm}}$ mapping a $\mathsf{PICA}$ $\mathbf A$ into the cm-monoid $\mathbf A^{\mathrm{ecm}}$ establishes that $(\_)^{\mathrm{ecm}}$ is functorial on objects. To see that it is functorial on arrows, for every homomorphism $f:\mathbf A\to \mathbf B$ of $\mathsf{PICA}$s, we define
$f^{\mathrm{ecm}}=(f^\omega)_{|D^\mathbf A}: D^{\mathbf A}\to  D^{\mathbf B} $. It is easy to see that  $f^{\mathrm{ecm}}$ is well defined
and that it is a homomorphism of cm-monoids.

\begin{lemma}\label{lem:funct}
  The correspondence $(\_)^{\mathrm{ecm}}$ is a functor from
  $\mathbb{PICA}$ to $\mathbb{CM}$, preserving $\omega$-finite dimensionality.
\end{lemma}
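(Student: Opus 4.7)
The plan is to combine routine pointwise computations on $\omega$-sequences with the already-established object-level result. Since Proposition \ref{thm:pic-rep}(1) establishes that $\mathbf A^{\mathrm{ecm}}$ is a cm-monoid and Proposition \ref{thm:pic-rep}(2) gives the preservation of $\omega$-finite dimensionality, what remains is (a) to check that $f^{\mathrm{ecm}}$ is a cm-monoid homomorphism whenever $f$ is a $\mathsf{PICA}$ homomorphism, and (b) to verify that $(\_)^{\mathrm{ecm}}$ respects identities and composition.

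First I would check well-definedness: the second clause of the definition of a $\mathsf{PICA}$ homomorphism states precisely that $s\in D^\mathbf A$ implies $f^\omega(s)\in D^\mathbf B$, so $f^{\mathrm{ecm}} := (f^\omega)_{|D^\mathbf A}$ is a well-defined function from $D^\mathbf A$ to $D^\mathbf B$. Next, preservation of the merge reduct is immediate from the pointwise definition of the canonical operations: for $x,y\in D^\mathbf A$ and $\sigma\in S_\omega$ one computes
\[
f^\omega(x\star_n y)= f^\omega(x_0,\dots,x_{n-1},y_n,\dots)=f^\omega(x)\star_n f^\omega(y),
\]
\[
f^\omega(\bar\sigma(x))=(f(x_{\sigma_i}):i\in\omega)=\bar\sigma(f^\omega(x)),
\]
and $f^\omega(1^{\mathrm{ecm}}_\mathbf A)=(f(\e_i^\mathbf A):i\in\omega)=(\e_i^\mathbf B:i\in\omega)=1^{\mathrm{ecm}}_\mathbf B$ by the first clause of the definition of a $\mathsf{PICA}$ homomorphism. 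For the monoid multiplication, the third clause of the definition applied coordinatewise gives
\[
f^\omega(b\cdot^{\mathrm{ecm}} a)_i=f(q^\mathbf A(b_i,a))=q^\mathbf B(f(b_i),f^\omega(a))=(f^\omega(b)\cdot^{\mathrm{ecm}} f^\omega(a))_i,
\]
for every $i\in\omega$, so that $f^{\mathrm{ecm}}$ preserves $\cdot^{\mathrm{ecm}}$.

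Functoriality is then trivial: $(\mathrm{id}_\mathbf A)^\omega$ is the identity on $A^\omega$ and restricts to the identity on $D^\mathbf A$, while $(g\circ f)^\omega=g^\omega\circ f^\omega$ restricts, for any composable pair $f:\mathbf A\to\mathbf B$ and $g:\mathbf B\to\mathbf C$ of $\mathsf{PICA}$ homomorphisms, to the composition $g^{\mathrm{ecm}}\circ f^{\mathrm{ecm}}:D^\mathbf A\to D^\mathbf C$. Finally, the claim on $\omega$-finite dimensionality is exactly Proposition \ref{thm:pic-rep}(2). There is no real obstacle here; the only thing worth highlighting is that every nontrivial verification is a single-line pointwise computation that mirrors the ingredient supplied by the corresponding clause of the definition of a $\mathsf{PICA}$ homomorphism.
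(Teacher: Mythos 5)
Your proof is correct and matches what the paper intends: the paper states this lemma without proof, remarking only that "it is easy to see" that $f^{\mathrm{ecm}}$ is well defined and a homomorphism, and your pointwise verifications of well-definedness, preservation of $\star_n$, $\bar\sigma$, the unit, and $\cdot^{\mathrm{ecm}}$ (each from the corresponding clause of the $\mathsf{PICA}$-homomorphism definition), together with the appeal to Proposition \ref{thm:pic-rep} for the object-level claims, are exactly the omitted routine checks. Nothing is missing.
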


We now go in the other direction, from extensional cm-monoids to partial infinitary clone algebras.

Let $\mathbf M$ be an extensional cm-monoid. We define a  structure $\mathbf M^{\mathrm{pica}}=(M_{|1},q^{\mathbf M},\e_k^{\mathbf M}, D^\mathbf M)$ 
on the set of elements of $M$ of rank $\leq 1$ as follows: 
 \begin{itemize}
\item[(i)] $\e_k^{\mathbf M} = 1[k]$.
\item[(ii)] $D^\mathbf M= M_{\sq}$.
\item[(iii)] For every $a\in M_{|1}$, $\mathrm{dom}(q^{\mathbf M}(a,-,\dots,-,\dots))= M_{\sq}$ and 
$q^{\mathbf M}(a,b[0],\dots,b[n],\dots)=(a\cdot b)_{\pref 1}$.
\end{itemize}

\begin{proposition} Let $\mathbf M$ be an extensional cm-monoid. Then we have:
\begin{enumerate}
\item The structure $\mathbf M^{\mathrm{pica}}$ is a $\mathsf{PICA}$.
\item If $\mathbf M$ is $\omega$-finite-dimensional, then so is $\mathbf M^{\mathrm{pica}}$.
\end{enumerate}
\end{proposition}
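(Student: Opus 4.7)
The plan is to verify in turn the structural axioms (P1)--(P4), then the identities (I1)--(I3), and finally the preservation of $\omega$-finite dimensionality. Throughout, I identify any $b\in M$ with its coordinate sequence $b_{\sq}\in M_{\sq}$, which is legitimate by the extensionality of $\mathbf M$ (cf.\ Lemma~\ref{lem:coto}).

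For the structural axioms, (P4) is immediate since $(\e_0^\mathbf M,\e_1^\mathbf M,\ldots)=1_{\sq}$ lies in $M_{\sq}=D^\mathbf M$, and (P1) is the content of the remark following Corollary~\ref{lem:4.16} stating that $M_{\sq}$ is a trace on $M_{|1}$. For (P2), I would note that if $b_{\sq}=b'_{\sq}$ then $b=b'$ by extensionality, so the prescription $q^\mathbf M(a,b_{\sq})=(a\cdot b)_{\pref 1}$ is independent of the chosen representative. The key observation for (P3) is: given $y=c_{\sq}$ and $z=d_{\sq}$ in $D^\mathbf M$, Lemma~\ref{lem:[k]bis}(2) yields $q^\mathbf M(y_i,z)=(c[i]\cdot d)_{\pref 1}=(c\cdot d)[i]$, so the resulting sequence is precisely $(c\cdot d)_{\sq}\in M_{\sq}$.

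Turning to the identities, (I1) follows from Lemma~\ref{lem:[k]bis}(1), since $q^\mathbf M(1[i],b_{\sq})=(1[i]\cdot b)_{\pref 1}=b[i]=y_i$; while (I2) collapses to $q^\mathbf M(a,1_{\sq})=(a\cdot 1)_{\pref 1}=a$, using that $a\in M_{|1}$. The main obstacle is the associativity axiom (I3). Writing $y=b_{\sq}$ and $z=d_{\sq}$, the left-hand side rewrites as $((a\cdot b)_{\pref 1}\cdot d)_{\pref 1}$, which collapses to $(a\cdot(b\cdot d))_{\pref 1}$ by Lemma~\ref{lem:emme2}(1) combined with monoid associativity. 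The right-hand side, reusing the computation done for (P3), equals $q^\mathbf M(a,(b\cdot d)_{\sq})=(a\cdot(b\cdot d))_{\pref 1}$, matching the left-hand side.

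For part (2), I would fix $a\in M_{|1}$ and instantiate the clause $\forall n\,\exists m\,D_\omega(a,n,m)$ at $n=1$, obtaining some $m$ such that $(a\cdot(1\star_m b))\star_1 a=a$ for every $b\in M$. Inspecting the $0$-th coordinate and using $a\in M_{|1}$ then gives $(a\cdot(1\star_m b))_{\pref 1}=a$. For an arbitrary $z=c_{\sq}\in D^\mathbf M$, Lemma~\ref{lem:[k]}(ii) identifies $z[\e_0^\mathbf M,\ldots,\e_{m-1}^\mathbf M]$ with $(1\star_m c)_{\sq}$, whence $q^\mathbf M(a,z[\e_0^\mathbf M,\ldots,\e_{m-1}^\mathbf M])=(a\cdot(1\star_m c))_{\pref 1}=a$, showing that $a$ is $\omega$-finite dimensional in $\mathbf M^{\mathrm{pica}}$.
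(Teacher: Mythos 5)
Your proof is correct and follows essentially the same route as the paper's: the same lemmas (Lemma \ref{lem:coto} / the trace property of $M_{\sq}$ for (P1), Lemma \ref{lem:[k]bis} for (P3), (I1) and (I3), Lemma \ref{lem:emme2}(1) for collapsing the double restriction in (I3), and the $n=1$ instantiation of $D_\omega$ for part (2)). The only cosmetic difference is that you pass from $(a\cdot(1\star_m b))\star_1 a=a$ to $(a\cdot(1\star_m b))_{\pref 1}=a$ by reading off the $0$-th coordinate, where the paper inserts $a=a\star_1 1$ and uses (B1)/(B3) directly; both justifications are valid.
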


\begin{proof} We first prove that $\mathbf M^{\mathrm{pica}}$ is a $\mathsf{PICA}$.
\begin{enumerate}

\item[(P1)] By Lemma \ref{lem:coto}, $\mathbf M$ is isomorphic to a canonical cm-monoid of universe  $M_{\sq}$. By Lemma \ref{lem:tra}  $M_{\sq}$ is a trace on 
$M_{|1}$.

\item[(P2)] By definition of $q^{\mathbf M}$.

\item[(P3)] 
Let $a,b\in M$.  Recall that $b_\sq$ is a shorthand for $(b[0],b[1],\dots)$.
We show that\\ $(q(a[0],b_\sq),\dots,q(a[n],b_\sq),\dots)\in M_{\sq}$.
Indeed, $q(a[n],b_\sq)= (a[n]\cdot b)_{\pref 1}=_{L.\ref{lem:[k]bis}(1)} (a\cdot b)[n]$.
Then we conclude that $(q(a[0],b_\sq),\dots,q(a[n],b_\sq),\dots) = (a\cdot b)_\sq \in M_{\sq}$.

\item[(I1)]  $q(\e_k,b[0],\dots,b[n],\dots)=(1[k] \cdot b)_{\pref 1} =(\bar\tau^k_0(1)_{\pref 1}\cdot b)_{\pref 1} =_{L. \ref{lem:emme2}(1)} (\bar\tau^k_0(1)\cdot b)_{\pref 1} =b[k]$ by Lemma
  \ref{lem:[k]bis}(2).

\item[(I2)] $q(a,\e_0,\dots,\e_n,\dots)= (a\cdot 1)_{\pref 1}=a_{\pref 1}=a$, because $\e_n=1[n]$
  and by hypothesis $a_{ \pref 1}=a$.

\item[(I3)] $q(q(a, b_\sq),  c_\sq)) = ((a\cdot b)_{\pref 1} \cdot c)_{\pref 1}=_{L. \ref{lem:emme2}(1)} (a\cdot b \cdot c)_{\pref 1}= (a\cdot ( b \cdot  c))_{\pref 1}=$\\
$  q(a,(b\cdot c)[0],(b\cdot c)[1],\ldots)=
q(a,(b[0]\cdot c)_{\pref 1},(b[1]\cdot c)_{\pref 1},\ldots)=$\\
$  q(a,q(b [0], c_\sq),q(b [1],  c_\sq),\dots)$, because 
$(b \cdot c)[n] = (b[n] \cdot c)_{\pref 1} = q(b[n], c_\sq)$. 
\end{enumerate}
We now prove the second part of the statement. Let 
$a\in M_{|1}$. Since $a$ is $\omega$-finite-dimensional, there exists $m$ such that
$(a\cdot (1\star_m z))\star_1 a=a$ for all $z\in M$. For such an $m$, and for every $z\in M$ we have:
$q^{\mathbf M}(a,z_\sq[1[0],\dots,1[m-1]])=q^{\mathbf M}(a,1[0],\dots,1[m-1],z[m],z[m+1],\dots)=(a\cdot (1\star_m z))\star_1 1 = 
(a\cdot (1\star_m z))\star_1 a=a$,
showing that $a$ is $\omega$-finite-dimensional in $\mathbf M^{\mathrm{pica}}$.
\end{proof}

Let  $\mathbb{ECM}$ be the full subcategory of $\mathbb{CM}$ whose objects are the extensional cm-monoids. 
The correspondence $(\_)^{\mathrm{pica}}$, mapping an extensional
cm-monoid  $\mathbf M$ into the  $\mathsf{PICA}$  $\mathbf M^{\mathrm{pica}}$ is the object part of a functor which is defined as follows on arrows. Let $f:\mathbf M\to \mathbf N$ be a homomorphism of cm-monoids. Then we define
$f^{\mathrm{pica}}=f_{|(M_{|1}) }: M_{|1}\to  N_{|1} $. It is easy to see that  $f^{\mathrm{pica}}$ is well defined
and that it is a homomorphism of $\mathsf{PICA}$s.

\begin{lemma}\label{lem:funct1}
  The correspondence $(\_)^{\mathrm{pica}}$ is a functor from
  $\mathbb{ECM}$ to $\mathbb{PICA}$, preserving $\omega$-finite dimensionality.
\end{lemma}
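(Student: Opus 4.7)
The plan is to verify the three items that the lemma asserts: well-definedness of $f^{\mathrm{pica}}$ on objects, that $f^{\mathrm{pica}}$ is a $\mathsf{PICA}$-morphism, the functor axioms, and preservation of $\omega$-finite dimensionality. The object part on $\omega$-finite dimensional $\mathbf M$ is already in the previous proposition, so the substantive work is on morphisms, and the key point is that every operation of a $\mathsf{PICA}$ is constructed from the cm-monoid operations, so preservation for $f^{\mathrm{pica}}$ will follow from preservation for $f$.

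First I would show that $f^{\mathrm{pica}}=f_{|M_{|1}}$ has codomain $N_{|1}$. If $x\in M_{|1}$, then $x\star_1 1^\mathbf M=x$; since $f$ is an m-monoid morphism it preserves $\star_1$ and $1$, hence $f(x)\star_1 1^\mathbf N=f(x)$, i.e., $f(x)\in N_{|1}$. Next I would check the three conditions for $f^{\mathrm{pica}}$ to be a $\mathsf{PICA}$-morphism. (a) Preservation of $\e_k$: by Definition \ref{def:picaofcl}, $\e_k^\mathbf M=1^\mathbf M[k]=\bar\tau^k_0(1^\mathbf M)_{\pref 1}$, and $f$ preserves $\bar\tau^k_0$, $\star_1$, and $1$, so $f(\e_k^\mathbf M)=\bar\tau^k_0(1^\mathbf N)_{\pref 1}=\e_k^\mathbf N$. (b) Preservation of the domain: given $s\in D^\mathbf M=M_{\sq}$, write $s=b_{\sq}$ for some $b\in M$; then for each $n$,
\[
f(b[n])=f(\bar\tau^n_0(b)_{\pref 1})=\bar\tau^n_0(f(b))_{\pref 1}=f(b)[n],
\]
so $(f^{\mathrm{pica}})^\omega(s)=f(b)_{\sq}\in N_{\sq}=D^\mathbf N$. (c) Preservation of $q$: for $a\in M_{|1}$ and $s=b_{\sq}\in D^\mathbf M$,
\[
f(q^\mathbf M(a,s))=f((a\cdot^\mathbf M b)_{\pref 1})=(f(a)\cdot^\mathbf N f(b))_{\pref 1}=q^\mathbf N(f(a),f(b)_{\sq})=q^\mathbf N(f^{\mathrm{pica}}(a),(f^{\mathrm{pica}})^\omega(s)),
\]
using that $f$ preserves $\cdot$ and $\star_1$.

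For the functor axioms, $(\mathrm{id}_\mathbf M)^{\mathrm{pica}}=\mathrm{id}_{M_{|1}}=\mathrm{id}_{\mathbf M^{\mathrm{pica}}}$ and $(g\circ f)^{\mathrm{pica}}=(g\circ f)_{|M_{|1}}=g_{|N_{|1}}\circ f_{|M_{|1}}=g^{\mathrm{pica}}\circ f^{\mathrm{pica}}$, using that $f(M_{|1})\subseteq N_{|1}$ by the first step. Finally, preservation of $\omega$-finite dimensionality on objects is exactly part (2) of the preceding proposition.

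I do not anticipate any real obstacle: the extensionality of $\mathbf M$ is needed only to ensure that $q^\mathbf M$ on $\mathbf M^{\mathrm{pica}}$ is well-defined (since distinct representatives $b, b'$ of the same sequence of coordinates would force $(a\cdot b)_{\pref 1}\neq (a\cdot b')_{\pref 1}$ in general), but this is already built into the target object. The only thing to watch is (b), where one must pick a representative $b$ of $s\in M_{\sq}$ in order to compute; extensionality guarantees uniqueness, and the identity $f(b[n])=f(b)[n]$ reduces everything to preservation of the generating operations of the merge reduct by $f$.
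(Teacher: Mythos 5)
Your proof is correct and follows exactly the route the paper intends: the paper itself only asserts that $f^{\mathrm{pica}}$ is well defined and a $\mathsf{PICA}$-homomorphism ("it is easy to see"), delegating the $\omega$-finite-dimensionality claim to part (2) of the preceding proposition, and your write-up simply supplies the routine verifications — $f(M_{|1})\subseteq N_{|1}$, preservation of $\e_k$, $D$, and $q$ via $f(b[n])=f(b)[n]$, and the functor laws. No gaps; the remark on where extensionality is actually used is accurate.
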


\begin{theorem}\label{prop:funct2}
The categories   $\mathbb{ECM}$ and $\mathbb{PICA}$  are equivalent, as well as the respective full subcategories of $\omega$-finite-dimensional structures.
\end{theorem}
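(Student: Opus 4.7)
The plan is to verify that the functors $(\_)^{\mathrm{ecm}}:\mathbb{PICA}\to\mathbb{ECM}$ and $(\_)^{\mathrm{pica}}:\mathbb{ECM}\to\mathbb{PICA}$ form a quasi-inverse pair, by constructing natural isomorphisms $\eta_{\mathbf M}:\mathbf M\to(\mathbf M^{\mathrm{pica}})^{\mathrm{ecm}}$ and $\epsilon_{\mathbf A}:(\mathbf A^{\mathrm{ecm}})^{\mathrm{pica}}\to\mathbf A$. The restriction to the $\omega$-finite dimensional subcategories will then follow automatically from Lemmas \ref{lem:funct} and \ref{lem:funct1}.

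For the first direction, fix an extensional cm-monoid $\mathbf M$. By Lemma \ref{lem:coto}(1) the map $x\mapsto x_\sq$ is already an isomorphism of $\mathbf M$ with its canonical image on the trace $M_\sq\subseteq(M_{|1})^\omega$. Unfolding Definition \ref{def:picaofcl}, the PICA $\mathbf M^{\mathrm{pica}}$ has universe $M_{|1}$ and distinguished set $D^{\mathbf M^{\mathrm{pica}}}=M_\sq$, so by (\ref{product-from-pica}) the cm-monoid $(\mathbf M^{\mathrm{pica}})^{\mathrm{ecm}}$ has universe $M_\sq$. I will therefore take $\eta_{\mathbf M}(x)=x_\sq$. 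Preservation of $\star_n$ and $\bar\sigma$ is immediate from Corollary \ref{lem:4.16}; preservation of the unit uses $1_\sq=(1[0],1[1],\ldots)=(\e_0^{\mathbf M^{\mathrm{pica}}},\e_1^{\mathbf M^{\mathrm{pica}}},\ldots)$; and preservation of multiplication reduces, coordinate by coordinate, to
$$(x\cdot y)_\sq[n]=(x\cdot y)[n]=(x[n]\cdot y)_{\pref 1}=q^{\mathbf M^{\mathrm{pica}}}(x[n],y_\sq)=(x_\sq\cdot^{\mathrm{ecm}} y_\sq)[n],$$
by Lemma \ref{lem:[k]bis}(2) and Definition \ref{def:picaofcl}. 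Bijectivity of $\eta_{\mathbf M}$ is Lemma \ref{lem:B12}(3) combined with extensionality.

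For the second direction, fix a PICA $\mathbf A$. The cm-monoid $\mathbf A^{\mathrm{ecm}}$ is canonical, with universe $D^{\mathbf A}\subseteq A^\omega$ and unit $1=(\e_i^{\mathbf A}:i\in\omega)$. An element $s\in D^{\mathbf A}$ has rank $\leq 1$ exactly when $s=s\star_1 1=(s_0,\e_1^{\mathbf A},\e_2^{\mathbf A},\ldots)$, and every such sequence belongs to $D^{\mathbf A}$ because $D^{\mathbf A}$ is a trace containing $1$ (axioms (P1), (P4)). Hence $(D^{\mathbf A})_{|1}$ is in bijection with $A$ via $s\mapsto s_0$, with inverse $a\mapsto\langle a\rangle:=(a,\e_1^{\mathbf A},\e_2^{\mathbf A},\ldots)$. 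I set $\epsilon_{\mathbf A}(s)=s_0$. The distinguished set $D^{(\mathbf A^{\mathrm{ecm}})^{\mathrm{pica}}}=(D^{\mathbf A})_\sq$ consists of sequences $s_\sq=(\langle s_0\rangle,\langle s_1\rangle,\ldots)$ for $s\in D^{\mathbf A}$, and $\epsilon_{\mathbf A}^\omega$ maps it bijectively onto $D^{\mathbf A}$. Preservation of the constants reads $\e_n^{(\mathbf A^{\mathrm{ecm}})^{\mathrm{pica}}}=1[n]=\bar\tau^n_0(1)_{\pref 1}=\langle\e_n^{\mathbf A}\rangle$, mapped by $\epsilon_{\mathbf A}$ to $\e_n^{\mathbf A}$; preservation of $q$ reduces to observing that the first coordinate of $q^{(\mathbf A^{\mathrm{ecm}})^{\mathrm{pica}}}(\langle a\rangle,s_\sq)=(\langle a\rangle\cdot^{\mathrm{ecm}}s)_{\pref 1}$ is $q^{\mathbf A}(a,s_0,s_1,\ldots)$ by (\ref{product-from-pica}).

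Naturality of $\eta$ and $\epsilon$ is a direct consequence of the componentwise definitions of $f^{\mathrm{pica}}$ and $f^{\mathrm{ecm}}$ on morphisms, given just before Lemmas \ref{lem:funct} and \ref{lem:funct1}: both act by ``restriction'' and therefore commute with the two bijections above. The main obstacle I expect is bookkeeping: the composite constructions $(\mathbf M^{\mathrm{pica}})^{\mathrm{ecm}}$ and $(\mathbf A^{\mathrm{ecm}})^{\mathrm{pica}}$ interleave a rank-$1$ slice with a canonical ``sequence-of-coordinates'' embedding, so one must carefully track which $\e_n$ and which $\cdot$ is used at each layer. Once the bijections $x\mapsto x_\sq$ and $a\mapsto\langle a\rangle$ are fixed, however, all verifications reduce to identities already established in the paper, notably Lemmas \ref{lem:[k]}, \ref{lem:[k]bis} and \ref{lem:B12}(3), and Corollary \ref{lem:4.16}.
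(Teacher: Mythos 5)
Your proof is correct and follows exactly the same route as the paper: the paper's own (very terse) argument defines precisely the same unit $\eta_{\mathbf M}(x)=x_\sq$ and counit $(a,\e_1^{\mathbf A},\ldots)\mapsto a$, and your write-up simply supplies the verification details (homomorphism properties via Corollary \ref{lem:4.16} and Lemmas \ref{lem:[k]bis}, the trace argument identifying $(D^{\mathbf A})_{|1}$ with $A$, and the restriction to $\omega$-finite dimensional structures via Lemmas \ref{lem:funct} and \ref{lem:funct1}) that the paper leaves implicit.
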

\begin{proof} In this proof we denote by $F$ the functor  $(\_)^{\mathsf{pica}}$
    and  by $G$ the functor  $(\_)^{\mathsf{cm}}$.
  We define two natural isomorphisms $\epsilon:F\circ G\to \mathrm{Id}_{\mathbb{PICA}} $  and 
  $\eta: \mathrm{Id}_{\mathbb{ECM}} \to G\circ F$. For every extensional cm-monoid
  $\mathbf M$, $\eta_{\mathbf M}:\mathbf M\to G(F(\mathbf M))$ is the isomorphism defined by $x\mapsto x_\sq$. For every $\mathsf{PICA}$ $\mathbf A$,
  $\epsilon_A: F(G(\mathbf A))\to \mathbf A$  is the isomorphism defined by
  $(a,\e^\mathbf A_1,\ldots,\e^\mathbf A_n,\ldots)\mapsto a$.
\end{proof}

\begin{remark}
We can  obtain Theorem \ref{equivalence-ca-cmfr} as a corollary of Theorem \ref{prop:funct2}, thus providing a link between the constructions of Sections 
\ref{sec:ca1} and \ref{sec:ca2}. We only sketch the arguments, leaving the details to the reader.  One proves that, modulo the identification of $\mathbb{CA}$ with the full subcategory of $\mathbb{PICA}$ spelled out in Lemma \ref{ca-as-pica}, (i) the functor $(\_)^{\mathrm{ecm}}$ restricted to $\mathbb{CA}$ ``is''  $(\_)^{\mathrm{cm}}$, and (ii) the restriction of $(\_)^{\mathrm{pica}}$ to finitely ranked  cm-monoids  ``is'' 
$(\_)^{\mathrm{ca}}$. The following properties are implicit in these two claims:
\begin{itemize}
\item Finitely-ranked cm-monoids are extensional by   Lemma \ref{lem:coto}(2).
\item An extensional cm-monoid $M$ is finitely ranked  if and only if $M_{\sq}$ is equal to the  trace $[1]_\equiv$. This claim is proved as follows. The inclusion $[1]_\equiv\subseteq M_{\sq}$ follows from Lemma \ref{lem:bar}  (without using the extensionality assumption). If $M$ is finitely ranked, then the inclusion $M_{\sq}\subseteq [1]_\equiv$ follows from Lemma \ref{lem:[k]}(ii). Conversely, if $M_{\sq}\subseteq [1]_\equiv$, then for any $x\in M$ we have $x[n]=1[n]$ for all sufficiently large $n$, from which one concludes $x=x_{<n}$ by extensionality.
\end{itemize}
\end{remark}

\bibliographystyle{plain}
\bibliography{main}

\end{document}